\newcommand\hl{\bgroup\markoverwith
	{\textcolor[rgb]{0.8,0.8,0.8}{\rule[-.5ex]{2pt}{2.5ex}}}\ULon}
\journal{Journal of \LaTeX\ Templates}
\newenvironment{newlist}[1]%
{\begin{list}{}{\settowidth{\labelwidth}{\bf #1}%
			\setlength{\leftmargin}{\labelwidth}%
			\addtolength{\leftmargin}{\labelsep}%
			}}%
{\end{list}}
\newtheorem{remark}{Remark}[section]
\newtheorem{example}{Example}[section]
\newtheorem{theorem}{Theorem}[section]
\newtheorem{proposition}{Proposition}[section]
\newtheorem{lemma}{Lemma}[section]
\newenvironment{proof}{{\noindent \it Proof.}}{\hfill $\square$\par}
\begin{document}

\begin{frontmatter}

\title{Memory gradient method for multiobjective optimization}

\author[mymainaddress]{Wang Chen}\corref{mycorrespondingauthor}
\cortext[mycorrespondingauthor]{\noindent Corresponding author.}
\ead{chenwangff@163.com}
\author[mysecondaryaddress]{Xinmin Yang}
\ead{xmyang@cqnu.edu.cn}
\author[mythirdaryaddress]{Yong Zhao}
\ead{zhaoyongty@126.com}
\address[mymainaddress]{College of Mathematics, Sichuan University, Chengdu 610065, China}
\address[mysecondaryaddress]{School of Mathematical Sciences, Chongqing Normal University, Chongqing 401331, China}
\address[mythirdaryaddress]{College of Mathematics and Statistics, Chongqing Jiaotong University, Chongqing 400047, China}

\begin{abstract}
In this paper, we propose a new descent method, termed as multiobjective memory gradient method, for finding Pareto critical points of a multiobjective optimization problem. The main thought in this method is to select a combination of the current descent direction and past multi-step iterative information as a new search direction and to obtain a stepsize by virtue of two types of strategies. It is proved that the developed direction with suitable parameters always satisfies the sufficient descent condition at each iteration. Based on mild assumptions, we obtain the global convergence and the rates of convergence for our method. Computational experiments are given to demonstrate the effectiveness of the proposed method.
\end{abstract}

\begin{keyword}
Multiobjective optimization \sep Memory gradient method \sep Descent direction \sep Pareto critical \sep Convergence analysis
\end{keyword}

\end{frontmatter}


\section{Introduction}
Many problems in space exploration, engineering design, finance, environment analysis, management and machine learning have several objectives to be minimized simultaneously \cite{T_a2004,M_s2004,Z_m2015,F_o2001,H_a2020,J_m2006}. This type of problems can be expressed as \emph{multiobjective optimization problems} (MOPs). The simultaneous minimization of multiple objectives differs from scalar optimization in that there is no unique solution to MOPs. An useful notion of optimality in multiobjective optimization is \emph{Pareto optimality}.

The development of solution schemes for solving MOPs has attracted a great amount of attention. Classical methods for solving MOPs include heuristic methods \cite{C_e2007,G_m2008} and scalarization methods \cite{M_n2000, E_a2008}. For the heuristic methods, there is no convergence theories and the empirical convergence is usually slow. The scalarization methods convert a given MOP into a parameterized scalar one. In general, the converted problem and the primal MOP enjoy same optimal solutions under certain conditions. Nevertheless, Fliege el al. \cite{F_n2009} pointed out that the parameters in scalarization method are not known in advance and the selection of parameters may result in unbounded scalar problems even if the original MOP has solutions. In order to cope with these limitations, the iterative methods for solving MOPs have been proposed by many researchers \cite{F_n2009,F_s2000,W_e2019,C_c2009,A_c2021,M_b2016,da_a2013,L_n2018,G_o2020,G_a2021,M_q2018,A_a2015,M_n2019}, which are deemed as extensions of scalar optimization methods. It should be pointed out that the iterative methods for solving MOPs have quite satisfactory convergence properties and are easy to implement. Extending the iterative methods in scalar optimization to multiobjective setting is currently a promising area of research.

The steepest descent method for solving MOPs was proposed by Fliege and Svaiter \cite{F_s2000}, which produces a sequence of iterates by the following update rule
\begin{equation}\label{sd_iter}
	x^{k+1}=x^{k}+\alpha_{k}d^{k},\quad k=0,1,2,\ldots,
\end{equation}
where $d^{k}=v(x^{k})\in\mathbb{R}^{n}$ is obtained by solving a auxiliary and non-parametric scalar subproblem at each iteration, which is called the descent direction, and $\alpha_{k}>0$ is the stepsize found by using a stepsize strategy. Fliege et al. \cite{F_c2019} pointed out that the rates of steepest descent method for smooth MOPs are consistent with that of scalar optimization, but numerical experiments presented in \cite{G_o2020,G_a2021} illustrate the unsatisfactory performance of the multiobjective steepest descent method in view of the computational efficiency on some test problems. The Newton method \cite{F_n2009} for MOPs employs the Hessian information of every objectives at each iteration and has faster convergence speed. However, the computational costs of the Hessian matrices are quite high-priced  for high dimensional problems, as reported in \cite{M_q2018}. We observe that the multiobjective steepest descent method \cite{F_s2000} only needs the current descent direction information explicitly to obtain the next iterative points, which undoubtedly leads to the waste of historical iterative information to a certain extent. As we all known, in scalar optimization, the nonlinear conjugate gradient methods utilizing past one-step information can accelerate the classical gradient method and avoid the computation of Hessian matrices \cite{S_op2006}. Borrowing the idea of nonlinear conjugate gradient methods in scalar optimization, Lucambio P\'{e}rez et al. \cite{L_n2018} proposed the multiobjective versions of such methods, which use the past one-step descent direction information to produce the next iterative point. It has the iterative form \eqref{sd_iter} with the search direction
\begin{equation}\label{cg_iter}
	d^{k}=\left\{
	\begin{aligned}
		&v(x^{k}), &\quad{\rm if}\quad k= 0,\\
		&v(x^{k})+\beta_{k}d^{k-1},& \quad{\rm if}\quad k\geq 1,
	\end{aligned}\right.
\end{equation}
where $\beta_{k}$ is a scalar algorithmic parameter. In \cite{L_n2018}, $\beta_{k}$ was considered as the extended versions of five classical parameters in scalar optimization. Moreover, the multiobjective extensions of Hager--Zhang and the Liu--Storey nonlinear conjugate gradient methods were respectively proposed in \cite{G_o2020} and \cite{G_a2021}. The global convergence of these methods in \cite{L_n2018,G_o2020,G_a2021} were analyzed under mild assumptions, but the convergence rates were not obtained. Numerical experiments provided in \cite{G_o2020,G_a2021} illustrate the multiobjective nonlinear conjugate gradient methods is superior than the multiobjective steepest descent method. This means that the use of past one-step iterative information improves the performance of algorithm in terms of computational efficiency to some great extent. Now, the question is whether we can design a new form of $d^{k}$ by considering the historical multi-step iterative information and further improve the algorithmic performance?

In scalar optimization, it is worth noting that there have been a number of meaningful researches which use the past multi-step iterative information to design algorithms. The momentum method introduced by Polyak \cite{P_s1964} can accelerate gradient method by combining the historical gradients information in the update rule at every iteration. It is widely utilized to train the parameters of neural network in machine learning \cite{S_o2013,L_a2020}. However, its convergence can not be guaranteed in general. Cragg and Levy \cite{C_s1969} introduced a method called supermemory gradient method to seek the minimum of a unconstrained optimization problem. Compared with the classical gradient method, their method memorizes the previous $k$-step iterations and has the advantage of high speed. Wolfe and Viazminsky \cite{W_s1976} studied a supermemory descent method that including the supermemory gradient method of Cragg and Levy \cite{C_s1969} as a special case. Nevertheless, the global convergence properties were not established in \cite{C_s1969,W_s1976}. Shi and Shen \cite{S_a2004,S_a2005} proposed a type of gradient-based algorithm whose basic idea is also to employ historical multi-step iterative direction information. Based on some suitable assumptions, the global convergence and the rate for convergence were obtained. Numerical experiments in \cite{S_a2004,S_a2005} reveal a fact that more information used in the current iterate may improve the algorithmic performance. Narushima and Yabe \cite{N_g2006} introduced a new memory gradient method that also uses historical direction information and then derived the global convergence of the method under appropriate conditions. Other methods that use historical iterative information at the current step to improve the algorithmic performance have been reported in 
\cite{G_a2011,G_a2014,O_a2017,Z_a2012}. In summary, it would be a good choice to design new algorithms based on historical iterative information in scalar optimization. So far as we know, there is no study on utilizing this strategy to design algorithm in multiobjective optimization.

Motivated by the works \cite{F_s2000, L_n2018,N_g2006}, the main goal of this paper is to introduce and analyze a new descent method called multiobjective memory gradient method for solving MOPs. In this approach, the direction $d^{k}$ is developed by the current descent direction and historical multi-step iterative information. We point out that the combined direction with appropriate parameters always meets the sufficient descent condition proposed in \cite{L_n2018}. The new method is considered with two classes of stepsize strategies for obtaining a stepsize along the search direction $d^{k}$. Based on several suitable assumptions, we derive the global convergence properties of the method. It is proved that the algorithm enjoys a convergence rate with the order of $1/\sqrt{k}$ to non-convex MOPs. We also give a new and reasonable assumption, and then establish the linear convergence rate under such assumption. As for the numerical experiments, we compare our method with the steepest descent method \cite{F_s2000} and the nonlinear conjugate gradient method \cite{L_n2018}  on a set of test instances. The numerical results illustrate our method's effectiveness, as will be presented in Section 6.

The remainder of this paper is as follows. Section 2 presents some notions, notation and preliminary results. In Section 3, we give the general scheme of memory gradient method for solving MOPs and provide the descent properties of the combined direction. In Section 4, we conduct the convergence analysis of the proposed method with two different stepsize strategies. In Section 5, we prove the convergence rates. Numerical experiments are provided in Section 6. Finally, in Section 7, we make some conclusions about our works.

\section{Preliminaries}
Throughtout this paper, for $m\in\mathbb{Z}_{+}$, where $\mathbb{Z}_{+}$ denotes the set of positive integers, we take $\langle m\rangle=\{1,2,\ldots,m\}$. Denote $e=(1,1,\ldots,1)^{\top}\in\mathbb{R}^{m}$. Let $0_{n}$ denote the zero vector of $\mathbb{R}^{n}$. $\langle \cdot,\cdot\rangle$ stands for the inner product in $\mathbb{R}^{n}$ and $\|\cdot\|$ denotes the norm, that is $\|x\|=\sqrt{\langle x,x\rangle}$ for $x\in\mathbb{R}^{n}$. The norm of a real matrix $A=(A_{i,j})_{m\times n}\in\mathbb{R}^{m\times n}$ is calculated as
\begin{equation}\label{matrix_norm}
	\|A\|=\max_{x\neq0}\frac{\|Ax\|_{\infty}}{\|x\|}=\max_{i\in\langle m\rangle}\|A_{i,\cdot}\|=\max_{i\in\langle m\rangle}
	\left(\sum_{j=1}^{n}A_{i,j}^{2}\right)^{1/2}.
\end{equation}

Let $\chi^{+}$ ($\chi\in\mathbb{R}$) be denoted as
\begin{equation*}
	\chi^{+}=\left\{
	\begin{aligned}
		&0, &\quad{\rm if}\quad \chi= 0,\\
		&\frac{1}{\chi},& \quad {\rm otherwise}.
	\end{aligned}\right.
\end{equation*}
Clearly, $\chi\chi^{+}\leq1$ and $\chi\chi^{+}=1$ only when $\chi\neq0$.

Denote $\mathbb{R}_{+}^{m}=\{\mu\in\mathbb{R}^{m}:\mu_{i}\geq0,i\in\langle m\rangle\}$ and $\mathbb{R}_{++}^{m}=\{\mu\in\mathbb{R}^{m}:\mu_{i}>0,i\in\langle m\rangle\}$. We define the partial order $\preceq$ induced by $\mathbb{R}_{+}^{m}$: for $\nu,\mu\in\mathbb{R}^{n}$, $\nu\preceq \mu$ if and only if (iff) $\mu-\nu\in\mathbb{R}_{+}^{m}$, which is equivalent to $\nu_{i}\leq \mu_{i}$ for each $i\in\langle m\rangle$. Likewise, we also introduce the stronger relation $\prec$ induced by $\mathbb{R}_{++}^{m}$: $\nu\prec \mu$ iff $\mu-\nu\in\mathbb{R}_{++}^{m}$, which is equivalent to $\nu_{i}< \mu_{i}$ for each $i\in\langle m\rangle$.  Sometimes we may use $\mu\succeq \nu$ instead of $\nu\preceq \mu$. Let $-\mathbb{R}_{++}^{m}$ be the negative of $\mathbb{R}_{++}^{m}$, that is, $-\mathbb{R}_{++}^{m}=\{-\mu:\mu\in\mathbb{R}_{++}^{m}\}$.

In this paper, we are concerned with the following MOP:
\begin{equation}\label{mop}
	\min_{x\in\mathbb{R}^{n}}\quad F(x)=(F_{1}(x),F_{2}(x),...,F_{m}(x))^{\top},\\
\end{equation}
where $F_{i}:\mathbb{R}^{n}\rightarrow\mathbb{R}$, $i\in\langle m\rangle$, are continuously differentiable and the superscript ``${\top}$'' denotes the transpose. A point $\bar{x}\in\mathbb{R}^{n}$ is said to be \emph{Pareto optimal} of problem \eqref{mop} if there exists no $x\in\mathbb{R}^{n}$ such that $F(x)\preceq F(\bar{x})$ and $F(x)\neq F(\bar{x})$ (see \cite{M_n2000}).

Given $x=(x_{1},x_{2},\ldots,x_{n})\in\mathbb{R}^{n}$, the Jacobian of $F$ at $x$ is defined by 
$$JF(x)=[\nabla F_{1}(x)\:\:\:\nabla F_{2}(x)\:\:\:\ldots\:\:\:\nabla F_{m}(x)]^{\top}.$$
The image of $JF(x)$ is denoted as
$${\rm Im}(JF(x))=\{JF(x)d=(\langle\nabla F_{1}(x),d\rangle,\langle\nabla F_{2}(x),d\rangle\ldots,\langle\nabla F_{m}(x),d\rangle)^{\top}:d\in\mathbb{R}^{n}\}.$$

A first-order necessary condition introduced in \cite{F_s2000} for Pareto optimality of a point $\bar{x}\in\mathbb{R}^{n}$ is
\begin{equation*}\label{fir_ord_opt}
	{\rm Im}(JF(\bar{x}))\cap(-\mathbb{R}^{m}_{++})=\emptyset.
\end{equation*}
A point $\bar{x}\in\mathbb{R}^{n}$ satisfying the above relation is said to be \emph{Pareto critical} (see \cite{F_s2000}).
Equivalently, for any $d\in\mathbb{R}^{n}$, there is $i^{*}\in\langle m\rangle$ such that $(JF(\bar{x})d)_{i^{*}}=\langle\nabla F_{i^{*}}(\bar{x}),d\rangle\geq0$,
which implies $\max_{i\in\langle m\rangle}\langle\nabla F_{i}(\bar{x}),d\rangle\geq0$ for any $d\in\mathbb{R}^{n}$. 
Clearly, if $x\in\mathbb{R}^{n}$ is not a Pareto critical point, then there is a vector $d\in\mathbb{R}^{n}$ satisfying $JF(x)d\in-\mathbb{R}^{m}_{++}$. We call the vector $d$ a \emph{descent direction} for $F$ at $x$.

Now, we define $\psi:\mathbb{R}^{n}\times\mathbb{R}^{n}\rightarrow \mathbb{R}$ as follows:
\begin{equation}\label{h}
	\psi(x,d)=\max_{i\in\langle m\rangle}\langle\nabla F_{i}(x),d\rangle.
\end{equation}
From the previous discussion, we known that $\psi$ can express Pareto critical and descent direction, i.e.,
\begin{itemize}\setlength{\itemsep}{-0.01in}
	\item $d\in\mathbb{R}^{n}$ is a descent direction for $F$ at $x\in\mathbb{R}^{n}$ iff $\psi(x,d)<0$,
	\item $x\in\mathbb{R}^{n}$ is Pareto critical iff $\psi(x,d)\geq0$ for any $d\in\mathbb{R}^{n}$. 
\end{itemize}

The following proposition illustrates several useful results related to $\psi$.

\begin{proposition}{\rm\cite{F_a2014}}\label{h_property}
	For all $x,y\in\mathbb{R}^{n}$, $\varrho> 0$ and $b_{1},b_{2}\in\mathbb{R}^{n}$, we obtain
	\begin{enumerate}[{\rm (i)}]\setlength{\itemsep}{-0.01in}
		\item $\psi(x,\varrho b_{1})=\varrho \psi(x,b_{1})$;
		\item $\psi(x,b_{1}+b_{2})\leq \psi(x,b_{1})+\psi(x,b_{2})$;
		\item $|\psi(x,b_{1})-\psi(y,b_{2})|\leq\|JF(x)b_{1}-JF(y)b_{2}\|$.
	\end{enumerate}
\end{proposition}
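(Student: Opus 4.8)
The plan is to treat the three items in turn, since (i) and (ii) are immediate consequences of elementary properties of the maximum taken over the finite index set $\langle m\rangle$, whereas (iii) carries the real content. For (i), I would factor the positive scalar out of each inner product: because $\langle\nabla F_i(x),\varrho b_1\rangle=\varrho\langle\nabla F_i(x),b_1\rangle$ and $\varrho>0$ does not disturb the ordering of the $m$ terms, the scalar $\varrho$ pulls out of the maximum and $\psi(x,\varrho b_1)=\varrho\,\psi(x,b_1)$ follows at once. For (ii), I would use linearity of the inner product in its second slot to write $\langle\nabla F_i(x),b_1+b_2\rangle=\langle\nabla F_i(x),b_1\rangle+\langle\nabla F_i(x),b_2\rangle$ and then invoke the general subadditivity of the maximum, $\max_i(a_i+c_i)\le\max_i a_i+\max_i c_i$, with $a_i=\langle\nabla F_i(x),b_1\rangle$ and $c_i=\langle\nabla F_i(x),b_2\rangle$.

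The substantive part is (iii). The key reformulation is that, by the definition of the Jacobian $JF(x)$, the $i$-th entry of $JF(x)b_1$ is exactly $\langle\nabla F_i(x),b_1\rangle$, so that $\psi(x,b_1)=\max_{i\in\langle m\rangle}(JF(x)b_1)_i$ and, likewise, $\psi(y,b_2)=\max_{i\in\langle m\rangle}(JF(y)b_2)_i$. Writing $u=JF(x)b_1$ and $v=JF(y)b_2$ in $\mathbb{R}^m$, the claim then reduces to the $1$-Lipschitz property of the maximum functional on $\mathbb{R}^m$, namely $\left|\max_i u_i-\max_i v_i\right|\le\|u-v\|$.

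To close this, I would run the standard two-sided estimate: using $u_i=v_i+(u_i-v_i)$ together with the subadditivity already invoked in (ii), one gets $\max_i u_i\le\max_i v_i+\max_i(u_i-v_i)$, and since $\max_i(u_i-v_i)\le\max_i|u_i-v_i|=\|u-v\|_\infty$, this yields $\max_i u_i-\max_i v_i\le\|u-v\|_\infty$; exchanging the roles of $u$ and $v$ gives the matching lower bound and hence $\left|\max_i u_i-\max_i v_i\right|\le\|u-v\|_\infty$. I expect the only point requiring care to be the norm bookkeeping: one must verify that this $\|\cdot\|_\infty$ estimate is consistent with the norm appearing in the statement and in \eqref{matrix_norm}, which is where I would be most careful in writing out the details.
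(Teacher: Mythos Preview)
The paper does not supply a proof of this proposition; it is quoted from \cite{F_a2014} and stated without argument. Your proposal is correct and complete in outline. Items (i) and (ii) are handled exactly as one would expect. For (iii), your reduction to the $1$-Lipschitz property of $u\mapsto\max_i u_i$ on $\mathbb{R}^m$ is the right idea, and the norm bookkeeping you flag resolves cleanly: your argument yields $|\max_i u_i-\max_i v_i|\le\|u-v\|_\infty$, and since $\|w\|_\infty\le\|w\|$ for the Euclidean norm on $\mathbb{R}^m$ (while the induced matrix norm in \eqref{matrix_norm} is built from $\|\cdot\|_\infty$ on the image anyway), the stated inequality $|\psi(x,b_1)-\psi(y,b_2)|\le\|JF(x)b_1-JF(y)b_2\|$ follows under either reading of the norm on $\mathbb{R}^m$.
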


Let us now consider the following scalar optimization problem:
\begin{equation}\label{sub_pro}
	\min_{d\in\mathbb{R}^{n}}\:\psi(x,d)+\frac{1}{2}\|d\|^{2}.
\end{equation}
Obviously, the objectives in \eqref{sub_pro} is proper, closed and strongly convex. Therefore, problem \eqref{sub_pro} admits a unique optimal solution. Denote the optimal solution of (\ref{sub_pro}) by $v(x)$, i.e.,
\begin{equation}\label{opt_sol}
	v(x)=\mathop{\rm argmin}_{d\in\mathbb{R}^{n}}\: \psi(x,d)+\frac{1}{2}\|d\|^{2},
\end{equation}
and let the optimal value of (\ref{sub_pro}) be defined as $\theta(x)$, i.e., 
\begin{equation}\label{opt_val}
	\theta(x)=\psi(x,v(x))+\frac{1}{2}\|v(x)\|^{2}.
\end{equation}
Observe that in scalar optimization (i.e., $m=1$), one has $\psi(x,d)= \langle\nabla F_{1}(x),d\rangle$, $v(x)=-\nabla F_{1}(x)$ and $\theta(x)=-\|\nabla F_{1}(x)\|^{2}/2$.

To obtain $v(x)$, one can consider the corresponding dual problem of \eqref{sub_pro} (see \cite{F_s2000}):

\begin{equation}\label{dual}
	\begin{aligned}
		\lambda(x)\in\mathop{\rm argmin}_{\lambda\in\mathbb{R}^{m}}&\quad \left\|\sum_{i=1}^{m}\lambda_{i}\nabla F_{i}(x)\right\|^{2}\\
		{\rm s.t.}&\quad \lambda\in\varLambda^{m},
	\end{aligned}
\end{equation}
where $\varLambda^{m}=\{\lambda\in\mathbb{R}^{m}:\sum_{i=1}^{m}\lambda_{i}=1,\lambda_{i}\geq0,\forall i\in\langle m\rangle\}$ stands for the simplex set. Then, $v(x)$ can also be represented as
\begin{equation}\label{dual_sol}
	v(x)=-\sum_{i=1}^{m}(\lambda(x))_{i}\nabla F_{i}(x).
\end{equation}

Let us now give a characterization of Pareto critical points of problem \eqref{mop}, which will be used in our subsequent analysis.

\begin{proposition}{\rm\cite{F_s2000}}\label{pa_sta_equ}
	Let $v(\cdot)$ and $\theta(\cdot)$ be as in \eqref{opt_sol} and \eqref{opt_val}, respectively. The following statements hold:
	\begin{enumerate}[{\rm (i)}]\setlength{\itemsep}{-0.01in}
		\item If $x$ is a Pareto critical point of problem \eqref{mop}, then $v(x)= 0$ and $\theta(x)=0$;
		\item If $x$ is not a Pareto critical point of problem \eqref{mop}, then $v(x)\neq 0$, $\theta(x)<0$ and $\psi(x,v(x))<-\|v(x)\|^{2}/2<0$;
		\item $v(\cdot)$ is continuous.
	\end{enumerate}
\end{proposition}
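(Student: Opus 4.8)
The plan is to treat the three parts in increasing order of difficulty, exploiting throughout one elementary observation: $d=0$ is feasible for subproblem \eqref{sub_pro} and yields objective value $\psi(x,0)+\tfrac12\|0\|^2=0$, so the optimal value satisfies $\theta(x)\le 0$ for every $x\in\mathbb{R}^n$. This single fact drives parts (i) and (ii).

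For part (i), I would assume $x$ is Pareto critical, so that $\psi(x,d)\ge 0$ for all $d$ by the equivalent characterization of Pareto criticality in terms of $\psi$. Then the objective obeys $\psi(x,d)+\tfrac12\|d\|^2\ge\tfrac12\|d\|^2\ge 0$, which together with the feasibility of $d=0$ forces the minimum to equal $0$. Since the objective is strongly convex its minimizer is unique, and because $d=0$ attains the value $0$ it must be that minimizer; hence $v(x)=0$ and $\theta(x)=0$. For part (ii), I would assume $x$ is not Pareto critical and pick a descent direction $d$ with $\psi(x,d)<0$. Using the positive homogeneity in Proposition \ref{h_property}(i), the objective along $td$ equals $t\psi(x,d)+\tfrac{t^2}{2}\|d\|^2$, whose linear term is negative, so this quantity is strictly negative for small $t>0$ and therefore $\theta(x)<0$. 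From $\theta(x)=\psi(x,v(x))+\tfrac12\|v(x)\|^2<0$ I would immediately deduce $v(x)\ne 0$ (otherwise $\theta(x)=0$), and rearranging gives $\psi(x,v(x))<-\tfrac12\|v(x)\|^2<0$, the claimed chain of inequalities.

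The main obstacle is part (iii), the continuity of $v(\cdot)$, which I would establish by a boundedness-plus-subsequence argument. First I would record that the objective $g(x,d):=\psi(x,d)+\tfrac12\|d\|^2$ is jointly continuous in $(x,d)$, since each map $(x,d)\mapsto\langle\nabla F_i(x),d\rangle$ is continuous (because $F_i\in C^1$) and $\psi$ is their finite maximum. Fixing $x$ and a sequence $x^k\to x$, I would bound $\{v(x^k)\}$ as follows. Choosing the index attaining the maximum in $\psi$ and applying Cauchy--Schwarz together with the matrix-norm identity \eqref{matrix_norm} gives $\psi(x^k,v(x^k))\ge -\|JF(x^k)\|\,\|v(x^k)\|$; combining this with $\theta(x^k)\le 0$ yields $-\|JF(x^k)\|\,\|v(x^k)\|+\tfrac12\|v(x^k)\|^2\le 0$, hence $\|v(x^k)\|\le 2\|JF(x^k)\|$. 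The right-hand side stays bounded because $JF$ is continuous at $x$, so $\{v(x^k)\}$ is bounded.

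To finish, I would take any convergent subsequence $v(x^{k_j})\to\bar v$ and pass to the limit, for each fixed $d$, in the optimality inequality $g(x^{k_j},v(x^{k_j}))\le g(x^{k_j},d)$; joint continuity of $g$ then gives $g(x,\bar v)\le g(x,d)$ for all $d$, so $\bar v$ minimizes $g(x,\cdot)$, and uniqueness of the minimizer forces $\bar v=v(x)$. Since every convergent subsequence of the bounded sequence $\{v(x^k)\}$ has the same limit $v(x)$, the full sequence converges to $v(x)$, proving continuity. The delicate point throughout is the uniform-in-$k$ boundedness of the minimizers, for which the coercivity supplied by the common $\tfrac12\|d\|^2$ term is essential; the rest is a routine limiting argument built on the uniqueness already guaranteed for \eqref{sub_pro}.
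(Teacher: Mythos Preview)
Your proof is correct. Note, however, that the paper does not supply its own proof of this proposition: it is quoted verbatim from Fliege and Svaiter \cite{F_s2000} and used as a preliminary tool, so there is no in-paper argument to compare against. That said, your line of reasoning matches the standard one in the literature and is internally consistent with the paper's later use of the same ideas---in particular, the bound $\|v(x^k)\|\le 2\|JF(x^k)\|$ that you derive for the continuity argument is exactly inequality \eqref{convergence_analysis0000}, obtained there by the same Cauchy--Schwarz step combined with Proposition~\ref{h_property}(iii).
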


\section{Multiobjective memory gradient method}

In what follows, we introduce the multiobjective memory gradient (MMG) algorithm which uses the previous iterative information for solving the problem \eqref{mop}.

\begin{newlist}{Step1: }\setlength{\itemsep}{-0.01in}
	\item[MMG algorithm.]
	\item[Step 0] Choose $x^{0}\in\mathbb{R}^{n}$, $\gamma_{0}>0$ and $N\in\mathbb{Z}_{+}$. Compute $v(x^{0})$ and initialize $k\leftarrow0$.
	\item[Step 1] If $v(x^{k})=0$, then STOP.
	\item[Step 2] Define
	\begin{equation}\label{dk}
		d^{k}=\left\{
		\begin{aligned}
			&\gamma_{k}v(x^{k}), &\quad{\rm if}\quad k= 0,\\
			&\gamma_{k}v(x^{k})+\sum_{j=1}^{N_{k}}\beta_{kj}d^{k-j},& \quad{\rm if}\quad k\geq 1,
		\end{aligned}\right.
	\end{equation}
	where $\gamma_{k}>0$ and $\beta_{kj}\in\mathbb{R}$ ($j\in\langle N_{k}\rangle$, $N_{k}=\min\{k, N\}$) are algorithmic parameters.
	\item[Step 3] Find a stepsize $\alpha_{k}>0$ by a stepsize strategy and set $x^{k+1}=x^{k}+\alpha_{k}d^{k}$.
	\item[Step 4] Compute $v(x^{k+1})$, do $k\leftarrow k+1$, and return to Step 1.
\end{newlist}

From \eqref{dk}, it follows that the previous direction information are merged into the current descent direction by virtue of the parameters $\beta_{kj}$ ($j\in\langle N_{k}\rangle$).  The selection for updating the parameters $\gamma_{k}$ and $\beta_{kj}$ ($j\in\langle N_{k}\rangle$) at Step 2 and the way for obtaining the stepsize $\alpha_{k}$ at Step 3 remain deliberately open. 
We emphasize that if $N=1$ and $\gamma_{k}=1$ for each $k$, then the MMG algorithm reduces to the framework of the multiobjective nonlinear conjugate gradient algorithm; see \cite[pp.2699--2700]{L_n2018}. If $N=1$, $\gamma_{k}=1$ for each $k$ and $\beta_{kj}=0$ for all $k\geq1$, then \eqref{dk} can be viewed as the iterative form of the multiobjective steepest descent method \cite{F_s2000}. 

To ensure the convergence of the proposed method, we present a reasonable choice of parameters in the sequel and consider two appropriate stepsize strategies in the next section. It is obvious from the algorithmic framework that the MMG algorithm can successfully terminate when a Pareto critical point of problem \eqref{mop} is obtained. Thus, we assume that $v(x^{k})\neq0$ for any $k\geq0$ in the subsequent analysis. This means that the MMG algorithm generates infinite sequences $\{x^{k}\}$ and $\{d^{k}\}$.

Define $\beta_{kj}$ ($j\in\langle N_{k}\rangle$) as follows
\begin{equation}\label{betakj}
	\beta_{kj}=-\frac{1}{N_{k}}\psi(x^{k},v(x^{k}))\phi_{kj}^{+},
\end{equation}
where $\phi_{kj}$ ($j\in\langle N_{k}\rangle$) are parameters satisfying the following relation:
\begin{equation}\label{psikj}
	\phi_{kj}>\max\left\{\frac{\psi(x^{k},d^{k-j})}{\gamma_{k}},0\right\}.
\end{equation}
Note that $\beta_{kj}>0$ since $\psi(x^{k},v(x^{k}))<0$ for any $k$.

The next property displays that $d^{k}$ is a descent direction when the related parameters satisfy \eqref{betakj} and \eqref{psikj}.

\begin{lemma}\label{sufficient_des}
	Let the direction $d^{k}$ be given in (\ref{dk}). Assume that $\beta_{kj}$ and $\phi_{kj}$ satisfy \eqref{betakj}
	and \eqref{psikj} for $k\geq1$ and $j\in\langle N_{k}\rangle$, respectively. Then, $d^{k}$ is a descent direction for all $k$.
\end{lemma}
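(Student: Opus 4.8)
The plan is to establish the strict inequality $\psi(x^{k},d^{k})<0$, since by the characterization stated just before Proposition~\ref{h_property} this is precisely what it means for $d^{k}$ to be a descent direction. I would treat the base case $k=0$ separately from the general case $k\geq1$. For $k=0$ the direction is simply $d^{0}=\gamma_{0}v(x^{0})$ with $\gamma_{0}>0$, so positive homogeneity (Proposition~\ref{h_property}(i)) gives $\psi(x^{0},d^{0})=\gamma_{0}\psi(x^{0},v(x^{0}))$; since $v(x^{0})\neq0$ (the algorithm has not terminated), Proposition~\ref{pa_sta_equ}(ii) yields $\psi(x^{0},v(x^{0}))<0$, and hence $\psi(x^{0},d^{0})<0$.

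For the main case $k\geq1$, I would apply subadditivity and positive homogeneity of $\psi$ (Proposition~\ref{h_property}(i)--(ii)), together with $\gamma_{k}>0$ and $\beta_{kj}>0$, to the combined direction \eqref{dk}, obtaining the bound
\begin{equation*}
\psi(x^{k},d^{k})\leq \gamma_{k}\psi(x^{k},v(x^{k}))+\sum_{j=1}^{N_{k}}\beta_{kj}\,\psi(x^{k},d^{k-j}).
\end{equation*}
Substituting the definition \eqref{betakj} of $\beta_{kj}$ and factoring out the common quantity $\psi(x^{k},v(x^{k}))$, the right-hand side rewrites as
\begin{equation*}
\psi(x^{k},v(x^{k}))\left[\gamma_{k}-\frac{1}{N_{k}}\sum_{j=1}^{N_{k}}\phi_{kj}^{+}\,\psi(x^{k},d^{k-j})\right].
\end{equation*}
Since $\psi(x^{k},v(x^{k}))<0$ by Proposition~\ref{pa_sta_equ}(ii), it remains only to show that the bracketed factor is strictly positive.

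This last point is the crux of the argument, and the only step I expect to require care. The key observation is the per-term bound $\phi_{kj}^{+}\psi(x^{k},d^{k-j})<\gamma_{k}$ for every $j\in\langle N_{k}\rangle$, which I would verify by cases using condition \eqref{psikj}. Indeed, \eqref{psikj} forces $\phi_{kj}>0$, so that $\phi_{kj}^{+}=1/\phi_{kj}>0$. If $\psi(x^{k},d^{k-j})\leq0$, then $\phi_{kj}^{+}\psi(x^{k},d^{k-j})\leq0<\gamma_{k}$; if instead $\psi(x^{k},d^{k-j})>0$, then \eqref{psikj} gives $\phi_{kj}>\psi(x^{k},d^{k-j})/\gamma_{k}$, which rearranges to $\phi_{kj}^{+}\psi(x^{k},d^{k-j})<\gamma_{k}$. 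Averaging these $N_{k}$ strict inequalities yields $\frac{1}{N_{k}}\sum_{j=1}^{N_{k}}\phi_{kj}^{+}\psi(x^{k},d^{k-j})<\gamma_{k}$, so the bracket is positive and therefore $\psi(x^{k},d^{k})<0$.

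I would emphasize that this reasoning is entirely term-by-term: the sign of each $\psi(x^{k},d^{k-j})$ is absorbed by the two cases above and by the $\chi^{+}$ construction, so the previous directions $d^{k-j}$ need not themselves be descent directions. Consequently no induction on $k$ is needed, and the conclusion $\psi(x^{k},d^{k})<0$ holds for all $k$, which completes the proof.
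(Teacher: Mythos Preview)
Your proof is correct and follows essentially the same approach as the paper: both arguments use positive homogeneity and subadditivity of $\psi$ to obtain the bound $\psi(x^{k},d^{k})\leq \gamma_{k}\psi(x^{k},v(x^{k}))+\sum_{j}\beta_{kj}\psi(x^{k},d^{k-j})$, then exploit \eqref{psikj} term-by-term to show the right-hand side is negative. The only cosmetic difference is that the paper handles the two cases at once via $\max\{\psi(x^{k},d^{k-j}),0\}$ and the inequality $\phi_{kj}^{+}\phi_{kj}\leq1$, whereas you factor out $\psi(x^{k},v(x^{k}))$ first and then do an explicit case split on the sign of $\psi(x^{k},d^{k-j})$; the substance is identical.
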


\begin{proof}
	Consider $k=0$. By Proposition \ref{h_property}(i) and Proposition \ref{pa_sta_equ}(ii), we have $$\psi(x^{0},d^{0})=\gamma_{0}\psi(x^{0},v(x^{0}))<-\frac{\gamma_{0}}{2}\|v(x^{0})\|^{2}<0.$$
	For $k\geq1$, according to \eqref{dk} and Proposition \ref{h_property}(i)--(ii), one has
	\begin{equation}\label{sufficient_des00}
		\psi(x^{k},d^{k})=\psi\left(x^{k}, \gamma_{k}v(x^{k})+\sum_{j=1}^{N_{k}}\beta_{kj}d^{k-j}\right)
		\leq \gamma_{k}\psi(x^{k},v(x^{k}))+\psi\left(x^{k},\sum_{j=1}^{N_{k}}\beta_{kj}d^{k-j}\right).
	\end{equation}
	Applying Proposition \ref{h_property}(i)--(ii) repeatedly on the right part of \eqref{sufficient_des00}, it follows that
	\begin{equation}\label{sufficient_des0}
		\psi(x^{k},d^{k})\leq \gamma_{k}\psi(x^{k},v(x^{k}))+\sum_{j=1}^{N_{k}}\beta_{kj}\psi(x^{k},d^{k-j}).
	\end{equation}
	Consider the last term of \eqref{sufficient_des0}. By the definition of $\beta_{kj}$ ($j\in\langle N_{k}\rangle$), we have
	\begin{equation}\label{sufficient_des1}
		\begin{aligned}
			\beta_{kj}\psi(x^{k},d^{k-j})&\leq\beta_{kj}\max\{\psi(x^{k},d^{k-j}),0\}\\
			&=-\frac{1}{N_{k}}\psi(x^{k},v(x^{k}))\phi_{kj}^{+}\max\{\psi(x^{k},d^{k-j}),0\}\\
			&<-\frac{1}{N_{k}}\gamma_{k}\psi(x^{k},v(x^{k}))\phi_{kj}^{+}\phi_{kj}\\
			&\leq-\frac{1}{N_{k}}\gamma_{k}\psi(x^{k},v(x^{k})).
		\end{aligned}
	\end{equation}
	By \eqref{sufficient_des0} and \eqref{sufficient_des1}, we obtain $\psi(x^{k},d^{k})<0$.
\end{proof}

For the subsequent convergence analysis to MMG algorithm, we will need the more stringent condition
\begin{equation}\label{suf_des_con}
	\psi(x^{k},d^{k})\leq \sigma \psi(x^{k},v(x^{k})),
\end{equation}
for some $\sigma>0$ and any $k\geq0$. Notice that in scalar optimization, the condition \eqref{suf_des_con} becomes
\begin{equation*}
	\langle \nabla F_{1}(x^{k}),d^{k}\rangle\leq -\sigma \|\nabla F_{1}(x^{k})\|^{2},
\end{equation*}
which is the well-known \emph{sufficient descent condition}. Likewise, in multiobjective optimization, we say that a direction $d^{k}\in\mathbb{R}^{n}$ meets the sufficient descent condition at $x^{k}$ iff \eqref{suf_des_con} holds. It is worth mentioning that the general concept for sufficient descent condition was first proposed by Lucambio P${\rm\acute{e}}$rez and Prudente \cite{L_n2018} in vector optimization. The general notion was also applied in \cite{L_n2018, G_o2020, G_a2021} to discuss the convergence of nonlinear conjugate gradient methods for vector optimization. 

In the next lemma, we present the sufficient descent property on $d^{k}$ under stronger conditions on $\gamma_{k}$ and $\phi_{kj}$ ($j\in\langle N_{k}\rangle$).

\begin{lemma}\label{sufficient_descent}
	Let the direction $d^{k}$ be given in (\ref{dk}). Suppose that there is a positive constant $\gamma^{*}$ such that $\gamma_{k}\geq\gamma^{*}$, $\beta_{kj}$ satisfies \eqref{psikj} and $\phi_{kj}$ has the following property:
	\begin{equation}\label{sufficient_descent0001}
		\phi_{kj}>\frac{\psi(x^{k},d^{k-j})+\|JF(x^{k})\|\|d^{k-j}\|}{\gamma_{k}}.
	\end{equation}
	Then, $d^{k}$ satisfies the sufficient descent condition \eqref{suf_des_con} with $\sigma=\gamma^{*}/2>0$ for any $k$.
\end{lemma}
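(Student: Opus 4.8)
The plan is to mirror the argument of Lemma \ref{sufficient_des}, but to extract an extra factor of $1/2$ from the sharper hypothesis \eqref{sufficient_descent0001}. The case $k=0$ is immediate: by Proposition \ref{h_property}(i) we have $\psi(x^{0},d^{0})=\gamma_{0}\psi(x^{0},v(x^{0}))$, and since $\psi(x^{0},v(x^{0}))<0$ by Proposition \ref{pa_sta_equ}(ii) and $\gamma_{0}\geq\gamma^{*}>\gamma^{*}/2$, this quantity is at most $(\gamma^{*}/2)\psi(x^{0},v(x^{0}))$, which is \eqref{suf_des_con}. For $k\geq1$ I would start from the inequality \eqref{sufficient_des0} already established in the proof of Lemma \ref{sufficient_des}, namely $\psi(x^{k},d^{k})\leq\gamma_{k}\psi(x^{k},v(x^{k}))+\sum_{j=1}^{N_{k}}\beta_{kj}\psi(x^{k},d^{k-j})$, and then bound the sum more tightly than before.

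The crucial observation concerns the numerator appearing in \eqref{sufficient_descent0001}. Applying Proposition \ref{h_property}(iii) with $y=x^{k}$, $b_{1}=d^{k-j}$, $b_{2}=0_{n}$, together with $\psi(x^{k},0_{n})=0$ and the matrix-norm bound \eqref{matrix_norm}, yields $|\psi(x^{k},d^{k-j})|\leq\|JF(x^{k})\|\,\|d^{k-j}\|$. Hence, using the elementary identity $a+|a|=2\max\{a,0\}$, one gets $\psi(x^{k},d^{k-j})+\|JF(x^{k})\|\,\|d^{k-j}\|\geq\psi(x^{k},d^{k-j})+|\psi(x^{k},d^{k-j})|=2\max\{\psi(x^{k},d^{k-j}),0\}$. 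Combining this with \eqref{sufficient_descent0001} and $\gamma_{k}>0$ gives $\gamma_{k}\phi_{kj}>2\max\{\psi(x^{k},d^{k-j}),0\}$, that is, $\max\{\psi(x^{k},d^{k-j}),0\}/\phi_{kj}<\gamma_{k}/2$. This is exactly the factor-of-two improvement over the bound $\max\{\psi(x^{k},d^{k-j}),0\}/\phi_{kj}<\gamma_{k}$ that is implicit in \eqref{sufficient_des1}.

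With this in hand the remainder is bookkeeping that parallels \eqref{sufficient_des1}. Since \eqref{sufficient_descent0001} has a nonnegative right-hand side it forces $\phi_{kj}>0$, so $\phi_{kj}^{+}=1/\phi_{kj}$. I would then write $\beta_{kj}\psi(x^{k},d^{k-j})\leq\beta_{kj}\max\{\psi(x^{k},d^{k-j}),0\}=\frac{1}{N_{k}}(-\psi(x^{k},v(x^{k})))\,\phi_{kj}^{+}\max\{\psi(x^{k},d^{k-j}),0\}$, and bound this by $-\frac{\gamma_{k}}{2N_{k}}\psi(x^{k},v(x^{k}))$ using the previous paragraph and $-\psi(x^{k},v(x^{k}))>0$. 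Summing the $N_{k}$ terms gives $\sum_{j=1}^{N_{k}}\beta_{kj}\psi(x^{k},d^{k-j})\leq-\frac{\gamma_{k}}{2}\psi(x^{k},v(x^{k}))$, whence $\psi(x^{k},d^{k})\leq\gamma_{k}\psi(x^{k},v(x^{k}))-\frac{\gamma_{k}}{2}\psi(x^{k},v(x^{k}))=\frac{\gamma_{k}}{2}\psi(x^{k},v(x^{k}))\leq\frac{\gamma^{*}}{2}\psi(x^{k},v(x^{k}))$, the last step using $\gamma_{k}\geq\gamma^{*}$ and $\psi(x^{k},v(x^{k}))<0$. This is precisely \eqref{suf_des_con} with $\sigma=\gamma^{*}/2$.

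I expect the only genuinely new point, and hence the main obstacle, to be recognizing that the extra term $\|JF(x^{k})\|\,\|d^{k-j}\|$ in \eqref{sufficient_descent0001} is exactly what upgrades $\max\{\psi(x^{k},d^{k-j}),0\}$ to $2\max\{\psi(x^{k},d^{k-j}),0\}$ through Proposition \ref{h_property}(iii); once this doubling is spotted, the chain of inequalities from Lemma \ref{sufficient_des} carries over almost verbatim and produces the sharper constant $\gamma^{*}/2$ in place of a mere descent conclusion.
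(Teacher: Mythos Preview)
Your proof is correct and follows essentially the same route as the paper's: both start from \eqref{sufficient_des0}, invoke Proposition~\ref{h_property}(iii) to get $|\psi(x^{k},d^{k-j})|\leq\|JF(x^{k})\|\,\|d^{k-j}\|$, and combine this with \eqref{sufficient_descent0001} to extract the factor $1/2$. Your use of the identity $a+|a|=2\max\{a,0\}$ handles all indices $j$ uniformly and thereby avoids the paper's case split on the index set $\mathcal{J}=\{j:\psi(x^{k},d^{k-j})>0\}$, but the underlying estimate and the resulting constant $\sigma=\gamma^{*}/2$ are identical.
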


\begin{proof}
	Consider $k=0$. By Proposition \ref{h_property}(i) and $\psi(x^{0},v(x^{0}))<0$, one has
	\begin{equation*}
		\psi(x^{0},d^{0})=\gamma_{0}\psi(x^{0},v(x^{0}))\leq \gamma^{*}\psi(x^{0},v(x^{0}))\leq\frac{\gamma^{*}}{2}\psi(x^{0},v(x^{0})).
	\end{equation*}
	For $k\geq1$, from \eqref{sufficient_des0}, we have
	\begin{equation}\label{sufficient_descent0}
		\psi(x^{k},d^{k})\leq \gamma_{k}\psi(x^{k},v(x^{k}))+\sum_{j=1}^{N_{k}}\beta_{kj}\psi(x^{k},d^{k-j}).
	\end{equation}
	It follows from $\psi(x^{k},0_{n})=0$ and Proposition \ref{h_property}(iii) that
	\begin{equation}\label{sufficient_descent000}
		|\psi(x^{k},d^{k-j})|=|\psi(x^{k},d^{k-j})-\psi(x^{k},0_{n})|\leq\|JF(x^{k})d^{k-j}-0_{m}\|\leq\|JF(x^{k})\|\|d^{k-j}\|.
	\end{equation}
	Therefore, we conclude that $\phi_{kj}>0$ and $\beta_{kj}>0$. Now we define the index set $\mathcal{J}=\{j\in\langle N_{k}\rangle:\psi(x^{k},d^{k-j})>0\}$. Clearly, $|\mathcal{J}|\in\{0,1,\ldots,N_{k}\}$. We next consider two cases.
	
	\emph{Case 1}. If $|\mathcal{J}|=0$, then \eqref{sufficient_descent0} implies
	\begin{equation}\label{sufficient_descent01}
		\psi(x^{k},d^{k})\leq \gamma_{k}\psi(x^{k},v(x^{k}))\leq\gamma^{*}\psi(x^{k},v(x^{k}))\leq\frac{\gamma^{*}}{2}\psi(x^{k},v(x^{k})),
	\end{equation}
	because $\psi(x^{k},v(x^{k}))<0$. 
	
	\emph{Case 2}. If $|\mathcal{J}|\neq0$, then for the last term of \eqref{sufficient_descent0}, we have
	\begin{equation}\label{sufficient_descent1}
		\begin{aligned}
			\sum_{j=1}^{N_{k}}\beta_{kj}\psi(x^{k},d^{k-j})
			&\leq\sum_{j\in\mathcal{J}}\beta_{kj}\psi(x^{k},d^{k-j})\\
			&=\sum_{j\in\mathcal{J}}-\frac{1}{N_{k}}\psi(x^{k},v(x^{k}))\phi_{kj}^{+}\psi(x^{k},d^{k-j})\\
			&=\sum_{j\in\mathcal{J}}\frac{1}{N_{k}}\frac{-\psi(x^{k},v(x^{k}))\psi(x^{k},d^{k-j})}{\phi_{kj}}\\
			&\leq\sum_{j\in\mathcal{J}}\frac{1}{N_{k}}\frac{-\gamma_{k}\psi(x^{k},v(x^{k}))\psi(x^{k},d^{k-j})}{\psi(x^{k},d^{k-j})+\|JF(x^{k})\|\|d^{k-j}\|}\\
			&\leq-\frac{1}{N_{k}}\gamma_{k}\psi(x^{k},v(x^{k}))\sum_{j\in\mathcal{J}} \frac{\psi(x^{k},d^{k-j})}{2\psi(x^{k},d^{k-j})}\\
			&\leq-\frac{|\mathcal{J}| }{2N_{k}}\gamma_{k}\psi(x^{k},v(x^{k})),
		\end{aligned}
	\end{equation}
	where the first equality follows from the definition of $\beta_{kj}$ and the penultimate inequality follows from \eqref{sufficient_descent000}. By \eqref{sufficient_descent0} and \eqref{sufficient_descent1}, we obtain
	$$\psi(x^{k},d^{k})\leq \gamma_{k}\left(1-\frac{|\mathcal{J}|}{2N_{k}}\right)\psi(x^{k},v(x^{k}))\leq \frac{\gamma_{k}}{2}\psi(x^{k},v(x^{k}))\leq \frac{\gamma^{*}}{2}\psi(x^{k},v(x^{k})).$$
	Hence, let $\sigma=\gamma^{*}/2$ and the proof is complete.
\end{proof}

\section{Convergence analysis}

This section is devoted to the global convergence analysis of MMG algorithm with two different stepsize strategies. Let the following assumptions are satisfied.
\begin{description}
	\item[(A1)] $F$ is bounded below on the set $\mathcal{L}=\{x\in\mathbb{R}^{n}:F(x)\preceq F(x^{0})\}$, where $x^{0}\in\mathbb{R}^{n}$ is a given starting point.
	\item[(A2)] The Jacobian $JF$ is Lipschitz continuous with $L>0$ on an open set $\mathcal{B}$ containing $\mathcal{L}$, i.e., $\|JF(x)- JF(y)\|\leq L\|x-y\|$ for all $x,y\in\mathcal{B}$.
\end{description}

We are now ready to describe the stepsize strategies. 

\noindent\textbf{Stepsize-I strategy.} Let $\rho\in(0,1)$ and $\delta\in(0,1)$. Set $\tau_{k}=-\psi(x^{k},d^{k})/\|d^{k}\|^{2}$ and choose $\alpha_{k}=\max\{\tau_{k},\delta\tau_{k},\delta^{2}\tau_{k},\ldots\}$ satisfying
\begin{equation}\label{armijio}
	F(x^{k}+\alpha_{k} d^{k})\preceq F(x^{k})+\rho\alpha_{k} \psi(x^{k},d^{k})e.
\end{equation}

\noindent\textbf{Stepsize-II strategy.} Assume that $F$ satisfies (A2). Define the stepsize as 
\begin{equation}\label{adaptive}
	\alpha_{k}=\frac{-\psi(x^{k},d^{k})}{2L\|d^{k}\|^{2}}
\end{equation}
Obviously, the stepsize-I strategy is an Armijio-type line search and $\alpha_{k}$ is obtained by a simple backtracking procedure. The stepsize-II strategy depends on the Lipschitz constant $L$. If $L$ is not easily evaluated, then \eqref{adaptive} will not be calculated. This means that the stepsize rule \eqref{adaptive} is merely theoretical. Nevertheless, we notice that stepsize involving Lipschitz constant has been considered in the multiobjective setting (see \cite{G_a2021, A_c2021}).

For convenience, the MMG algorithm equipped with the stepsize-I strategy and the parameters satisfying the conditions of Lemma \ref{sufficient_descent} is identified as MMG-I. We also refer by MMG-II to the MMG algorithm equipped with the stepsize-II strategy and the parameters satisfying the conditions of Lemma \ref{sufficient_descent}.

The following two lemmas display the decrease property for the function value of iterate points generated by MMG-I and MMG-II, respectively.
\begin{lemma}\label{armijio_dk_bounds}
	Assume that the sequence $\{(x^{k},d^{k})\}$ is produced by MMG-I and that (A2) holds. Then, there is a positive constant $\omega$ such that
	\begin{equation}\label{armijio_dk_bounds0}
		F(x^{k})-F(x^{k+1})\succeq \omega\frac{\psi^{2}(x^{k},d^{k})}{\|d^{k}\|^{2}}e,\quad \forall k.
	\end{equation}
\end{lemma}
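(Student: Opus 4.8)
The plan is to extract a lower bound on the Armijo stepsize $\alpha_k$ and then feed it into the Armijo acceptance condition \eqref{armijio}. First I would recall that the stepsize is produced by backtracking from the trial value $\tau_k = -\psi(x^k,d^k)/\|d^k\|^2$, which is positive because $d^k$ is a descent direction by Lemma \ref{sufficient_descent}. The backtracking accepts the largest $\alpha_k$ of the form $\delta^\ell \tau_k$ satisfying \eqref{armijio}, so by construction $\alpha_k \le \tau_k$; more importantly, I need a \emph{lower} bound on $\alpha_k$, which I would obtain by showing that once $\alpha$ is small enough the Armijo condition is automatically satisfied, so the backtracking cannot overshoot downward by more than a factor $\delta$.

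The key estimate uses the Lipschitz continuity of $JF$ from (A2). For each component $F_i$, a standard descent-lemma argument gives, along the segment $[x^k, x^k+\alpha d^k]$,
\begin{equation*}
	F_i(x^k+\alpha d^k) \le F_i(x^k) + \alpha\langle\nabla F_i(x^k),d^k\rangle + \frac{L}{2}\alpha^2\|d^k\|^2
	\le F_i(x^k) + \alpha\,\psi(x^k,d^k) + \frac{L}{2}\alpha^2\|d^k\|^2,
\end{equation*}
where the second inequality uses $\langle\nabla F_i(x^k),d^k\rangle\le\psi(x^k,d^k)$ from the definition \eqref{h} of $\psi$. Comparing this with the Armijo requirement $F_i(x^k+\alpha d^k)\le F_i(x^k)+\rho\alpha\,\psi(x^k,d^k)$, I see that \eqref{armijio} holds for every $i$ as soon as $\frac{L}{2}\alpha\|d^k\|^2 \le (1-\rho)(-\psi(x^k,d^k))$, i.e. whenever $\alpha \le \frac{2(1-\rho)(-\psi(x^k,d^k))}{L\|d^k\|^2} = 2(1-\rho)\tau_k$. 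Consequently the backtracking terminates before $\alpha$ drops below $\delta$ times this threshold, giving the lower bound $\alpha_k \ge \min\{\tau_k,\ 2\delta(1-\rho)\tau_k\}$, hence $\alpha_k\ge c\,\tau_k$ for a constant $c=\min\{1,2\delta(1-\rho)\}>0$ depending only on $\rho,\delta,L$.

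Finally I would substitute this bound into \eqref{armijio}. Since $\psi(x^k,d^k)<0$, the acceptance condition gives componentwise $F_i(x^k)-F_i(x^{k+1}) \ge -\rho\alpha_k\psi(x^k,d^k) = \rho\alpha_k(-\psi(x^k,d^k))$, and using $\alpha_k\ge c\tau_k = c(-\psi(x^k,d^k))/\|d^k\|^2$ yields
\begin{equation*}
	F_i(x^k)-F_i(x^{k+1}) \ge \rho c\,\frac{\psi^2(x^k,d^k)}{\|d^k\|^2}
\end{equation*}
for every $i\in\langle m\rangle$, which is exactly \eqref{armijio_dk_bounds0} with $\omega=\rho c$. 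The main obstacle is the lower bound on $\alpha_k$: one must argue carefully that the backtracking scheme cannot terminate at a value more than a factor $\delta$ below the guaranteed-acceptance threshold, so that a uniform constant $c$ independent of $k$ emerges; the rest is a direct componentwise reading of the Armijo inequality. I would also note at the outset that all iterates stay in $\mathcal{L}$ (so (A2) applies on the segment), since \eqref{armijio} forces $F(x^{k+1})\preceq F(x^k)\preceq F(x^0)$ by induction.
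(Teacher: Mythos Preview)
Your approach is correct and essentially the paper's: both extract a lower bound on $\alpha_k$ from the Lipschitz hypothesis (the paper applies the mean value theorem to the rejected trial step $\alpha_k/\delta$, you use the descent-lemma quadratic upper bound) and then substitute into the Armijo inequality, the paper arriving at $\omega=\min\{\rho,\,\rho\delta(1-\rho)/L\}$. One slip to fix: your acceptance threshold $\frac{2(1-\rho)(-\psi(x^k,d^k))}{L\|d^k\|^2}$ equals $\frac{2(1-\rho)}{L}\tau_k$, not $2(1-\rho)\tau_k$, so the factor $1/L$ should appear in your constant $c$ (as you in fact hint when you say $c$ depends on $L$).
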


\begin{proof}
	For the stepsize-I strategy, we have the following two cases.
	
	\emph{Case 1.} Let $\mathcal{K}_{1}=\{k:\alpha_{k}=\tau_{k}\}$. By \eqref{armijio}, for every $i\in\langle m\rangle$ and all $k\in\mathcal{K}_{1}$, we have
	\begin{equation}\label{armijio_dk_bounds2}
		F_{i}(x^{k})-F_{i}(x^{k+1})\geq-\rho\tau_{k} \psi(x^{k},d^{k})=\rho \frac{\psi^{2}(x^{k},d^{k})}{\|d^{k}\|^{2}}.
	\end{equation}
	
	\emph{Case 2.} Let $\mathcal{K}_{2}=\{k:\alpha_{k}<\tau_{k}\}$. Obviously, $\alpha_{k}/\delta\leq\tau_{k}$ for $k\in\mathcal{K}_{2}$. Let $t=\alpha_{k}/\delta$. By the way $\alpha_{k}$ is chosen in the stepsize-I strategy, it follows that $t$ fails to
	satisfy \eqref{armijio}, i.e., for any $k\in\mathcal{K}_{2}$,
	\begin{equation*}
		F(x^{k}+t d^{k})\npreceq F(x^{k})+\rho t \psi(x^{k},d^{k})e,
	\end{equation*}
	which means that
	\begin{equation}\label{armijio_dk_bounds3}
		F_{i_{k}}(x^{k}+t d^{k})- F_{i_{k}}(x^{k})>\rho t \psi(x^{k},d^{k})
	\end{equation}
	for at least one index $i_{k}\in\langle m\rangle$. Applying the mean value theorem on the left part of \eqref{armijio_dk_bounds3}, there is $\nu_{k}\in[0,1]$ such that
		$\langle\nabla F_{i_{k}}(x^{k}+t\nu_{k}d^{k}),d^{k}\rangle>\rho \psi(x^{k},d^{k})$ for any $k\in\mathcal{K}_{2}$.
	By the definition of $\psi(\cdot,\cdot)$, we get $\psi(x^{k},d^{k})\geq \langle\nabla F_{i_{k}}(x^{k}),d^{k}\rangle$. Combining this with the above inequality, for all $k\in\mathcal{K}_{2}$, we have
	\begin{equation}\label{armijio_dk_bounds5}
		\begin{aligned}
			(\rho-1)\psi(x^{k},d^{k})&<\langle\nabla F_{i_{k}}(x^{k}+t\nu_{k}d^{k}),d^{k}\rangle-\psi(x^{k},d^{k})\\
			&\leq\langle\nabla F_{i_{k}}(x^{k}+t\nu_{k}d^{k}),d^{k}\rangle-\langle\nabla F_{i_{k}}(x^{k}),d^{k}\rangle.
		\end{aligned}
	\end{equation}
	By Cauchy-Schwarz inequality, (A2) and the fact that $\nu_{k}\in[0,1]$, for every $k\in\mathcal{K}_{2}$, we get
	\begin{equation}\label{armijio_dk_bounds6}
		\begin{aligned}
			\langle\nabla F_{i_{k}}(x^{k}+t\nu_{k}d^{k}),d^{k}\rangle-\langle\nabla F_{i_{k}}(x^{k}),d^{k}\rangle&\leq\|\nabla F_{i_{k}}(x^{k}+t\nu_{k}d^{k})-\nabla F_{i_{k}}(x^{k})\|\|d^{k}\|\\
			&\leq L t\nu_{k}\|d^{k}\|^{2}\\
			&\leq L t\|d^{k}\|^{2}.
		\end{aligned}
	\end{equation}
	Therefore, by \eqref{armijio_dk_bounds5} and \eqref{armijio_dk_bounds6}, for each $k\in\mathcal{K}_{2}$, we immediately have 
	\begin{equation}\label{armijio_dk_bounds7}
		\alpha_{k}=t\delta\geq \frac{\delta(\rho-1)}{L} \frac{\psi(x^{k},d^{k})}{\|d^{k}\|^{2}}.
	\end{equation} 
	By \eqref{armijio} and \eqref{armijio_dk_bounds7}, and noting that $\psi(x^{k},d^{k})<0$, one has
	\begin{equation}\label{armijio_dk_bounds8}
		F_{i}(x^{k})-F_{i}(x^{k+1})\geq-\rho\alpha_{k}\psi(x^{k},d^{k})\geq\frac{\rho\delta(1-\rho)}{L} \frac{\psi^{2}(x^{k},d^{k})}{\|d^{k}\|^{2}}
	\end{equation}
	for each $i\in\langle m\rangle$ and all $k\in\mathcal{K}_{2}$.
	
	Consequently, if we set 
	\begin{equation}\label{ww}
		\omega=\min\left\{\rho,\frac{\rho\delta (1-\rho)}{L}\right\},
	\end{equation}
	then the desired result \eqref{armijio_dk_bounds0} is satisfied. 
\end{proof}

\begin{lemma}\label{armijio_dk_bounds11}
	Assume that the sequence $\{(x^{k},d^{k})\}$ is produced by MMG-II. Then
	\begin{equation}\label{armijio_dk_bounds110}
		F(x^{k})-F(x^{k+1})\succeq \frac{1}{4L}\frac{\psi^{2}(x^{k},d^{k})}{\|d^{k}\|^{2}}e,\quad \forall k.
	\end{equation}
\end{lemma}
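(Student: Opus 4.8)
The plan is to apply the standard descent lemma componentwise and then substitute the explicit stepsize \eqref{adaptive}. First I would observe that (A2) together with the matrix-norm convention \eqref{matrix_norm} forces each gradient to be $L$-Lipschitz on $\mathcal{B}$: for every $i\in\langle m\rangle$ one has $\|\nabla F_{i}(x)-\nabla F_{i}(y)\|\leq\|JF(x)-JF(y)\|\leq L\|x-y\|$, since the matrix norm is the maximum of the row norms. Consequently each $F_{i}$ obeys the familiar quadratic upper bound
\[
F_{i}(y)\leq F_{i}(x)+\langle\nabla F_{i}(x),y-x\rangle+\frac{L}{2}\|y-x\|^{2},
\]
which I would invoke with $x=x^{k}$ and $y=x^{k+1}=x^{k}+\alpha_{k}d^{k}$.

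Next, because $\langle\nabla F_{i}(x^{k}),d^{k}\rangle\leq\psi(x^{k},d^{k})$ by the definition \eqref{h} of $\psi$, and because $d^{k}$ is a descent direction (so $\psi(x^{k},d^{k})<0$ and hence $\alpha_{k}>0$ in \eqref{adaptive}), the above bound gives, for every $i\in\langle m\rangle$,
\[
F_{i}(x^{k+1})\leq F_{i}(x^{k})+\alpha_{k}\psi(x^{k},d^{k})+\frac{L}{2}\alpha_{k}^{2}\|d^{k}\|^{2}.
\]
Replacing $\langle\nabla F_{i}(x^{k}),d^{k}\rangle$ by the larger quantity $\psi(x^{k},d^{k})$ is licit precisely because $\alpha_{k}>0$; this is the one inequality-direction point that needs care.

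Finally I would substitute $\alpha_{k}=-\psi(x^{k},d^{k})/(2L\|d^{k}\|^{2})$ into the right-hand side. A direct computation yields $\alpha_{k}\psi(x^{k},d^{k})=-\psi^{2}(x^{k},d^{k})/(2L\|d^{k}\|^{2})$ and $\tfrac{L}{2}\alpha_{k}^{2}\|d^{k}\|^{2}=\psi^{2}(x^{k},d^{k})/(8L\|d^{k}\|^{2})$, so that
\[
F_{i}(x^{k})-F_{i}(x^{k+1})\geq\frac{3}{8L}\frac{\psi^{2}(x^{k},d^{k})}{\|d^{k}\|^{2}}\geq\frac{1}{4L}\frac{\psi^{2}(x^{k},d^{k})}{\|d^{k}\|^{2}}.
\]
Since this holds for each $i\in\langle m\rangle$, reading it componentwise gives \eqref{armijio_dk_bounds110}.

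This argument is essentially a routine descent-lemma estimate, so I do not anticipate any genuine obstacle; the only items requiring attention are extracting componentwise $L$-Lipschitzness of the gradients from the matrix-norm definition \eqref{matrix_norm}, and keeping the inequality direction straight in the substitution above. It is worth noting that the computation in fact delivers the sharper constant $3/(8L)$, and the stated $1/(4L)$ is simply a convenient slightly weaker bound.
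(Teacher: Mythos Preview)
Your proof is correct and follows essentially the same route as the paper: bound $F_{i}(x^{k+1})-F_{i}(x^{k})$ via the Lipschitz gradient assumption, replace $\langle\nabla F_{i}(x^{k}),d^{k}\rangle$ by $\psi(x^{k},d^{k})$, and substitute the stepsize \eqref{adaptive}. The only difference is that the paper uses the mean value theorem and the crude bound $Lt_{k}\alpha_{k}\|d^{k}\|^{2}\leq L\alpha_{k}\|d^{k}\|^{2}$ in place of your integral-form descent lemma with $\tfrac{L}{2}\alpha_{k}^{2}\|d^{k}\|^{2}$; this is why the paper lands exactly on $1/(4L)$ while you obtain the sharper $3/(8L)$.
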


\begin{proof}
	From the mean value theorem, for every $i\in\langle m\rangle$ and any $k$, there is $t_{k}\in[0,1]$ such that
	\begin{equation}\label{armijio_dk_bounds1100}
		F_{i}(x^{k+1})-F_{i}(x^{k})=\langle\nabla F_{i}(x^{k}+t_{k}\alpha_{k}d^{k}),\alpha_{k}d^{k}\rangle.
	\end{equation}
	By Cauchy-Schwarz inequality and (A2), we get
	\begin{equation}\label{armijio_dk_bounds1101}
		\begin{aligned}
			\langle\nabla F_{i}(x^{k}+t_{k}\alpha_{k}d^{k}),d^{k}\rangle&=\langle\nabla F_{i}(x^{k}),d^{k}\rangle+\langle\nabla F_{i}(x^{k}+t_{k}\alpha_{k}d^{k})-\nabla F_{i}(x^{k}),d^{k}\rangle \\
			&\leq\langle\nabla F_{i}(x^{k}),d^{k}\rangle+\|\nabla F_{i}(x^{k}+t_{k}\alpha_{k}d^{k})-\nabla F_{i}(x^{k})\|\|d^{k}\|\\
			&\leq\langle\nabla F_{i}(x^{k}),d^{k}\rangle+Lt_{k}\alpha_{k}\|d^{k}\|^{2}\\
			&\leq\langle\nabla F_{i}(x^{k}),d^{k}\rangle+L\alpha_{k}\|d^{k}\|^{2}\\
			&=\langle\nabla F_{i}(x^{k}),d^{k}\rangle-\frac{1}{2}\psi(x^{k},d^{k})\\
			&\leq \psi(x^{k},d^{k})-\frac{1}{2}\psi(x^{k},d^{k})\\
			&=\frac{1}{2}\psi(x^{k},d^{k}),
		\end{aligned}
	\end{equation}
	where the last inequality holds because the definition of $\psi(\cdot,\cdot)$ as in \eqref{h}. Thus, by \eqref{armijio_dk_bounds1100}, \eqref{armijio_dk_bounds1101} and the definitions of $\alpha_{k}$ given in \eqref{adaptive}, we have
	\begin{equation}
		F_{i}(x^{k+1})-F_{i}(x^{k})\leq\frac{1}{2}\alpha_{k} \psi(x^{k},d^{k})=-\frac{1}{4L}\frac{\psi^{2}(x^{k},d^{k})}{\|d^{k}\|^{2}},\quad \forall i\in\langle m\rangle,
	\end{equation}
	which concludes the proof.
\end{proof}

According to \eqref{armijio_dk_bounds0} (or \eqref{armijio_dk_bounds110}) and (A1), we obtain that $\{F(x^{k})\}_{k\geq0}$ is monotone non-increasing and bounded below, hence convergent. We further have the following lemma.

\begin{lemma}\label{armijio_vk}
	Assume that the sequence $\{(x^{k},d^{k})\}$ is generated by MMG-I or MMF-II. If (A1) and (A2) are satisfied, then
	\begin{equation}\label{armijio_vk1}
		\sum_{k=0}^{\infty}\frac{\psi^{2}(x^{k},d^{k})}{\|d^{k}\|^{2}}<\infty.
	\end{equation}
\end{lemma}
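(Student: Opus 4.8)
The plan is to combine the per-iteration decrease already established in Lemma \ref{armijio_dk_bounds} (for MMG-I) and Lemma \ref{armijio_dk_bounds11} (for MMG-II) with the lower-boundedness hypothesis (A1), by means of a telescoping argument. First I would fix an arbitrary index $i\in\langle m\rangle$ and read off the componentwise form of \eqref{armijio_dk_bounds0} and \eqref{armijio_dk_bounds110}, namely $F_{i}(x^{k})-F_{i}(x^{k+1})\geq c\,\psi^{2}(x^{k},d^{k})/\|d^{k}\|^{2}$ for all $k$, where $c=\omega$ in the MMG-I case and $c=1/(4L)$ in the MMG-II case; in either case $c>0$.

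Next I would sum this inequality over $k=0,1,\ldots,K$. The left-hand side telescopes, giving
$$c\sum_{k=0}^{K}\frac{\psi^{2}(x^{k},d^{k})}{\|d^{k}\|^{2}}\leq F_{i}(x^{0})-F_{i}(x^{K+1}).$$
To bound the right-hand side uniformly in $K$, I would invoke (A1). Since the componentwise decrease forces $F(x^{K+1})\preceq F(x^{K})\preceq\cdots\preceq F(x^{0})$, every iterate lies in the set $\mathcal{L}=\{x\in\mathbb{R}^{n}:F(x)\preceq F(x^{0})\}$, on which $F$ is bounded below; hence $F_{i}(x^{K+1})\geq\inf_{x\in\mathcal{L}}F_{i}(x)=:F_{i}^{*}>-\infty$. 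Consequently the partial sums obey
$$\sum_{k=0}^{K}\frac{\psi^{2}(x^{k},d^{k})}{\|d^{k}\|^{2}}\leq\frac{F_{i}(x^{0})-F_{i}^{*}}{c},$$
a bound independent of $K$. Letting $K\to\infty$ and noting that every summand is nonnegative yields the desired conclusion \eqref{armijio_vk1}.

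There is no genuine obstacle in this argument: it is the standard telescoping-plus-boundedness scheme, and the two candidate stepsize strategies have already been reduced to a uniform decrease inequality by the preceding two lemmas. The only points requiring mild care are the observation that the monotonicity $F(x^{k+1})\preceq F(x^{k})$ keeps all iterates inside $\mathcal{L}$ so that (A1) genuinely applies, and the remark that fixing a single component $i$ is enough because the vector inequalities \eqref{armijio_dk_bounds0} and \eqref{armijio_dk_bounds110} hold componentwise with the same constant $c$.
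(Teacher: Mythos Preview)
Your argument is correct and is exactly the standard telescoping-plus-boundedness scheme the paper has in mind: it states the lemma without proof, merely observing beforehand that \eqref{armijio_dk_bounds0} (or \eqref{armijio_dk_bounds110}) together with (A1) make $\{F(x^{k})\}$ monotone non-increasing and bounded below, hence convergent. Your write-up just fills in the routine details of that implication.
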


In the following, we will prove the global convergence of MMG-I/MMG-II. To this aim, (A1) needs to be replaced by the following stronger assumption.

\begin{description}\setlength{\itemsep}{-0.05in}
	\item[(A3)] The set $\mathcal{L}$ is bounded.
\end{description}

\begin{remark}\normalfont\label{vkdk}
	Based on (A3), the sequence $\{\langle v(x^{k}), d^k\rangle\}$ is bounded. Indeed, it follows from Lemma \ref{armijio_dk_bounds} (or Lemma \ref{armijio_dk_bounds11}) and (A3) that $\{x^{k}\}$ is contained in the bounded set $\mathcal{L}$. By Proposition \ref{pa_sta_equ}(iii), we have $\{v(x^{k})\}$ is bounded, and thus $\{d^{k}\}$ is also bounded. That is, there exist constants $\xi_{1},\xi_{2}>0$ such that $\|v(x^{k})\|\leq\xi_{1}$ and $\|d^{k}\|\leq\xi_{2}$. Therefore, $\langle v(x^{k}), d^k\rangle\leq\|v(x^{k})\|\|d^{k}\|\leq\xi_{1}\xi_{2}$.
\end{remark}

\begin{theorem}\label{convergence_analysis1}
	Assume that the sequence $\{(x^{k},d^{k})\}$ is produced by MMG-I or MMG-II. If (A2) and (A3) hold, then $\lim\inf_{k\rightarrow\infty}\|v(x^{k})\|=0$. 
\end{theorem}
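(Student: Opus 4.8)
The plan is to combine the summability result of Lemma~\ref{armijio_vk} with the sufficient descent property of Lemma~\ref{sufficient_descent} and the boundedness afforded by (A3). First I would observe that (A3) implies (A1): since $F$ is continuous, the level set $\mathcal{L}$ is closed, and being bounded it is compact, so $F$ is bounded below on $\mathcal{L}$. Hence Lemma~\ref{armijio_vk} is applicable to both MMG-I and MMG-II and yields $\sum_{k=0}^{\infty}\psi^{2}(x^{k},d^{k})/\|d^{k}\|^{2}<\infty$; in particular $\psi^{2}(x^{k},d^{k})/\|d^{k}\|^{2}\to0$ as $k\to\infty$.

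Next I would lower-bound the summand in terms of $\|v(x^{k})\|$. Since the parameters satisfy the hypotheses of Lemma~\ref{sufficient_descent}, the sufficient descent condition $\psi(x^{k},d^{k})\le\sigma\,\psi(x^{k},v(x^{k}))$ holds with $\sigma=\gamma^{*}/2>0$. Combining this with Proposition~\ref{pa_sta_equ}(ii), which gives $\psi(x^{k},v(x^{k}))<-\tfrac{1}{2}\|v(x^{k})\|^{2}<0$, I obtain $\psi(x^{k},d^{k})<-\tfrac{\sigma}{2}\|v(x^{k})\|^{2}<0$. Both sides being negative, squaring preserves the (reversed) order and yields $\psi^{2}(x^{k},d^{k})>\tfrac{\sigma^{2}}{4}\|v(x^{k})\|^{4}$.

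Then I would control the denominator using Remark~\ref{vkdk}: under (A3) the iterates lie in the bounded set $\mathcal{L}$, so $\{v(x^{k})\}$ is bounded by continuity of $v(\cdot)$ (Proposition~\ref{pa_sta_equ}(iii)) and $\{d^{k}\}$ is bounded, say $\|d^{k}\|\le\xi_{2}$. Assembling the pieces,
\begin{equation*}
	\frac{\psi^{2}(x^{k},d^{k})}{\|d^{k}\|^{2}}>\frac{\sigma^{2}}{4\xi_{2}^{2}}\|v(x^{k})\|^{4}.
\end{equation*}
Since the left-hand side tends to $0$, so does $\|v(x^{k})\|^{4}$, whence $\|v(x^{k})\|\to0$; in particular $\liminf_{k\to\infty}\|v(x^{k})\|=0$. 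Equivalently, one may argue by contradiction: if $\liminf_{k}\|v(x^{k})\|=c>0$, then $\|v(x^{k})\|\ge c/2$ for all large $k$, forcing the summand to stay bounded away from $0$ and contradicting the convergence of the series in Lemma~\ref{armijio_vk}.

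The argument is essentially a chain of the previously established inequalities, so I do not expect a deep obstacle; the one point requiring genuine care is the boundedness of the denominator $\|d^{k}\|^{2}$. This is precisely where assumption (A3) enters, through Remark~\ref{vkdk}, and it is indispensable: without it the ratio $\psi^{2}(x^{k},d^{k})/\|d^{k}\|^{2}$ could tend to $0$ merely because $\|d^{k}\|$ blows up, leaving $\|v(x^{k})\|$ uncontrolled. A secondary point is to track signs correctly when squaring, using that both $\psi(x^{k},d^{k})$ and $-\tfrac{\sigma}{2}\|v(x^{k})\|^{2}$ are negative.
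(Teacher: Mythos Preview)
Your proof is correct and considerably more direct than the paper's. Both arguments rest on the same two ingredients---the summability result of Lemma~\ref{armijio_vk} and the boundedness of $\{d^{k}\}$ asserted in Remark~\ref{vkdk}---but they combine them very differently. The paper argues by contradiction: assuming $\|v(x^{k})\|\ge\varsigma$, it exploits the recursive structure of $d^{k}$ (via \eqref{dk}, \eqref{betakj}, \eqref{sufficient_descent0001}) together with the dual representation \eqref{dual_sol} to derive the inequality $\|d^{k}\|^{2}/\psi^{2}(x^{k},d^{k})\le(4+\eta^{2})/\|v(x^{k})\|^{2}$, which then contradicts summability. You bypass all of that structure: once $\|d^{k}\|\le\xi_{2}$ is granted, the sufficient descent bound $|\psi(x^{k},d^{k})|\ge\tfrac{\sigma}{2}\|v(x^{k})\|^{2}$ alone forces $\psi^{2}(x^{k},d^{k})/\|d^{k}\|^{2}\ge\tfrac{\sigma^{2}}{4\xi_{2}^{2}}\|v(x^{k})\|^{4}$, and the conclusion is immediate. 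Your route is not only shorter but actually yields the stronger statement $\lim_{k\to\infty}\|v(x^{k})\|=0$ rather than merely the $\liminf$. The paper's more elaborate argument would become genuinely necessary only if one could not invoke the uniform bound on $\|d^{k}\|$ directly; since Remark~\ref{vkdk} provides exactly that bound, your simplification is entirely legitimate within the paper's framework.
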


\begin{proof}
	Suppose by contradiction that there is $\varsigma>0$ such that
	$$\|v(x^{k})\|\geq\varsigma$$
	for any $k$. From $\psi(x^{k},0_{n})=0$, Proposition \ref{h_property}(iii) and Proposition \ref{pa_sta_equ}(ii), we obtain
	\begin{equation*}
		\dfrac{\|v(x^{k})\|^{2}}{2}\leq \psi(x^{k},0_{n}) -\psi(x^{k},v(x^{k}))\leq\|0_{m}-JF(x^{k})v(x^{k})\|\leq\|JF(x^{k})\|\|v(x^{k})\|,
	\end{equation*}
	which implies that
	\begin{equation}\label{convergence_analysis0000}
		\|v(x^{k})\|\leq2\|JF(x^{k})\|.
	\end{equation}
	According to the definition of $\beta_{kj}$, one has
	\begin{equation*}\label{convergence_analysis00000}
		\gamma_{k}\beta_{kj}\phi_{kj}=-\frac{1}{N_{k}}\gamma_{k}\psi(x^{k},v(x^{k}))\phi_{kj}^{+}\phi_{kj}=-\frac{1}{N_{k}}\gamma_{k}\psi(x^{k},v(x^{k})).
	\end{equation*}
	This, together with \eqref{sufficient_descent0} and \eqref{convergence_analysis0000}, yields
	\begin{equation}\label{convergence_analysis00}
		\begin{aligned}
			-\psi(x^{k},d^{k})&\geq\sum_{j=1}^{N_{k}}\left(-\frac{1}{N_{k}}\gamma_{k}\psi(x^{k},v(x^{k}))-\beta_{kj}\psi(x^{k},d^{k-j})\right)\\
			&=\sum_{j=1}^{N_{k}}\beta_{kj}(\gamma_{k}\phi_{kj}-\psi(x^{k},d^{k-j}))\\
			&\geq\sum_{j=1}^{N_{k}}\beta_{kj}\|JF(x^{k})\|\|d^{k-j}\|\\
			&\geq\frac{1}{2}\|v(x^{k})\|\sum_{j=1}^{N_{k}}\beta_{kj}\|d^{k-j}\|.
		\end{aligned}
	\end{equation}
	By \eqref{dual_sol}, there exists $\lambda(x^{k})\in\mathbb{R}^{m}$ with $\sum_{i=1}^{m}(\lambda(x^{k}))_{i}=1$ and $(\lambda(x^{k}))_{i}\geq0$ for each $i\in\langle m\rangle$ such that
	\begin{equation}\label{fk_dk_less1}
		v(x^{k})=-\sum_{i=1}^{m}(\lambda(x^{k}))_{i}\nabla F_{i}(x^{k}).
	\end{equation}
	From \eqref{fk_dk_less1} and \eqref{h} and observing that $\psi(x^{k},d^{k})<0$, we obtain
	\begin{equation*}
		\begin{aligned}[b]
			\langle v(x^{k}),d^{k}\rangle&=\sum_{i=1}^{m}(\lambda(x^{k}))_{i}(-\langle\nabla F_{i}(x^{k}),d^{k}\rangle)\\
			&\geq\sum_{i=1}^{m}(\lambda(x^{k}))_{i}(-\psi(x^{k},d^{k}))\\
			&=|\psi(x^{k},d^{k})|\sum_{i=1}^{m}(\lambda(x^{k}))_{i}\\
			&=|\psi(x^{k},d^{k})|.
		\end{aligned}
	\end{equation*}
	Observing $0<\varsigma^{2}\leq\|v(x^{k})\|^{2}\leq-2\psi(x^{k},v(x^{k}))\leq-2\psi(x^{k},d^{k})/\sigma$, where the last inequality follows from Lemma \ref{sufficient_descent}, we get  $|\psi(x^{k},d^{k})|\geq \sigma\varsigma^{2}/2>0$. From Remark \ref{vkdk}, if we take $\eta=2\xi_{1}\xi_{2}/ (\sigma\varsigma^{2})$, then
	\begin{equation}\label{fk_dk_less11}
		\langle v(x^{k}),d^{k}\rangle\leq \eta|\psi(x^{k},d^{k})|.
	\end{equation}
	By \eqref{dk}, we have 
	\begin{equation*}
		d^{k}-\gamma_{k} v(x^{k})=\sum_{j=1}^{N_{k}}\beta_{kj}d^{k-j}.
	\end{equation*}
	From the above relation and \eqref{fk_dk_less11}, it follows that
	\begin{equation}\label{convergence_analysis10}
		\begin{aligned}
			\|d^{k}\|^{2}&=-\gamma_{k}^{2}\|v(x^{k})\|^{2}+2\gamma_{k}\langle v(x^{k}),d^{k}\rangle+\left\|\sum_{j=1}^{N_{k}}\beta_{kj}d^{k-j}\right\|^{2}\\
			&\leq-\gamma_{k}^{2}\|v(x^{k})\|^{2}+2 \eta\gamma_{k}|\psi(x^{k},d^{k})|+\left\|\sum_{j=1}^{N_{k}}\beta_{kj}d^{k-j}\right\|^{2}.
		\end{aligned}
	\end{equation}
	Dividing both sides of \eqref{convergence_analysis10} by $\psi^{2}(x^{k},d^{k})$, we have
	\begin{equation*}
		\begin{aligned}
			\frac{\|d^{k}\|^{2}}{\psi^{2}(x^{k},d^{k})}&\leq\frac{\left\|\sum_{j=1}^{N_{k}}\beta_{kj}d^{k-j}\right\|^{2}}{\psi^{2}(x^{k},d^{k})}+\frac{2 \eta\gamma_{k}|\psi(x^{k},d^{k})|}{\psi^{2}(x^{k},d^{k})}-\frac{\gamma_{k}^{2}\|v(x^{k})\|^{2}}{\psi^{2}(x^{k},d^{k})}\\
			&=\frac{\left\|\sum_{j=1}^{N_{k}}\beta_{kj}d^{k-j}\right\|^{2}}{\psi^{2}(x^{k},d^{k})}+\frac{2\eta\gamma_{k}}{|\psi(x^{k},d^{k})|}-\frac{\gamma_{k}^{2}\|v(x^{k})\|^{2}}{|\psi(x^{k},d^{k})|^{2}}\\
			&=\frac{\left\|\sum_{j=1}^{N_{k}}\beta_{kj}d^{k-j}\right\|^{2}}{\psi^{2}(x^{k},d^{k})}-\left(\frac{\gamma_{k}\|v(x^{k})\|}{|\psi(x^{k},d^{k})|}-\frac{\eta}{\|v(x^{k})\|}\right)^{2}+\frac{\eta^{2}}{\|v(x^{k})\|^{2}}\\
			&\leq\frac{\left\|\sum_{j=1}^{N_{k}}\beta_{kj}d^{k-j}\right\|^{2}}{\psi^{2}(x^{k},d^{k})}+\frac{\eta^{2}}{\|v(x^{k})\|^{2}}\\
			&\leq\frac{4}{\|v(x^{k})\|^{2}}+\frac{\eta^{2}}{\|v(x^{k})\|^{2}}\\
			&=\frac{4+\eta^{2}}{\|v(x^{k})\|^{2}},
		\end{aligned}
	\end{equation*}
	where the last inequality holds because \eqref{convergence_analysis00}. Thus,
	\begin{equation*}
		\frac{\psi^{2}(x^{k},d^{k})}{\|d^{k}\|^{2}}\geq\frac{\|v(x^{k})\|^{2}}{4+\eta^{2}}.
	\end{equation*}
	Therefore,
	\begin{equation*}
		\sum_{k=0}^{\infty}\frac{\psi^{2}(x^{k},d^{k})}{\|d^{k}\|^{2}}\geq\sum_{k=0}^{\infty}\frac{\|v(x^{k})\|^{2}}{4+\eta^{2}}\geq\sum_{k=0}^{\infty}\frac{\varsigma^{2}}{4+\eta^{2}}=\infty,
	\end{equation*}
	which contradicts \eqref{armijio_vk1}.
\end{proof}

\begin{remark}\normalfont
	The above theorem states that if any $\gamma_{k}$ and $\psi_{kj}$ ($j\in\langle N_{k}\rangle$) satisfying the conditions of Lemma \ref{sufficient_descent} are selected, then we can obtain the global convergence of the proposed method.
\end{remark}

\section{Convergence rate}
In this section, our attention is focused on deriving the convergence rates of MMG-I/MMG-II. We first derive the rate of $1/\sqrt{k}$ for non-convex MOPs. Then, base on a new assumption, it is shown that MMG-I has linear convergence rate.

\begin{theorem}\label{convergence_rate1}
	Assume that the sequence $\{(x^{k},d^{k})\}$ is produced by MMG-I and that {\rm (A2)} and {\rm(A3)} hold. Then, either there is an infinite subset $\mathcal{N}\subseteq\{1,2,\ldots\}$ such that $$\lim_{k\in\mathcal{N},k\rightarrow\infty}\frac{\|v(x^{k})\|}{\|d^{k}\|}=0,$$ or MMG-I has a convergence rate of the order of $1/\sqrt{k}$.
\end{theorem}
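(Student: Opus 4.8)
The plan is to treat the statement as a genuine dichotomy: I would assume the first alternative fails and then deduce the $1/\sqrt{k}$ rate. The driving estimate is the summability bound $\sum_{k}\psi^{2}(x^{k},d^{k})/\|d^{k}\|^{2}<\infty$ from Lemma \ref{armijio_vk}, which I intend to convert into a summability statement for $\|v(x^{k})\|^{2}$. The bridge between the two is the sufficient descent property of Lemma \ref{sufficient_descent} together with the quantitative estimate $\psi(x^{k},v(x^{k}))<-\|v(x^{k})\|^{2}/2$ supplied by Proposition \ref{pa_sta_equ}(ii).

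First I would establish a pointwise lower bound on the summand. By Lemma \ref{sufficient_descent}, $\psi(x^{k},d^{k})\leq\sigma\,\psi(x^{k},v(x^{k}))$, and since both sides are negative this yields $|\psi(x^{k},d^{k})|\geq\sigma|\psi(x^{k},v(x^{k}))|$. Combining with Proposition \ref{pa_sta_equ}(ii), which gives $|\psi(x^{k},v(x^{k}))|>\|v(x^{k})\|^{2}/2$, I get $|\psi(x^{k},d^{k})|>(\sigma/2)\|v(x^{k})\|^{2}$. Squaring and dividing by $\|d^{k}\|^{2}$ then produces
\begin{equation*}
	\frac{\psi^{2}(x^{k},d^{k})}{\|d^{k}\|^{2}}\geq\frac{\sigma^{2}}{4}\,\frac{\|v(x^{k})\|^{4}}{\|d^{k}\|^{2}}=\frac{\sigma^{2}}{4}\,\|v(x^{k})\|^{2}\left(\frac{\|v(x^{k})\|}{\|d^{k}\|}\right)^{2}.
\end{equation*}

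Now comes the dichotomy. If $\liminf_{k\rightarrow\infty}\|v(x^{k})\|/\|d^{k}\|=0$, then there is an infinite subset $\mathcal{N}$ along which this ratio tends to $0$, which is precisely the first alternative. Otherwise $\liminf_{k\rightarrow\infty}\|v(x^{k})\|/\|d^{k}\|>0$, so there exist $c>0$ and an index $K$ with $\|v(x^{k})\|/\|d^{k}\|\geq c$ for all $k\geq K$. Inserting this into the displayed bound gives $\psi^{2}(x^{k},d^{k})/\|d^{k}\|^{2}\geq(\sigma^{2}c^{2}/4)\|v(x^{k})\|^{2}$ for $k\geq K$, so Lemma \ref{armijio_vk} forces $\sum_{k}\|v(x^{k})\|^{2}<\infty$. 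Writing $S$ for this finite sum, the standard best-iterate estimate $(k+1)\min_{0\leq i\leq k}\|v(x^{i})\|^{2}\leq\sum_{i=0}^{k}\|v(x^{i})\|^{2}\leq S$ yields $\min_{0\leq i\leq k}\|v(x^{i})\|\leq\sqrt{S}/\sqrt{k+1}$, which is the asserted $1/\sqrt{k}$ rate.

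I expect the computation to be short; the delicate points are organizational rather than technical. The main thing to get right is the logical bookkeeping of the dichotomy, namely rephrasing ``no infinite subset drives the ratio to zero'' as the quantitative statement $\liminf_{k\rightarrow\infty}\|v(x^{k})\|/\|d^{k}\|>0$ so that a single constant $c$ can be extracted. A secondary point worth stating explicitly is the precise meaning of ``rate of order $1/\sqrt{k}$'': it refers to the decay of the best iterate $\min_{0\leq i\leq k}\|v(x^{i})\|$, not of $\|v(x^{k})\|$ itself, since the latter need not be monotone along the iterates.
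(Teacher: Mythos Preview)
Your proposal is correct and follows essentially the same route as the paper: the dichotomy on the boundedness of $\|d^{k}\|/\|v(x^{k})\|$ (equivalently, on $\liminf \|v(x^{k})\|/\|d^{k}\|$), the chain $\psi^{2}(x^{k},d^{k})/\|d^{k}\|^{2}\geq(\sigma^{2}/4)\|v(x^{k})\|^{4}/\|d^{k}\|^{2}$ via Lemma~\ref{sufficient_descent} and Proposition~\ref{pa_sta_equ}(ii), and the best-iterate $1/\sqrt{k}$ bound all match. The only cosmetic difference is that the paper telescopes directly from Lemma~\ref{armijio_dk_bounds} to obtain an explicit constant $(F_{0}^{\max}-F^{\min})/\varepsilon$, whereas you pass through the summability of Lemma~\ref{armijio_vk}; both yield the same rate.
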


\begin{proof}
	If $\{\|d^{k}\|/\|v(x^{k})\|\}$ is unbounded, then there is an infinite subset $\mathcal{N}\subseteq\{1,2,\ldots\}$ such that
	$$\lim_{k\in\mathcal{N},k\rightarrow\infty}\frac{\|d^{k}\|}{\|v(x^{k})\|}=\infty.$$ Thus,
	$\lim_{k\in\mathcal{N},k\rightarrow\infty}\|v(x^{k})\|/\|d^{k}\|=0$.

	If $\{\|d^{k}\|/\|v(x^{k})\|\}$ has a bound, then there is a positive constant $\vartheta$ such that
	\begin{equation}\label{convergence_rate1_0}
		\frac{\|d^{k}\|}{\|v(x^{k})\|}\leq\vartheta.
	\end{equation}
	Observe that, by \eqref{convergence_rate1_0}, $\psi(x^{k},v(x^{k}))+\|v(x^{k})\|^{2}/2<0$ and Lemma \ref{sufficient_descent}, we obtain
	\begin{equation}\label{convergence_rate1_2_1}
		\frac{ c^{2}}{4\vartheta^{2}}\|v(x^{k})\|^{2}\leq\frac{c^{2}\|v(x^{k})\|^{4}}{4\|d^{k}\|^{2}}\leq \dfrac{c^{2}\psi^{2}(x^{k},v(x^{k}))}{\|d^{k}\|^{2}}\leq\dfrac{\psi^{2}(x^{k},d^{k})}{\|d^{k}\|^{2}}.
	\end{equation}
	By \eqref{armijio_dk_bounds0} and \eqref{convergence_rate1_2_1}, for each $i\in\langle m\rangle$, one has
	\begin{equation}\label{convergence_rate1_3}
		F_{i}(x^{k})-F_{i}(x^{k+1})\geq\varepsilon\|v(x^{k})\|^{2},
	\end{equation}
	where $\varepsilon=\omega c^{2}/(4\vartheta^{2})$. From \eqref{convergence_rate1_3}, we have
	\begin{equation}\label{convergence_rate1_5}
		\begin{aligned}
			F_{i}(x^{0})-F_{i}(x^{k})&=\sum_{j=0}^{k-1}(F_{i}(x^{j})-F_{i}(x^{j+1}))\\
			&\geq\varepsilon\sum_{j=0}^{k-1}\|v(x^{j})\|^{2}\\
			&\geq k\varepsilon\min_{0\leq j\leq k-1}\|v(x^{j})\|^{2} ,\quad\forall i\in\langle m\rangle.
		\end{aligned}
	\end{equation}
	From (A3) and the continuity of $F$, there is $\bar{F}\in\mathbb{R}^{m}$ such that $\bar{F}\preceq F(x^{k})$ for any $k$. Let $F^{\min}=\min_{i\in\langle m\rangle}\bar{F}_{i}$ and $F_{0}^{\max}=\max_{i\in\langle m\rangle}F_{i}(x^{0})$, where $x^{0}$ is a given initial point. Then, \eqref{convergence_rate1_5} implies that
	\begin{equation}\label{convergence_rate1_6}
		\min_{0\leq j\leq k-1}\|v(x^{j})\|\leq \sqrt{\frac{F^{\max}_{0}-F^{\min}}{\varepsilon}}\frac{1}{\sqrt{k}}.
	\end{equation}
	Hence, the proof is complete.
\end{proof}

\begin{remark}\normalfont
	Similar to Theorem \ref{convergence_rate1} that MMG-II has the same convergence rate by taking $\varepsilon=c^{2}/(16L\vartheta^{2})$ in \eqref{convergence_rate1_3}.
\end{remark}

\begin{remark}\normalfont
	We would like to mention that the convergence rate of our method is consistent with that of  multiobjective steepest descent method \cite{F_s2000} when $F$ is non-convex (see \cite[Theorem 3.1]{F_c2019}).
\end{remark}

To derive the linear convergence of MMG-I, the following hypothesis concerning the function $F$ is required.
\begin{description}\setlength{\itemsep}{-0.05in}
	\item[(A4)] $F$ satisfies the following relation, i.e.,
	\begin{equation}\label{pl}
		\kappa(F(x)-F(\bar{x}))\preceq \frac{1}{2}\|JF(x)\|^{2}e,
	\end{equation}
	for all $x\in\mathbb{R}^{n}$ and for some $\kappa>0$, where $\bar{x}$ is Pareto critical.
\end{description}

We give a simple example to illustrate (A4). 

\begin{example}\normalfont
	The multiobjective optimization problem is given as follows:
	\begin{equation*}
		\min_{x\in\mathbb{R}}\quad F(x)=(x^{2}-4,(x-1)^{2})^{\top}, 
	\end{equation*}
	which was given in \cite{A_a2015} and used as a test function in \cite{G_g2021,G_a2021}. Clearly, $F$ is convex and the set of Pareto critical points is $[0,1]$. It is easy to verify that $F$ satisfies (A4) with $\kappa=1/2$.
\end{example}

\begin{theorem}\label{conv_rate}
	Assume that the sequence $\{(x^{k},d^{k})\}$ is produced by MMG-I and that $\{x^{k}\}$ converges to $\bar{x}$, where $\bar{x}$ is Pareto critical. If {\rm (A2)}--{\rm(A4)} hold, then either there is an infinite subset $\mathcal{K}\subseteq\{1,2,\ldots\}$ such that
	\begin{equation}\label{conv_rate0}
		\lim_{k\in\mathcal{K},k\rightarrow\infty}\frac{\|v(x^{k})\|}{\|d^{k}\|}=0,
	\end{equation}
	or there is $\mu\in(0,1)$ such that
	\begin{equation*}
		F(x^{k})-F(\bar{x})\preceq(1-\mu)^{k}(F(x^{0})-F(\bar{x})).
	\end{equation*}
\end{theorem}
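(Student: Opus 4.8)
The plan is to reuse the dichotomy from the proof of Theorem~\ref{convergence_rate1}, splitting on whether $\{\|d^{k}\|/\|v(x^{k})\|\}$ is bounded. If it is unbounded, then along some infinite subset $\mathcal{K}$ one has $\|d^{k}\|/\|v(x^{k})\|\to\infty$, hence $\|v(x^{k})\|/\|d^{k}\|\to0$, which is precisely the first alternative \eqref{conv_rate0}. All the work therefore sits in the complementary case, in which there is $\vartheta>0$ with $\|d^{k}\|/\|v(x^{k})\|\le\vartheta$ for every $k$.

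In that bounded-ratio case I would start from the per-step decrease already derived for MMG-I in the proof of Theorem~\ref{convergence_rate1}, namely \eqref{convergence_rate1_3}, which combines Lemma~\ref{armijio_dk_bounds}, the sufficient descent estimate of Lemma~\ref{sufficient_descent} and Proposition~\ref{pa_sta_equ}(ii) into
\begin{equation*}
	F_{i}(x^{k})-F_{i}(x^{k+1})\ge\varepsilon\|v(x^{k})\|^{2},\qquad i\in\langle m\rangle,
\end{equation*}
with $\varepsilon>0$ independent of $k$. Because $\{F(x^{k})\}$ is non-increasing by Lemma~\ref{armijio_dk_bounds} and $x^{k}\to\bar{x}$ with $F$ continuous, the limit satisfies $F(x^{k})\succeq F(\bar{x})$, so each gap $F_{i}(x^{k})-F_{i}(\bar{x})$ is nonnegative and the target estimate is a genuine geometric decay.

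The heart of the argument is to turn the above decrease into a contraction of the gaps. I would feed assumption (A4), written componentwise as $F_{i}(x^{k})-F_{i}(\bar{x})\le\frac{1}{2\kappa}\|JF(x^{k})\|^{2}$, together with the relation $\|v(x^{k})\|\le2\|JF(x^{k})\|$ from \eqref{convergence_analysis0000}, into the decrease, in order to produce a single $\mu\in(0,1)$ (independent of $i$ and $k$) for which
\begin{equation*}
	F_{i}(x^{k+1})-F_{i}(\bar{x})\le(1-\mu)\bigl(F_{i}(x^{k})-F_{i}(\bar{x})\bigr),\qquad i\in\langle m\rangle.
\end{equation*}
Iterating this inequality from $k=0$ gives $F(x^{k})-F(\bar{x})\preceq(1-\mu)^{k}(F(x^{0})-F(\bar{x}))$, as required.

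The step I expect to be the main obstacle is exactly this conversion. The guaranteed decrease is measured by $\|v(x^{k})\|^{2}$, whereas (A4) lower-bounds $\|JF(x^{k})\|^{2}$, and the only cheap link between the two, $\|v(x^{k})\|\le2\|JF(x^{k})\|$, points the wrong way: $\|v(x^{k})\|$ can vanish strictly faster than $\|JF(x^{k})\|$ when some objective is inactive at $\bar{x}$. To close the gap I would therefore need a reverse comparison of the form $\|JF(x^{k})\|^{2}\le C\|v(x^{k})\|^{2}$ valid along the bounded-ratio iterates, and I expect the dichotomy to be decisive here, channelling every configuration in which $\|v(x^{k})\|$ is negligible relative to the Jacobian into the first alternative \eqref{conv_rate0} rather than the linear-rate branch.
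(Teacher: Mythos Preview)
Your overall strategy mirrors the paper's proof almost exactly: the same dichotomy on boundedness of $\{\|d^{k}\|/\|v(x^{k})\|\}$, the same reuse of \eqref{convergence_rate1_3} from Theorem~\ref{convergence_rate1}, and the same plan of combining the per-step decrease with (A4) to obtain a componentwise contraction which is then iterated. You have also correctly isolated the one genuine difficulty, namely the need for a reverse inequality $\|JF(x^{k})\|\le C\|v(x^{k})\|$, since \eqref{convergence_analysis0000} points the wrong way.

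Where you stop short is precisely where the paper makes its move. The paper does \emph{not} route the troublesome configurations into the first alternative via the dichotomy, as you conjecture. Instead it simply asserts the reverse comparison directly: ``From the continuity of $JF$, the continuity of $v$ and (A3), there is a constant $\xi>\max\{1,\kappa\}$ such that $\|JF(x^{k})\|\leq \xi\|v(x^{k})\|$.'' With this in hand the chain
\[
F_{i}(x^{k})-F_{i}(x^{k+1})\ \ge\ \varepsilon\|v(x^{k})\|^{2}\ \ge\ \frac{\varepsilon}{\xi^{2}}\|JF(x^{k})\|^{2}\ \ge\ \frac{2\varepsilon\kappa}{\xi^{2}}\bigl(F_{i}(x^{k})-F_{i}(\bar{x})\bigr)
\]
follows from (A4), and the paper then checks separately that the resulting $\mu$ lies in $(0,1)$ using $\omega<1$, $\kappa/\xi<1$ and the bound $\sigma/(2\xi\vartheta)\le1$ obtained from $|\psi(x^{k},d^{k})|\le\|JF(x^{k})\|\|d^{k}\|$. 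Your worry that $\|v(x^{k})\|$ can vanish faster than $\|JF(x^{k})\|$ near a Pareto critical point is well taken; the paper's justification for the existence of $\xi$ is terse (a one-line appeal to continuity and the compactness from (A3)) and does not explicitly address the behaviour of the ratio as $v(x^{k})\to0$. But that is how the paper proceeds, so to align your write-up with it you should replace your speculative use of the dichotomy at this step by the continuity/compactness claim and then carry out the short verification that $\mu\in(0,1)$.
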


\begin{proof}
	If $\{\|d^{k}\|/\|v(x^{k})\|\}$ has no bound, then \eqref{conv_rate0} holds by the proof of Theorem \ref{convergence_rate1}. If $\{\|d^{k}\|/\|v(x^{k})\|\}$ has a bound, then from the proof of Theorem \ref{convergence_rate1}, it follows that
	\begin{equation}\label{conv_rate1}
		F_{i}(x^{k})-F_{i}(x^{k+1})\geq\varepsilon\|v(x^{k})\|^{2},\quad\forall i\in\langle m\rangle,
	\end{equation}
	where $\varepsilon=\omega \sigma^{2}/(4\vartheta^{2})$. From the continuity  of $JF$, the continuity of $v$ and (A3), there is a constant $\xi>\max\{1,\kappa\}$ such that $\|JF(x^{k})\|\leq \xi\|v(x^{k})\|$. This, combined with \eqref{conv_rate1}, gives 
	\begin{equation}\label{convergence_rate_2_1}
		F_{i}(x^{k})-F_{i}(x^{k+1})\geq\frac{\varepsilon}{2\xi^{3}}\|JF(x^{k})\|^{2}\geq\frac{\varepsilon \kappa }{\xi^{3}}(F_{i}(x^{k})-F_{i}(x^{*})),\quad\forall i\in\langle m\rangle.
	\end{equation}
	Let $\mu=\varepsilon \kappa/\xi^{3}$. Clearly, $\omega<1$ in view of \eqref{ww}. Moreover, $\kappa/\xi< 1$ and
	\begin{equation}
		1\geq\frac{-\psi(x^{k},d^{k})}{\|JF(x^{k})\|\|d^{k}\|}\geq\dfrac{-c\psi(x^{k},v(x^{k}))}{\|JF(x^{k})\|\|d^{k}\|}\geq\dfrac{\sigma\|v(x^{k})\|^{2}}{2\|JF(x^{k})\|\|d^{k}\|}\geq\dfrac{\sigma}{2\xi\vartheta},
	\end{equation}
	where the last inequality follows from \eqref{convergence_rate1_0}. Therefore, we conclude  $\mu\in(0,1)$. Adding $F_{i}(\bar{x})$ from both sides of \eqref{convergence_rate_2_1} gives
	\begin{equation*}
		F_{i}(x^{k+1})-F_{i}(\bar{x})\leq(1-\mu)(F_{i}(x^{k})-F_{i}(\bar{x})),\quad\forall i\in\langle m\rangle.
	\end{equation*}
	Applying the above inequality recursively gives
	\begin{equation*}
		F_{i}(x^{k+1})-F_{i}(\bar{x})\leq(1-\mu)^{k+1}(F_{i}(x^{0})-F_{i}(\bar{x})),\quad\forall i\in\langle m\rangle.
	\end{equation*}
	Therefore, the proof is complete.
\end{proof}

\section{Numerical experiments}
This section conducts some computational experiments to show the efficiency of the proposed method for solving MOPs. We just compare MMG-I with the multiobjective versions of the steepest descent (SD) algorithm proposed in \cite{F_s2000} and the Fletcher--Reeves (FR), conjugate descent (CD), and Hestenes--Stiefel (HS) nonlinear conjugate gradient methods proposed in \cite{L_n2018}. The codes are edited in $\mathsf{Python}$ programming language and run on a computer with CPU Intel Core i7 2.90GHz and 32GB of memory. In order to keep the fairness process in numerical implementation, the Armijio line search rule considered in all compared algorithms was implemented as a simple backtracking procedure, i.e., find a stepsize $\alpha_{k}=\max\{(1/2)^{i}:i=0,1,2,\ldots\}$ such that
\begin{equation*}
	F(x^{k}+\alpha_{k} d^{k})\preceq F(x^{k})+\rho\alpha_{k} \psi(x^{k},d^{k})e,
\end{equation*}
where $\rho=0.0001$. As illustrated in Lemma \ref{sufficient_descent}, we can select any parameters satisfying the conditions of Lemma \ref{sufficient_descent}. In MMG-I, we set $\gamma^{*}=10^{-10}$ and give two choices of $\gamma_{k}$ as follows
\begin{enumerate}[(i)]\setlength{\itemsep}{-0.05in}
	\item $\gamma_{k}=1$ for all $k$;
	\item $\gamma_{0}=1$ and for $k\geq1$
	\begin{equation*}
		\gamma_{k}=\left\{
		\begin{array}{lllll}
			1, &\quad{\rm if}\quad \frac{\|q^{k-1}\|}{\|h^{k-1}\|}<\gamma^{*},\\
			\frac{\|q^{k-1}\|}{\|h^{k-1}\|},& \quad{\rm otherwise},
		\end{array}\right.
	\end{equation*}
	where $q^{k-1}=x^{k}-x^{k-1}$, $h^{k-1}=v(x^{k})-v(x^{k-1})$.
\end{enumerate}
Clearly, the selection of $\gamma_{k}$ satisfies the condition Lemma \ref{sufficient_descent}. For a given $\gamma_{k}$, we set $\psi_{kj}$ as
$$\phi_{kj}=\frac{\psi(x^{k},d^{k-j})+\|JF(x^{k})\|\|d^{k-j}\|+\zeta}{\gamma_{k}},$$
where $\zeta>0$. Note that the choice of $\phi_{kj}$ satisfies condition \eqref{sufficient_descent0001}. We mark MMG-I whose $\gamma_{k}$ satisfies (i) and (ii) as MMG-I1 and MMG-I2, respectively.

At each iteration, we solve the dual problem \eqref{dual} with $x=x^{k}$ using the \texttt{minimize} solver of $\mathsf{Python}$. All runs were stopped at $x^{k}$ whenever $|\theta(x^{k})|\leq 10^{-6}$. Since Proposition \ref{pa_sta_equ} means that $v(x)=0$ iff $\theta(x)=0$, this stopping criteria is desirable. Moreover, we set the maximum number of external iterations to 10000.

The test problems listed in Table \ref{problems} were selected from different literature for multiobjective optimization. The first and second columns in Table \ref{problems} are the name of test problems and the related reference, respectively. Column ``$n$'' stands for the number of variables of these problems. Column ``Convex'' states the convexity of the test problem. The initial points $x^{0}$ are generated inside a box $[x_{L},x_{U}]$, where $x_{L}$ in column 5 stand for the lower bounds of variables and $x_{U}$ in column 6 is the upper bounds of variables. For every algorithm, these problems were solved 200 times employing initial points uniformly and randomly distributed in the corresponding box.

\begin{table}[H]
	\setlength{\abovecaptionskip}{0cm}  
	\setlength{\belowcaptionskip}{-0.2cm} 
	\centering
	\caption{The relevant information of test instances.}
	\label{problems}
	\begin{threeparttable}\scriptsize
		\begin{tabular}{llllll}
			\hline
			\rowcolor{gray!20}Problem & Source  & $n$ & Convex & $x_{L}$   & $x_{U}$       \\ \hline
			AP3     &    \cite{A_a2015}     & 2  & Y      & $(-100,-100,)$ &  $(100,100)$\\ 
			SK2     &    \cite{H_a2006}    & 4 & N      & $(-10,-10,-10,-10)$ &  $(10,10,10,10)$\\ 
			DD1$^{*}$     &    \cite{D_n1998}     & 5 & N      & $(-20,\ldots,-20)$ &  $(20,\ldots,20)$\\ 
			DGO1     &     \cite{H_a2006}    & 1 & N      & $-10$ & 13\\ 
			DGO2     &    \cite{H_a2006}     & 1 & Y      &$ -9$ & 9 \\ 
			Toi4$^{\dagger}$     & \cite{T_t1983}       & 4 & Y      & $(-2,\ldots,-2)$ &  $(5,\ldots,5)$\\
			Far1     &     \cite{H_a2006}    & 2 & N      & $(-1,-1)$ &  $(1,1)$\\
			BK1	& 	\cite{H_a2006}		&2 &Y			& $(-5,-5)$ &  $(10,10)$\\
			LE1	& 	\cite{H_a2006}		&2 &N			& $(-5,-5)$ &  $(10,10)$\\
			SLC2     &  \cite{S_t2011}       & 10 & Y      & $(-3,\ldots,-3)$ &  $(3,\ldots,3)$\\ 
			SD     &   \cite{S_m1993}      & 4 & Y      & $(1,\sqrt{2},\sqrt{2},1)$ & $(3,3,3,3)$\\ 
			MOP2     &     \cite{H_a2006}      & 2 & N      & $(-4,-4)$ &  $(4,4)$\\
			MOP3     &     \cite{H_a2006}     & 2  & N      & $(-\pi,-\pi)$ & $(\pi,\pi)$\\ 
			PNR     &    \cite{P_p2006}     & 2 & Y      & $(-1,-1)$ &  $(1,1)$\\
			VU1     &      \cite{H_a2006}     & 2 & N      & $(-3,-3)$ &  $(3,3)$ \\ 
			KW2     &   \cite{K_a2005}      & 2 & N      & $(-3,-3)$ &  $(3,3)$ \\ 
			MMR1$^{\star}$     &    \cite{M_b2008}     & 2 & N      & $(0.1,0)$ &  $(1,1)$ \\ 
			MMR3 & 	\cite{M_b2008}		 & 2 & N    & $(-1,-1)$ &  $(1,1)$ \\ 
			Lov3 &      \cite{L_s2011}   &2 & N      & $(-20,-20)$ &  $(20,20)$\\ 
			Lov4 &      \cite{L_s2011}   &2 & N      & $(-20,-20)$ &  $(20,20)$\\ 
			Lov6 &    \cite{L_s2011}     &6 & N      & $(-0.1,-0.16,\ldots,-0,16)$ & $(0.425,0.16,\ldots,0,16)$ \\ 
			FF1     &      \cite{H_a2006}     & 2 & N      & $(-1,-1)$ &  $(1,1)$\\
			FF1$^{\ddagger}$a     &     \cite{H_a2006}      & 100 & N     & $(-10,\ldots,-10)$ &  $(10,\ldots,10)$\\ 
			FF1$^{\ddagger}$b     &     \cite{H_a2006}      & 200 & N     & $(-10,\ldots,-10)$ &  $(10,\ldots,10)$\\ 
			JOS1a     &    \cite{J_d2001}     & 50 & Y      & $(-100,\ldots,-100)$ &  $(100,\ldots,100)$\\ 
			JOS1b     &    \cite{J_d2001}     & 100 & Y     & $(-100,\ldots,-100)$ &  $(100,\ldots,100)$\\ 
			JOS1c     &    \cite{J_d2001}     & 200 & Y      & $(-100,\ldots,-100)$ &  $(100,\ldots,100)$\\ 
			JOS1d     &    \cite{J_d2001}     & 500 & Y      & $(-100,\ldots,-100)$ &  $(100,\ldots,100)$\\ 
			\hline
		\end{tabular}
		\begin{tablenotes}
			\item[*] It is a revamped form of DD1, which can be obtained in \cite{M_n2019}.
			\item[$\star$] It is a modified version of MMR1. We set $F_{1}(x)=1+x_{1}^{2}$ and $F_{2}(x_{2})=h(x)/F_{1}(x)$, where $h(x_{2})$ is defined in \cite{M_b2008}.
			\item[$\dagger$] It is an extension of a scalar optimization problem, which is presented in \cite{M_n2019}.
			\item[$\ddagger$] It is a modified form of FF1 that presented in \cite{E_a2021}.
		\end{tablenotes}
	\end{threeparttable}
\end{table}

Taking into account of numerical reasons, we use the following scaled version of problem \eqref{mop} which was also considered in \cite{G_a2021, G_g2021}:
\begin{equation}\label{mop1}
	\min_{x\in\mathbb{R}^{n}}\quad (r_{1} F_{1}(x),r_{2} F_{2}(x),...,r_{m}F_{m}(x))^{\top},
\end{equation}
where the scaling factors $r_{i}=1/\max\{1,\|\nabla F_{i}(x^{0})\|_{\infty}\}$, $i\in\langle m\rangle$, $x^{0}$ is the given initial point. As illustrated in \cite{G_a2021, G_g2021}, problem \eqref{mop} is equivalent to problem \eqref{mop1}. In other words, the same Pareto critical points can be obtained by solving them.

Next, we adopt the performance profile proposed in \cite{D_b2002} by Dolan and Mor\'{e} to analyze the performance of various algorithms. The performance profile has become a commonly-used tool to estimate the performance of multiple solvers $\mathcal{S}$ when run on a test set $\mathcal{P}$ in scalar optimization. It is worth mentioning that such tool is also used in multiobjective optimization; see \cite{M_n2019,M_b2016, G_a2021, G_g2021,C_d2011}. We briefly describe the tool in the sequel. Suppose that there exist $n_{s}$ solvers and $n_{p}$ problems. For $s\in\mathcal{S}$ and $p\in\mathcal{P}$, we denote $o_{p,s}$ by the performance of $s$ on the problem $p$. The performance ratio is $z_{p,s}=o_{p,s}/\min\{o_{p,s}:s\in\mathcal{S}\}$ and the cumulative distribution function $\rho_{s}:[1,\infty)\rightarrow[0,1]$ is
$$\rho_{s}(\tau)=\frac{\left|\left\{p\in\mathcal{P}:z_{p,s}\leq\tau\right\}\right|}{n_{p}}.$$
Therefore, the performance profile is presented by depicting the figure of the cumulative distribution function $\rho_{s}$. Note that $\rho_{s}(1)$ refers to the probability that the solver defeat the remaining solvers. The right of the image for $\rho_{s}$ shows the robustness associated with a solver.


\subsection{Analysis of $\gamma_{k}$ and $N$.}

We first discuss the impact of the choice of $\gamma_{k}$ on the algorithmic performance. In other words, we compare MMG-I1 and MMG-I2 by virtue of the number of iterations and function evaluations under the same $N$ value. We would like to emphasize specifically that the results for the number of iterations and gradient evaluations are similar from our computational experiments, so the number of gradient evaluations is ignored in our later analysis. We test them with the values $N=1, 3, 5, 7, 9$. Figures \ref{pk_iter} and \ref{pk_evalf} respectively present the performance profiles from the viewpoint of iterations and function evaluations. As can be seen, the MMG-I algorithm is vulnerable to parameter $\gamma_{k}$ under the same $N$.

\begin{figure}[H]
	\centering  
	\subfigure[$N=1	$]{
		\label{pk1}
		\includegraphics[width=0.3\textwidth]{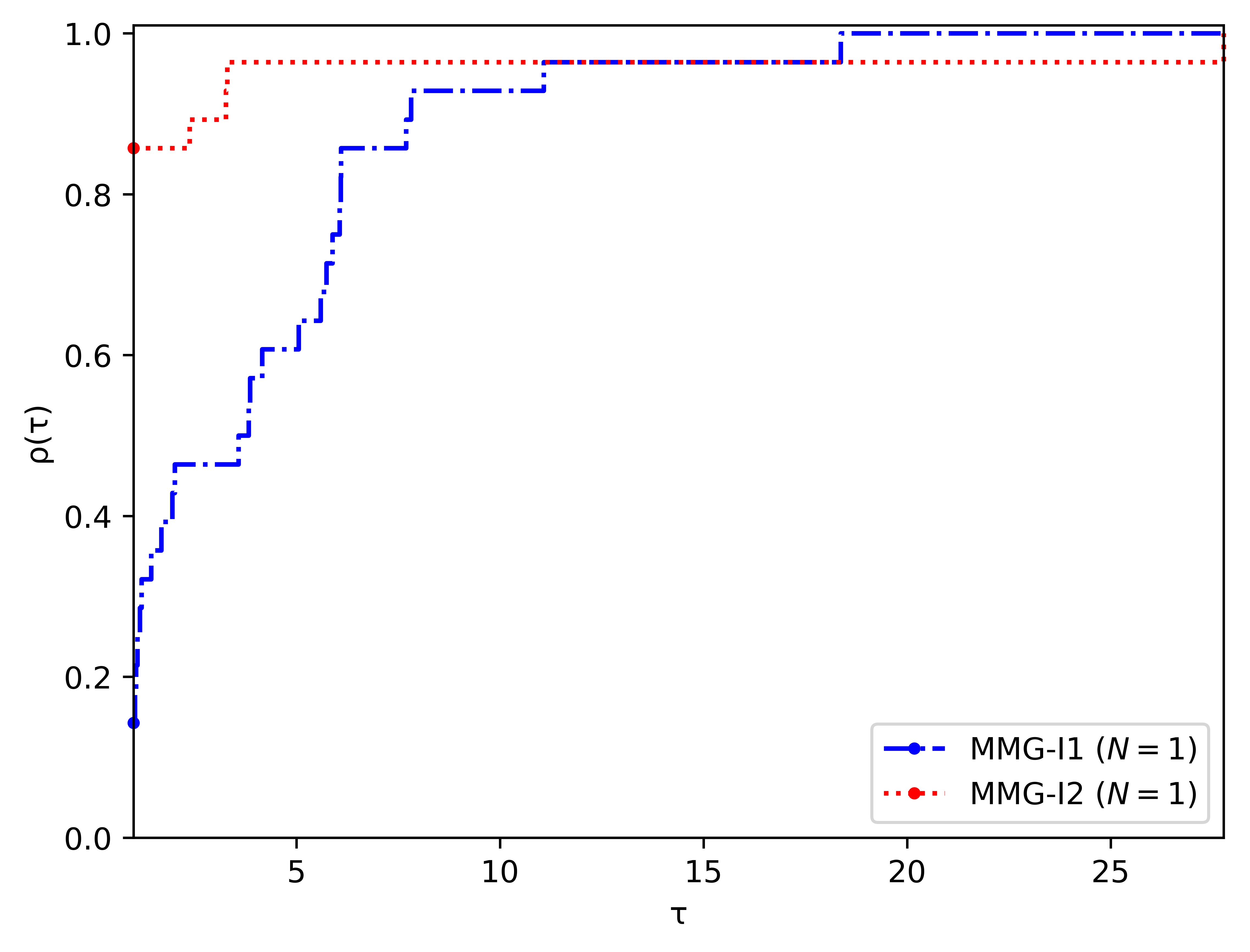}}
	\subfigure[$N=3	$]{
		\label{pk2}
		\includegraphics[width=0.3\textwidth]{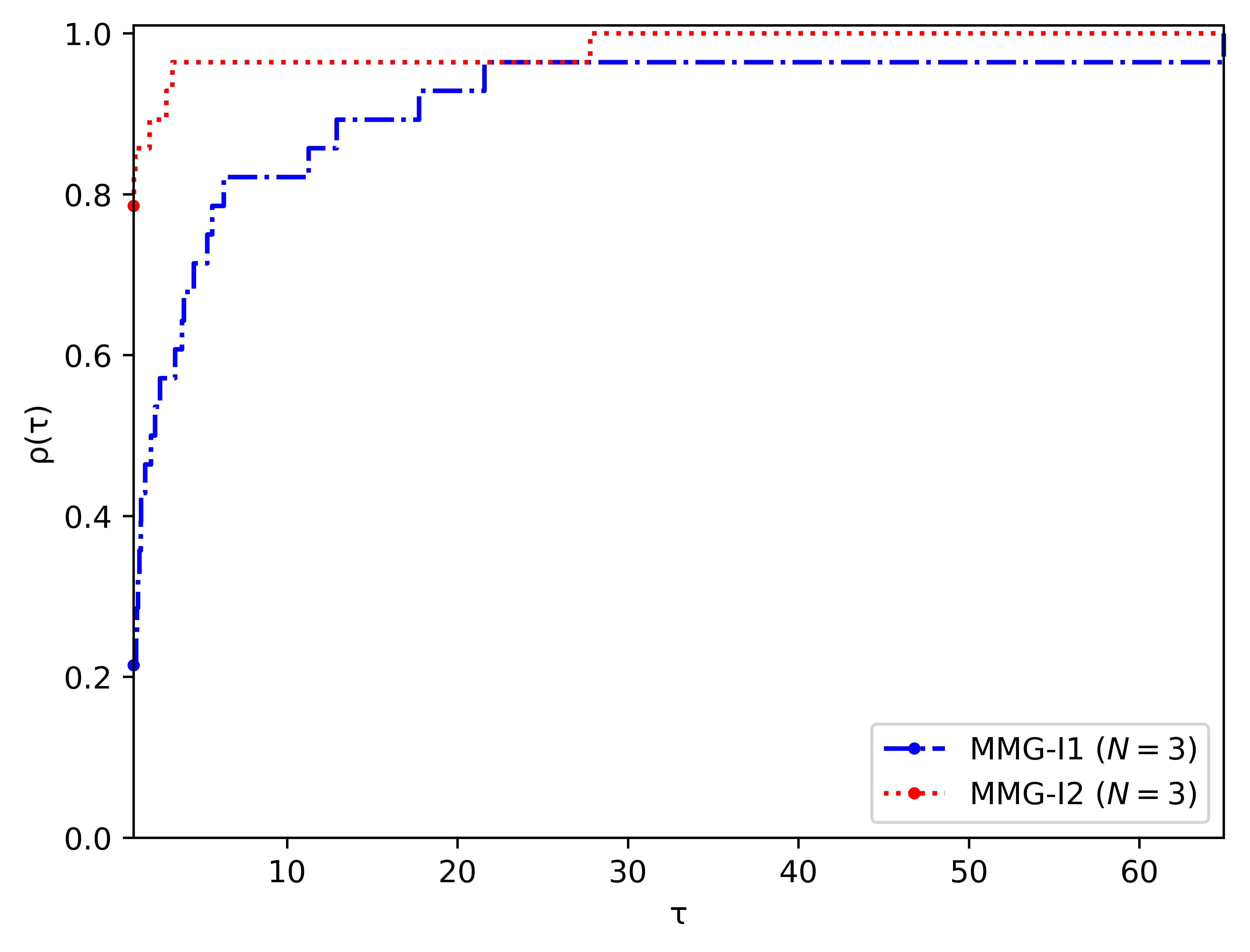}}
	\subfigure[$N=5	$]{
		\label{pk3}
		\includegraphics[width=0.3\textwidth]{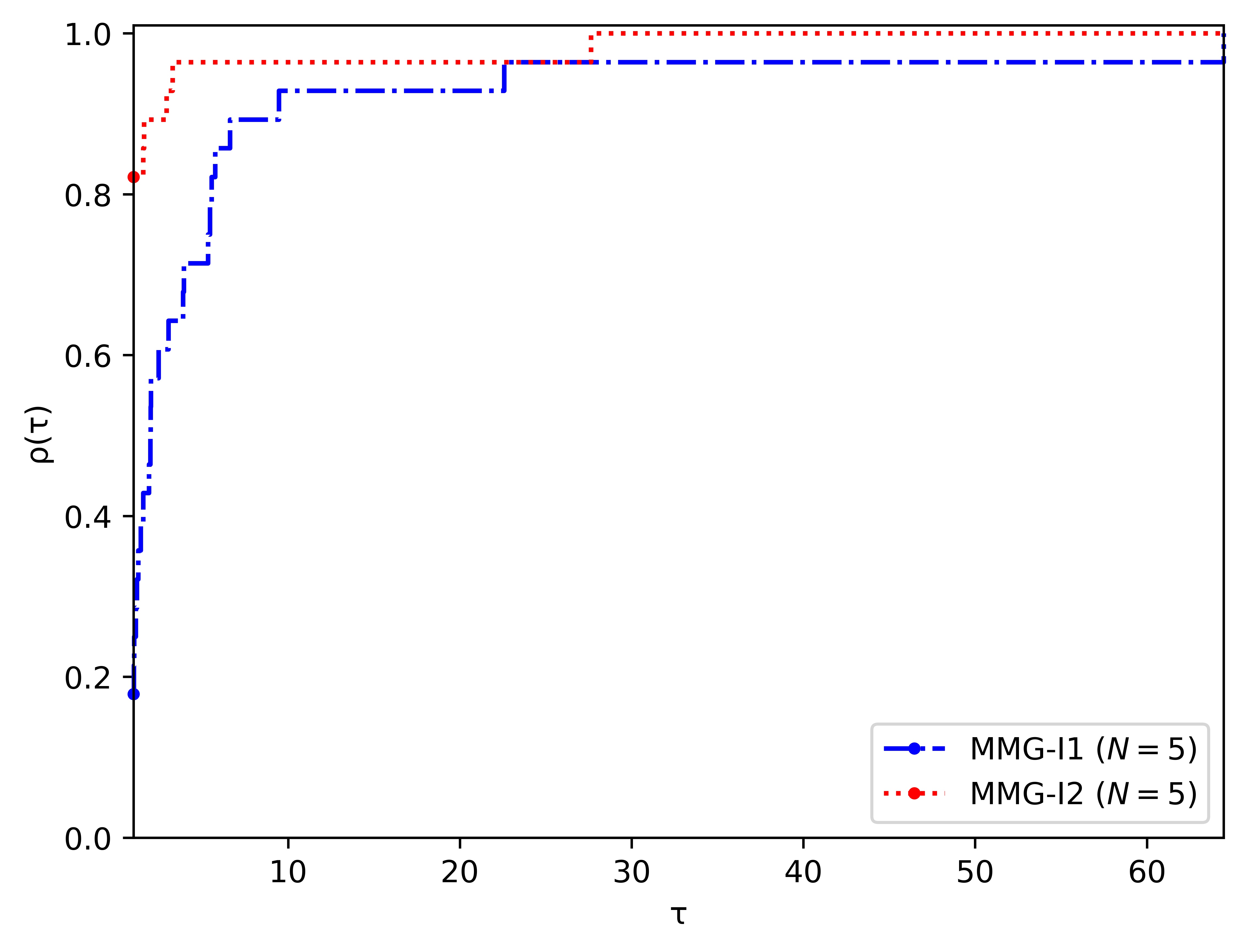}}
	\subfigure[$N=7	$]{
		\label{pk4}
		\includegraphics[width=0.3\textwidth]{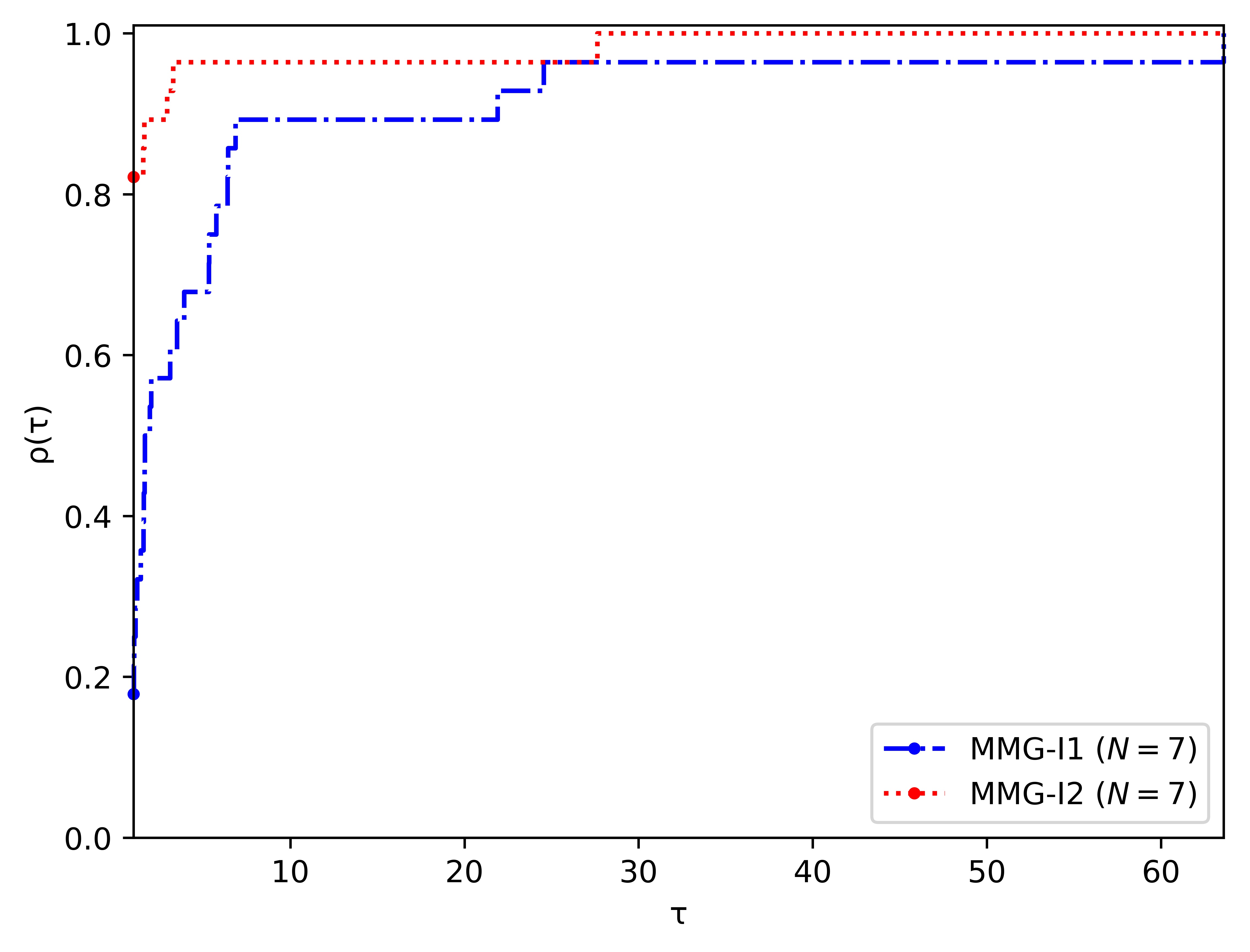}}
	\subfigure[$N=9	$]{
		\label{pk5}
		\includegraphics[width=0.3\textwidth]{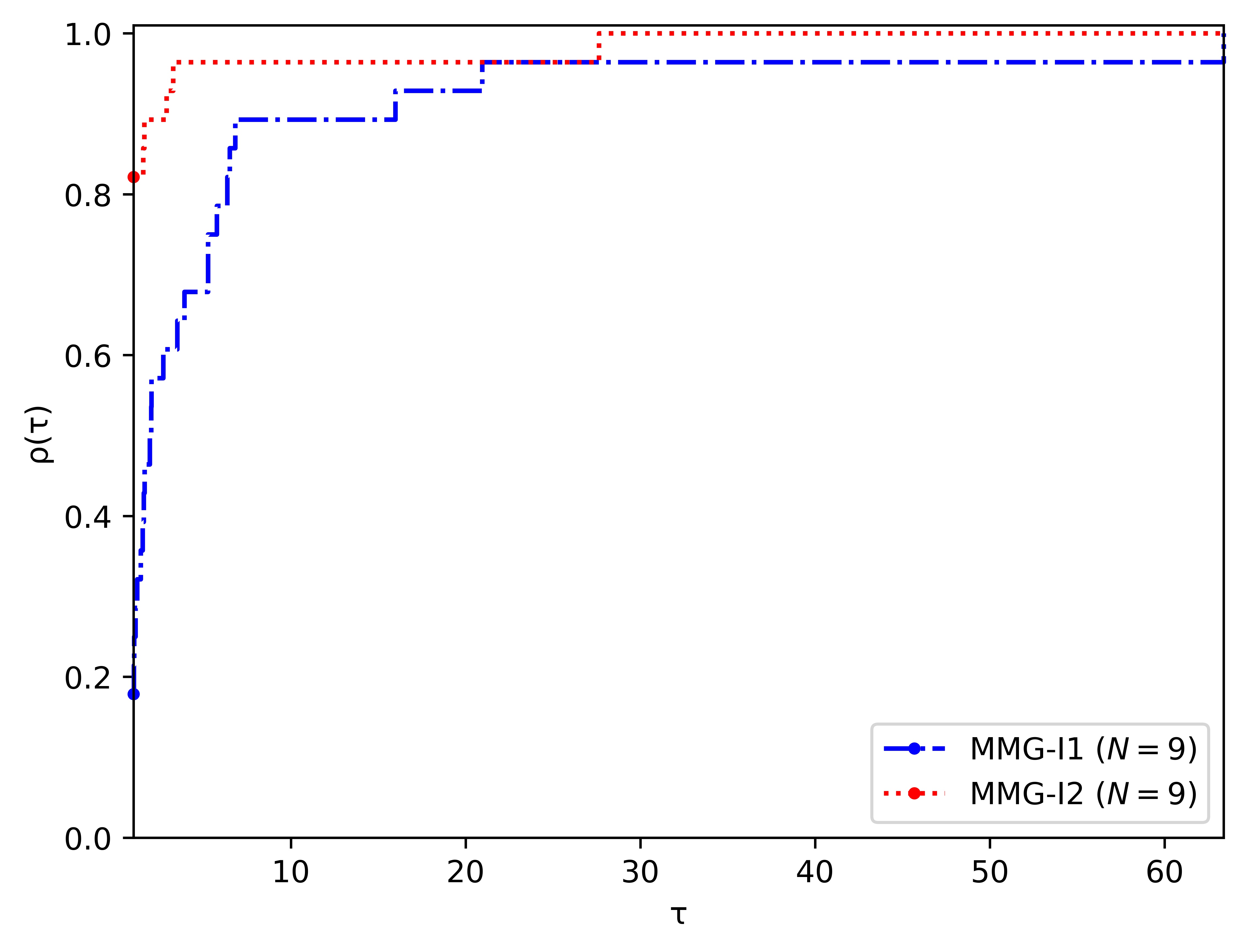}}
	\caption{Performance profiles comparing the number of iterations of MMG-I1 and MMG-I2 under the same $N$.}
	\label{pk_iter}
\end{figure}

\begin{figure}[H]
	\centering  
	\subfigure[$N=1	$]{
		\label{pk11}
		\includegraphics[width=0.3\textwidth]{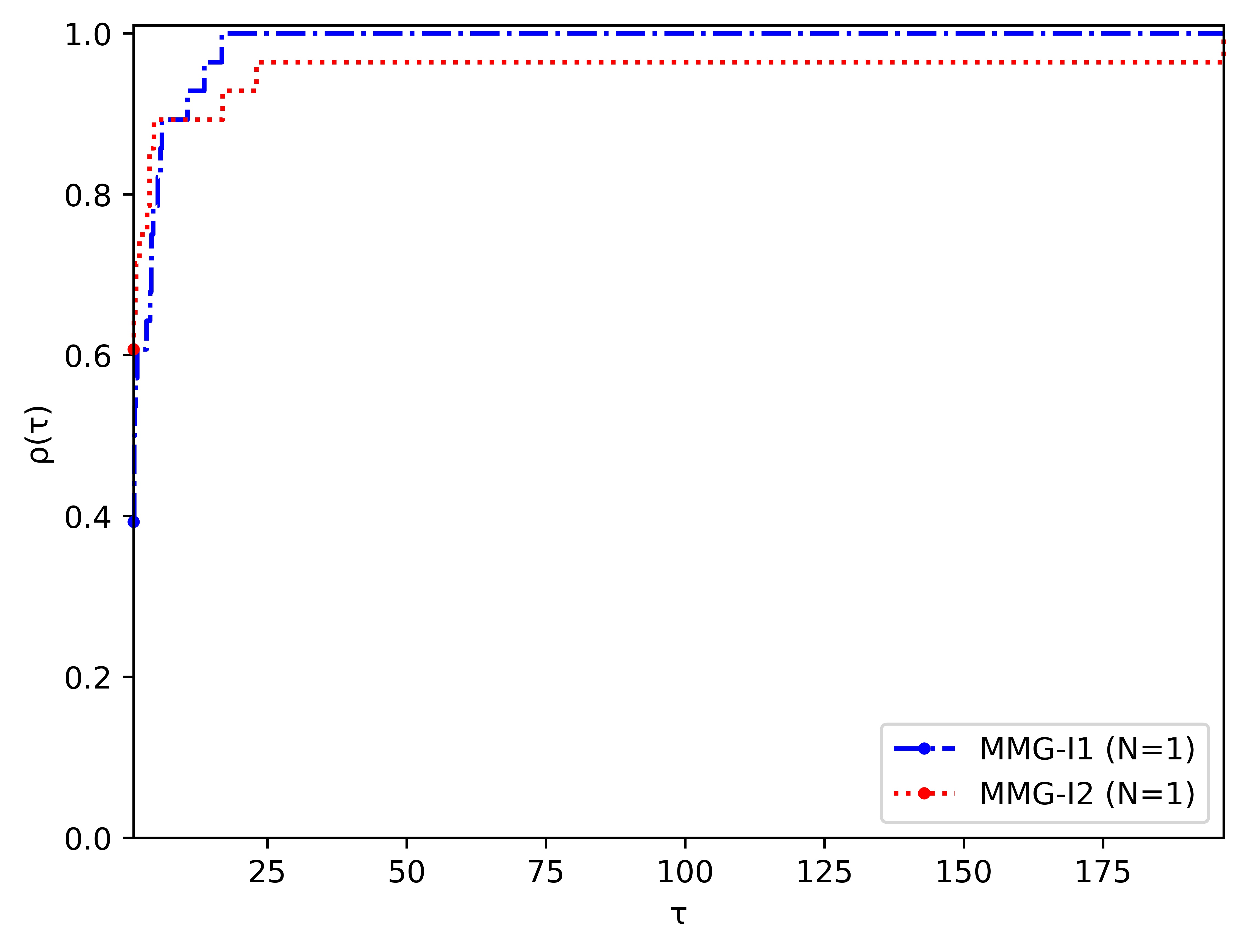}}
	\subfigure[$N=3	$]{
		\label{pk21}
		\includegraphics[width=0.3\textwidth]{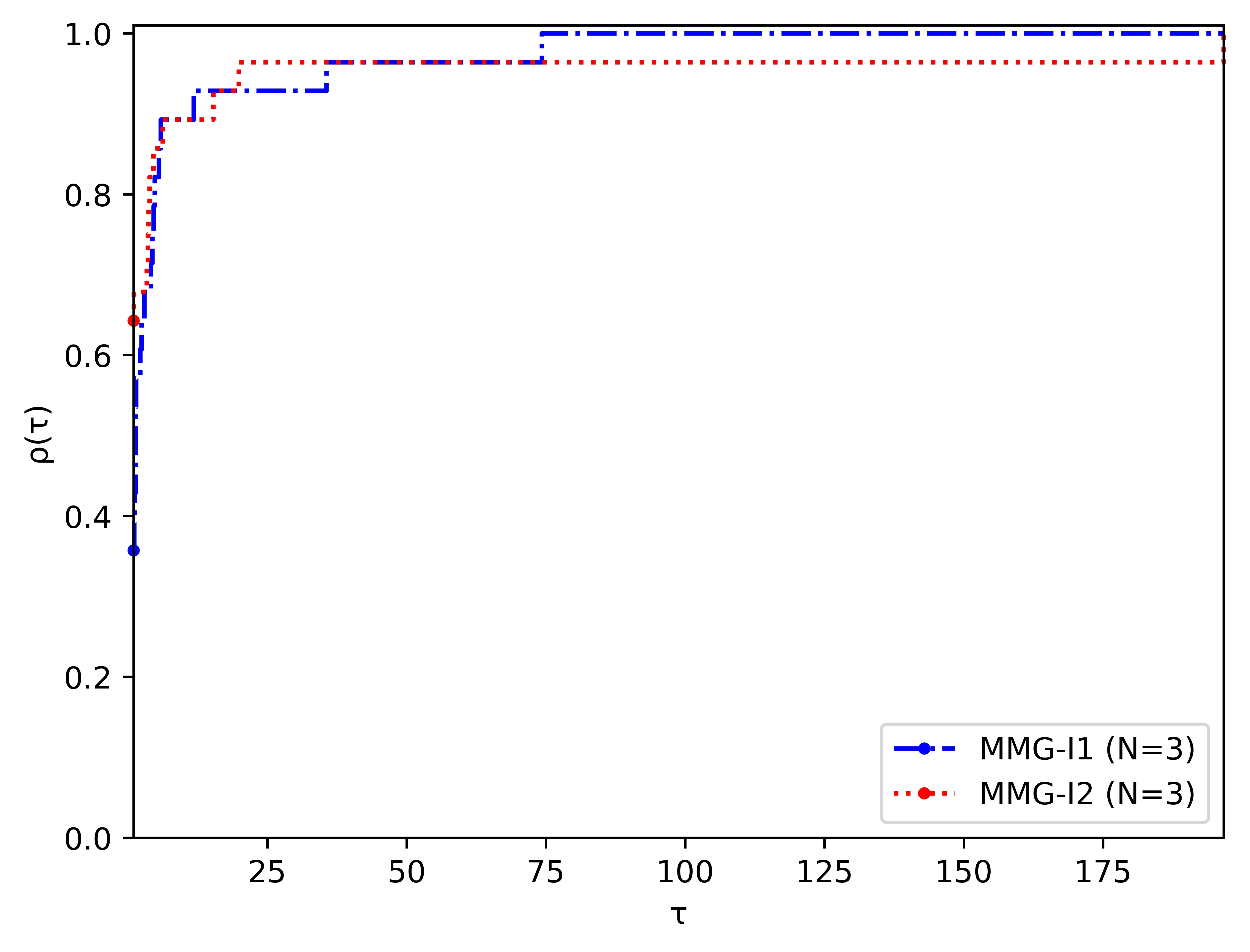}}
	\subfigure[$N=5	$]{
		\label{pk31}
		\includegraphics[width=0.3\textwidth]{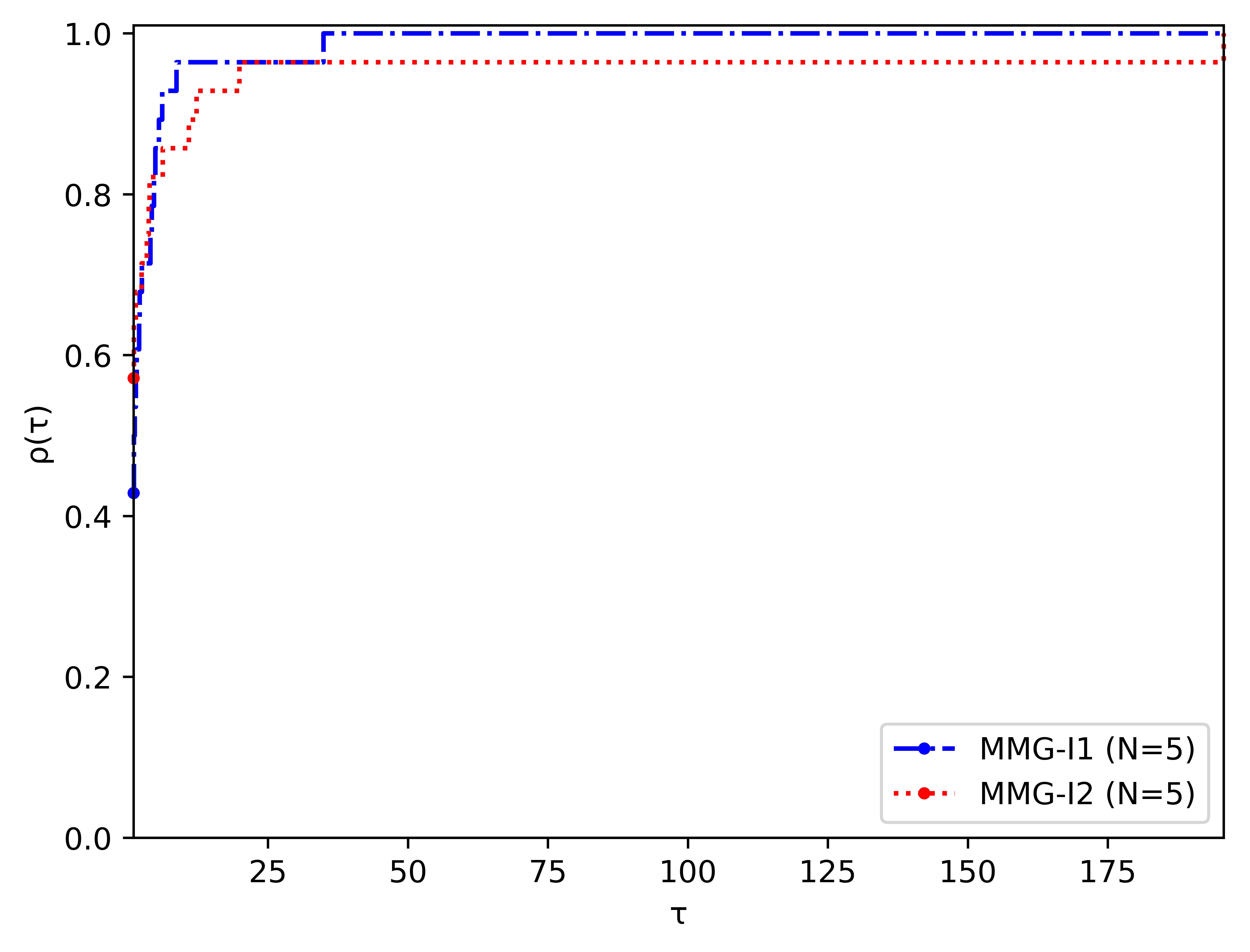}}
	\subfigure[$N=7	$]{
		\label{pk41}
		\includegraphics[width=0.3\textwidth]{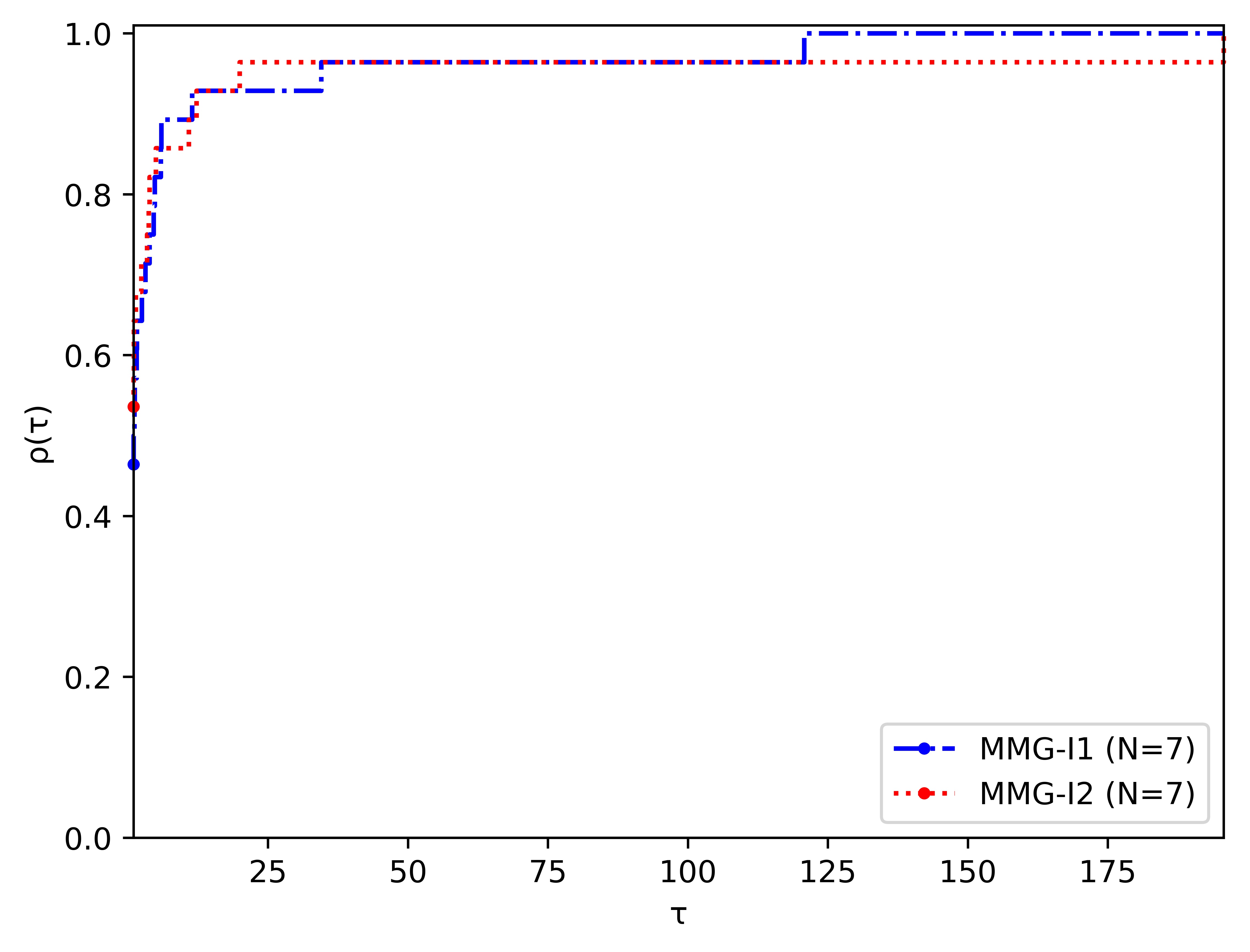}}
	\subfigure[$N=9	$]{
		\label{pk51}
		\includegraphics[width=0.3\textwidth]{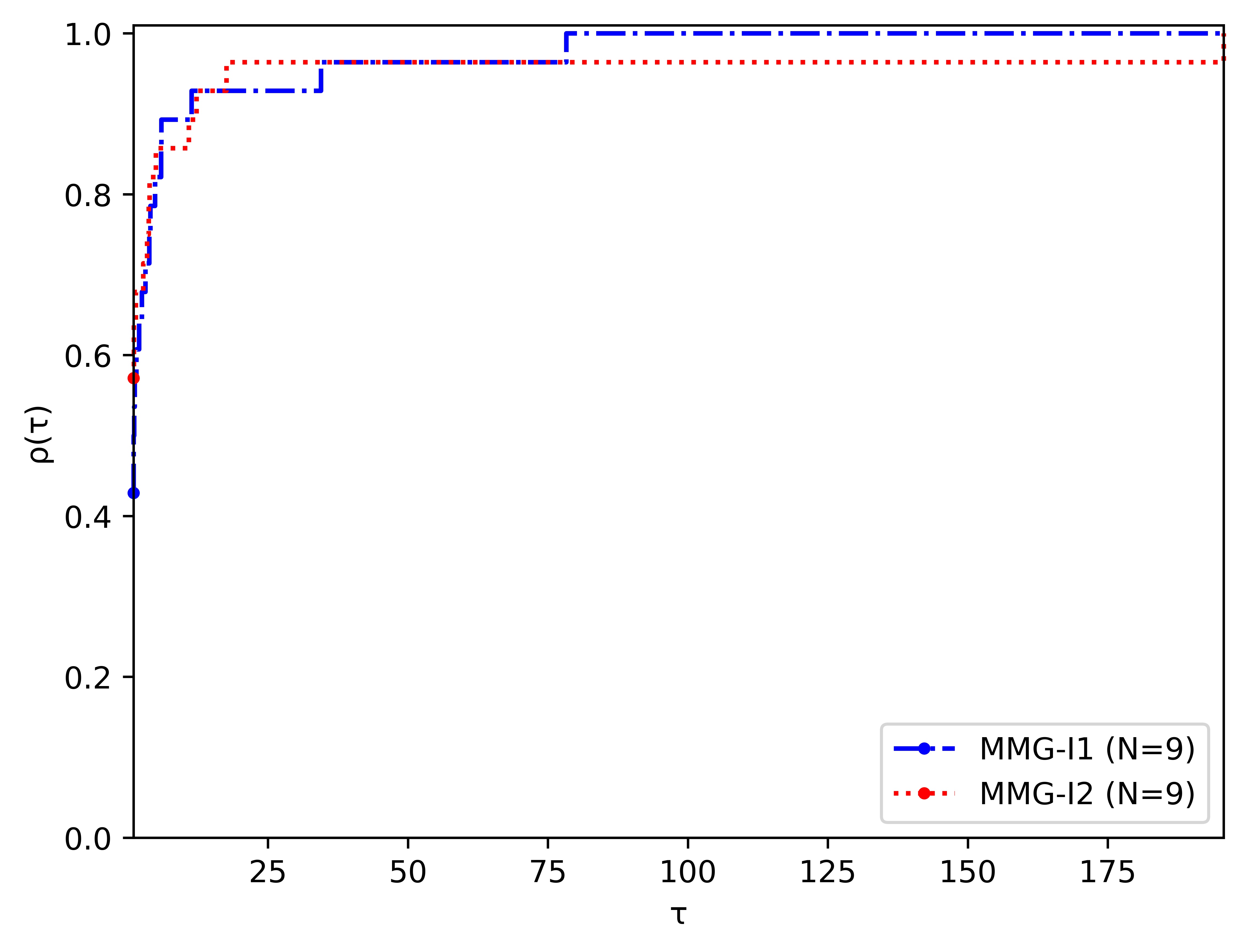}}
	\caption{Performance profiles comparing the number of function evaluations of MMG-I1 and MMG-I2 under the same $N$.}
	\label{pk_evalf}
\end{figure}

To study how a selection of the number of $N$ in MMG-I1 and MMG-I2 affects numerical performance, we respectively test MMG-I1 and MMG-I2 with the values $N=1, 3, 5, 7, 9$. Figures \ref{MMGI1} and \ref{MMGI2} respectively show the performance profiles in terms of iterations and function evaluations for MMG-I1 and MMG-I2. As can be seen, MMG-I1 or MMG-I2 with various $N$ values has different performance evaluations in different ranges. Therefore, the performance of MMG-I1 and MMG-I2 rely on the choice of parameter $N$. Although it is difficult for us to present the best choice from a theoretical point of view, the choice $N=5$ for MMG-I1 and the choice $N=3$ for MMG-I2 are relatively robust in our experiments.

\begin{figure}[H]
	\centering  
	\subfigure[Iterations]{
		\label{Fig.sub.11}
		\includegraphics[width=0.45\textwidth]{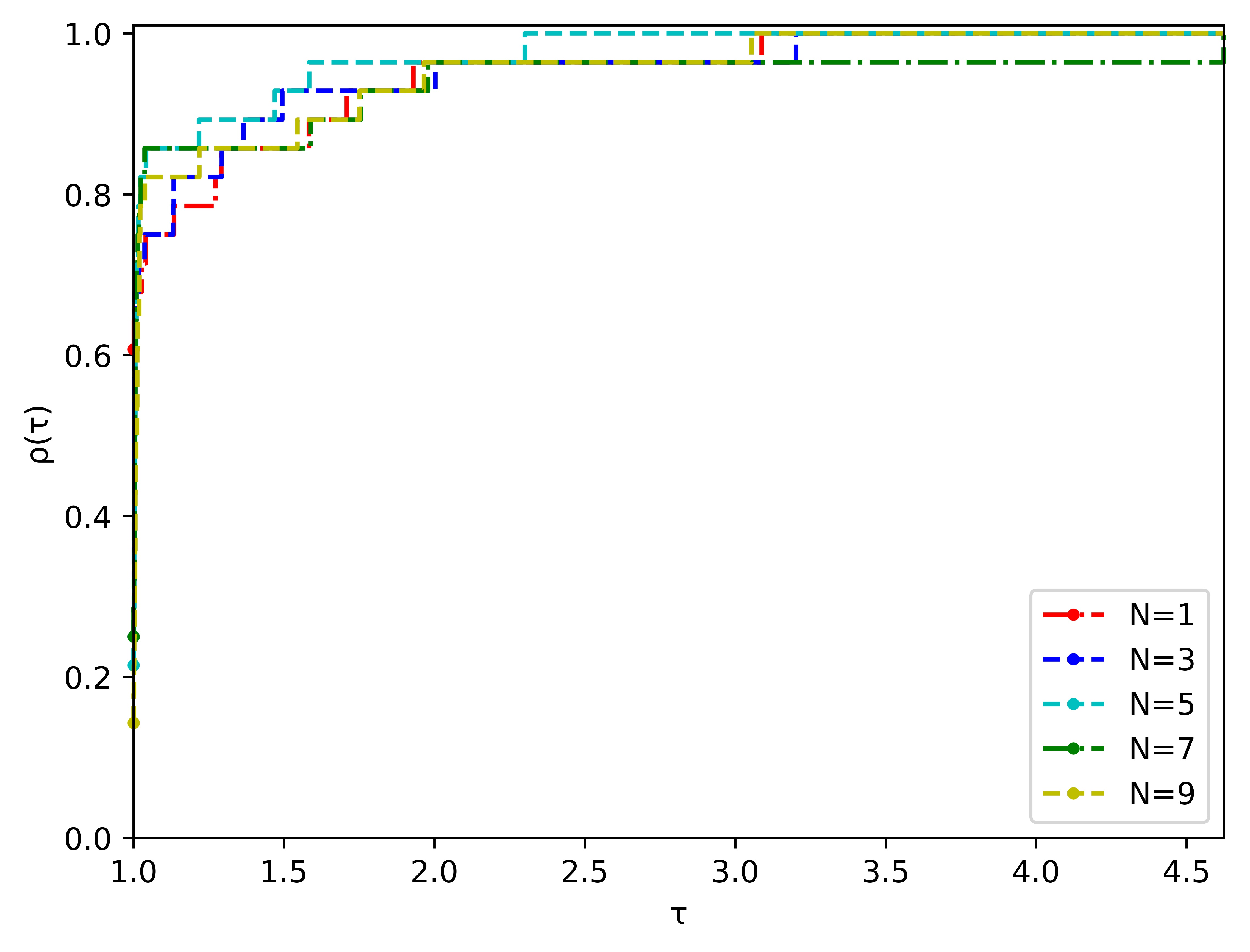}}
	\subfigure[Function evaluations]{
		\label{Fig.sub.12}
		\includegraphics[width=0.45\textwidth]{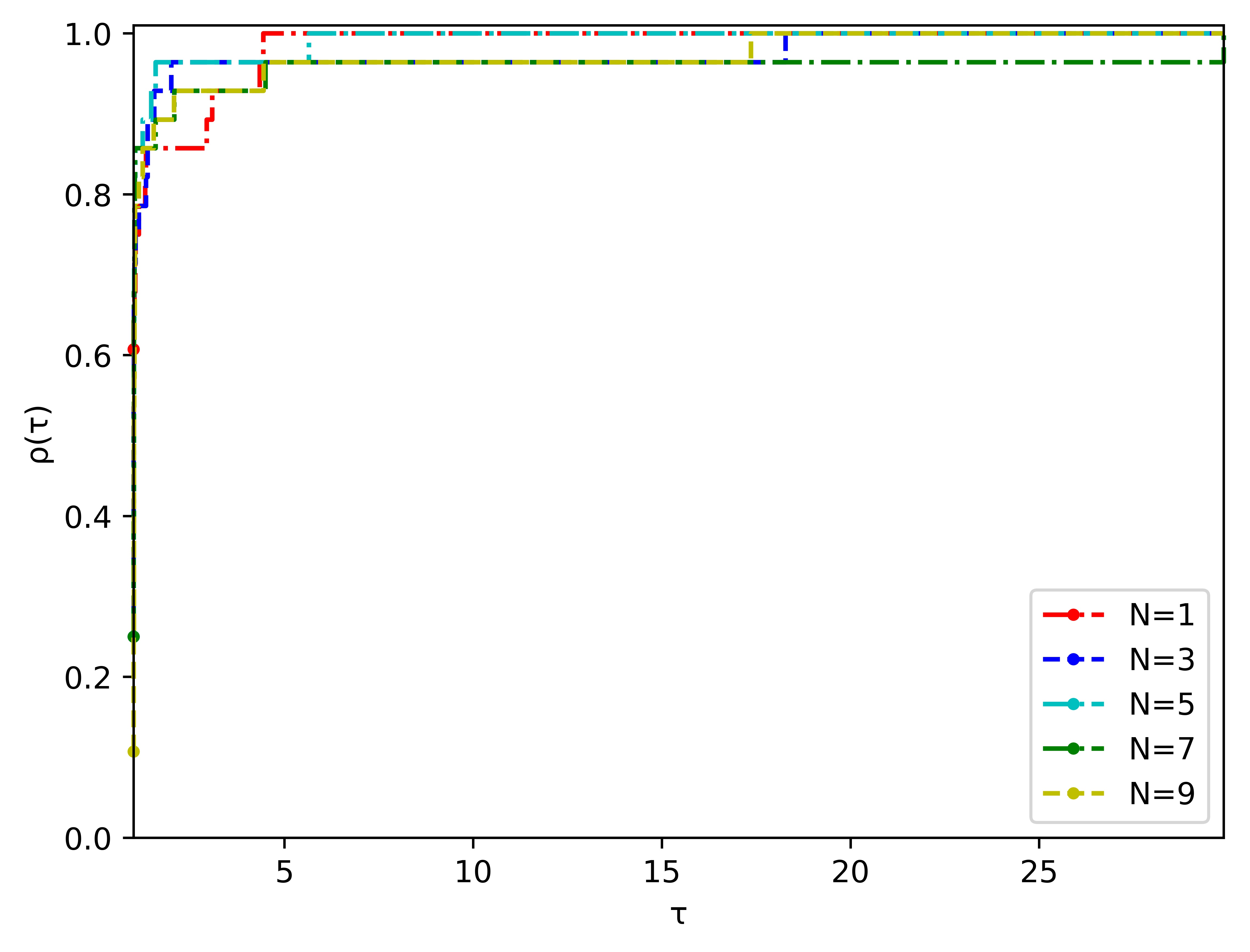}}
	\caption{Performance profiles for MMG-I1 with different $N$.}
	\label{MMGI1}
\end{figure}

\begin{figure}[H]
	\centering  
	\subfigure[Iterations]{
		\label{Fig.sub.1}
		\includegraphics[width=0.45\textwidth]{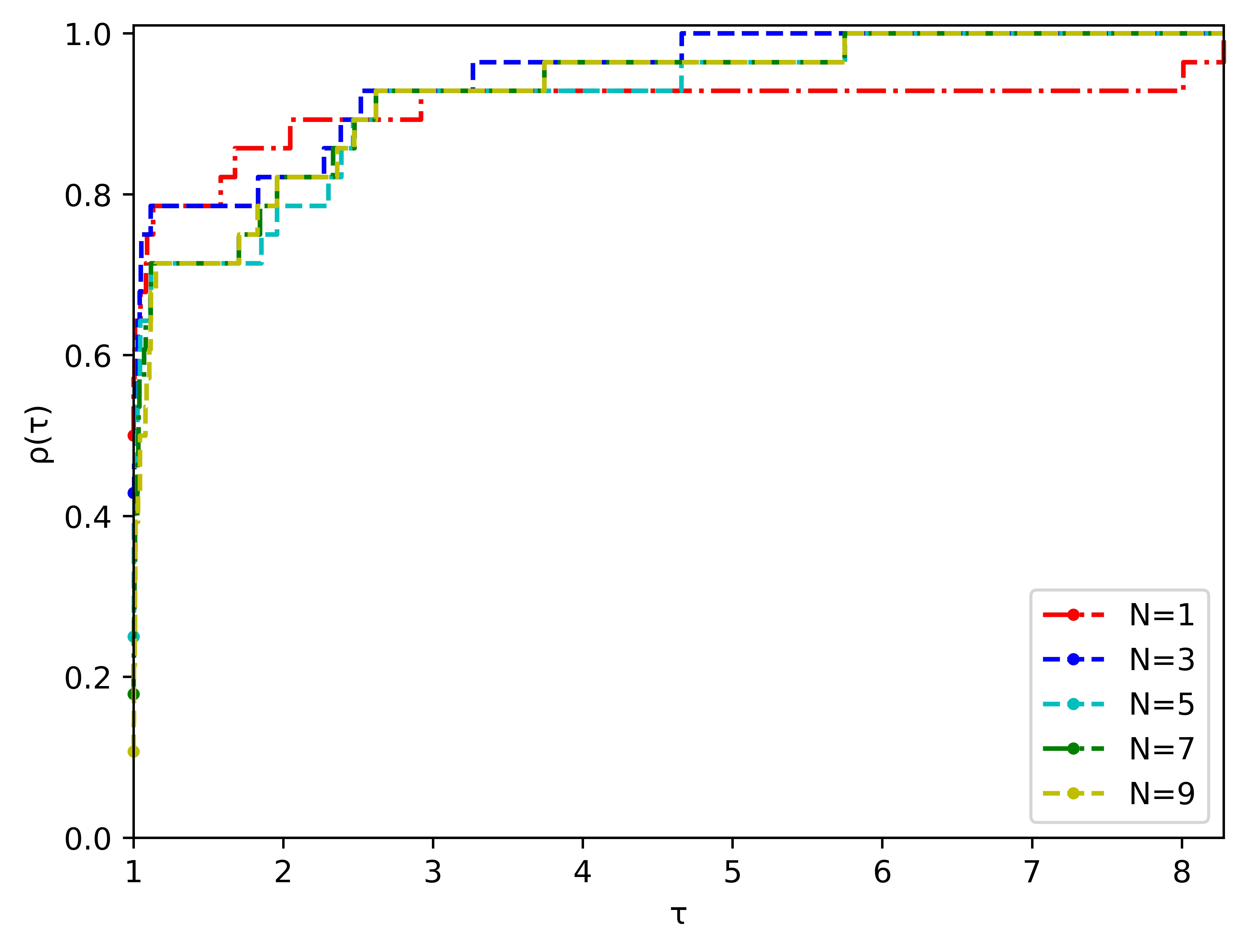}}
	\subfigure[Function evaluations]{
		\label{Fig.sub.2}
		\includegraphics[width=0.45\textwidth]{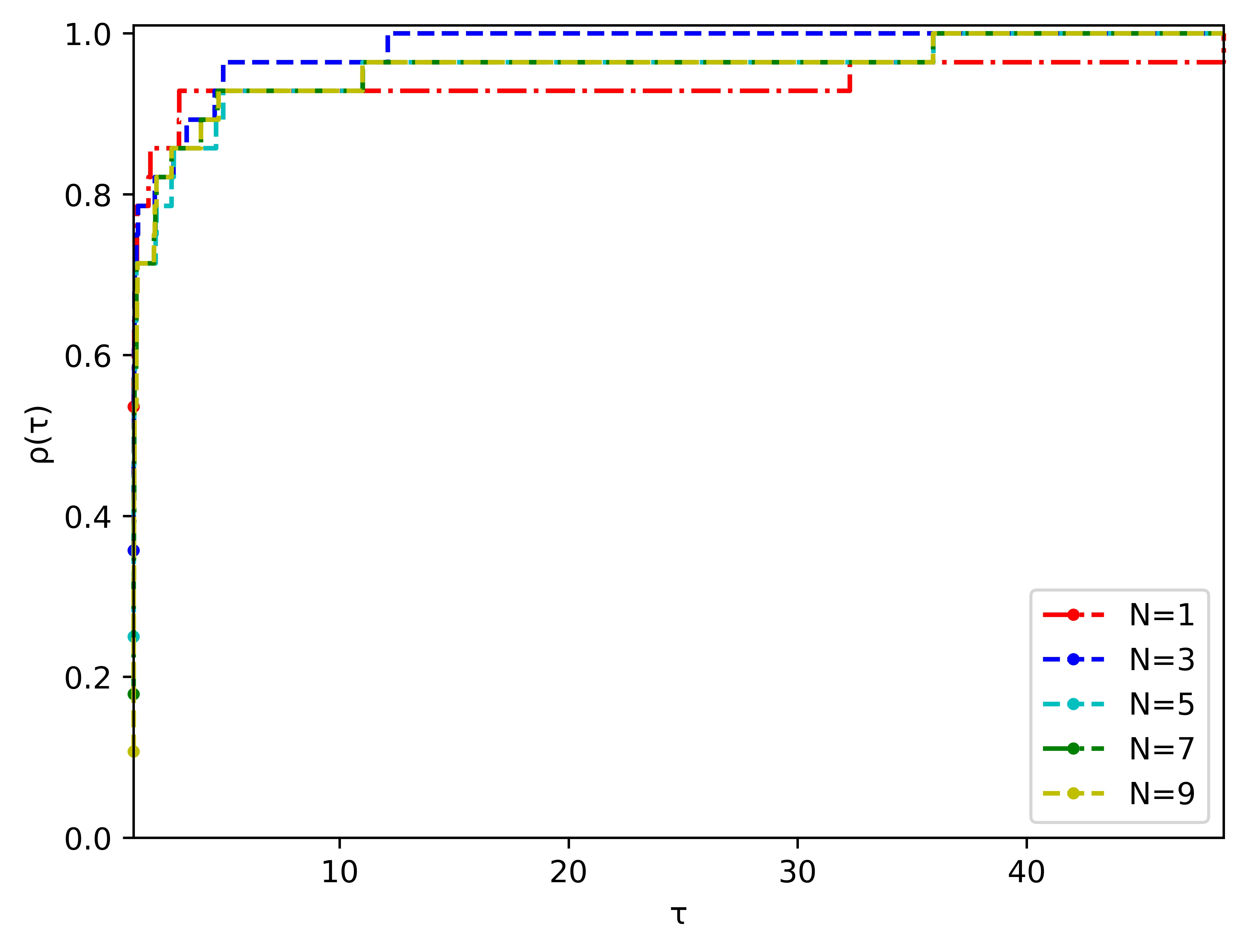}}
	\caption{Performance profiles for MMG-I2 with different $N$.}
	\label{MMGI2}
\end{figure}

\subsection{Analysis of all compared algorithms}\label{subsec.6.2}

In what follows, we will present some performance results of SD, FR, CD, HS, MMG-I1 with $N=5$ and MMG-I2 with $N=3$. Figures \ref{all_iter_tau=None} and \ref{all_evalf_tau=None} respectively give the performance profiles of the number of iterations and function evaluations for the six algorithms. We also plot the performance profiles in smaller intervals so that the difference between them becomes more obvious (see Figures \ref{all_iter_tau=30} and \ref{all_evalf_tau=200}). 

\begin{figure}[H]
	\centering  
	\subfigure[Proformance profile {\rm [0,350]}.]{
		\label{all_iter_tau=None}
		\includegraphics[width=0.45\textwidth]{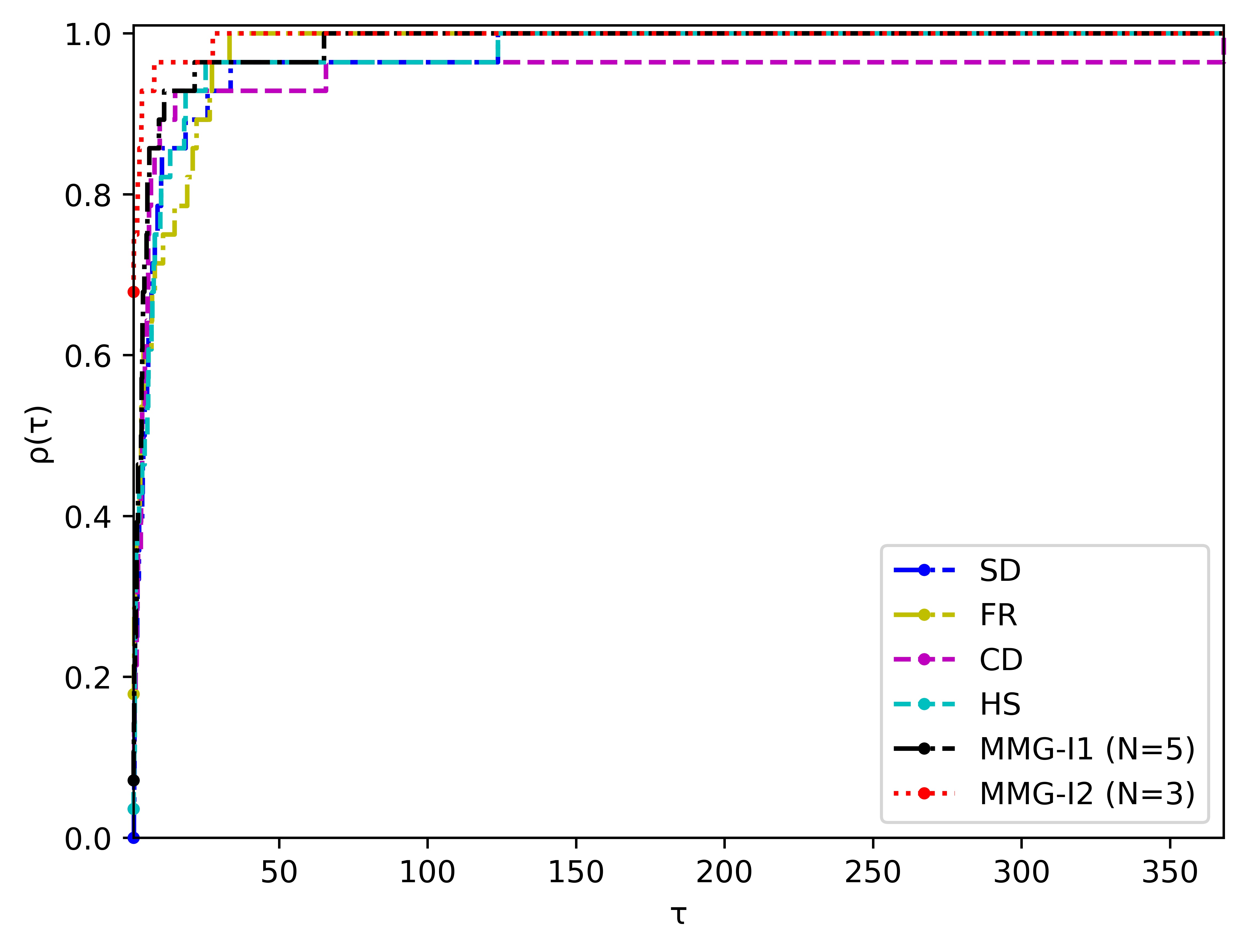}}
	\subfigure[Proformance profile on {\rm [0,30]}.]{
		\label{all_iter_tau=30}
		\includegraphics[width=0.45\textwidth]{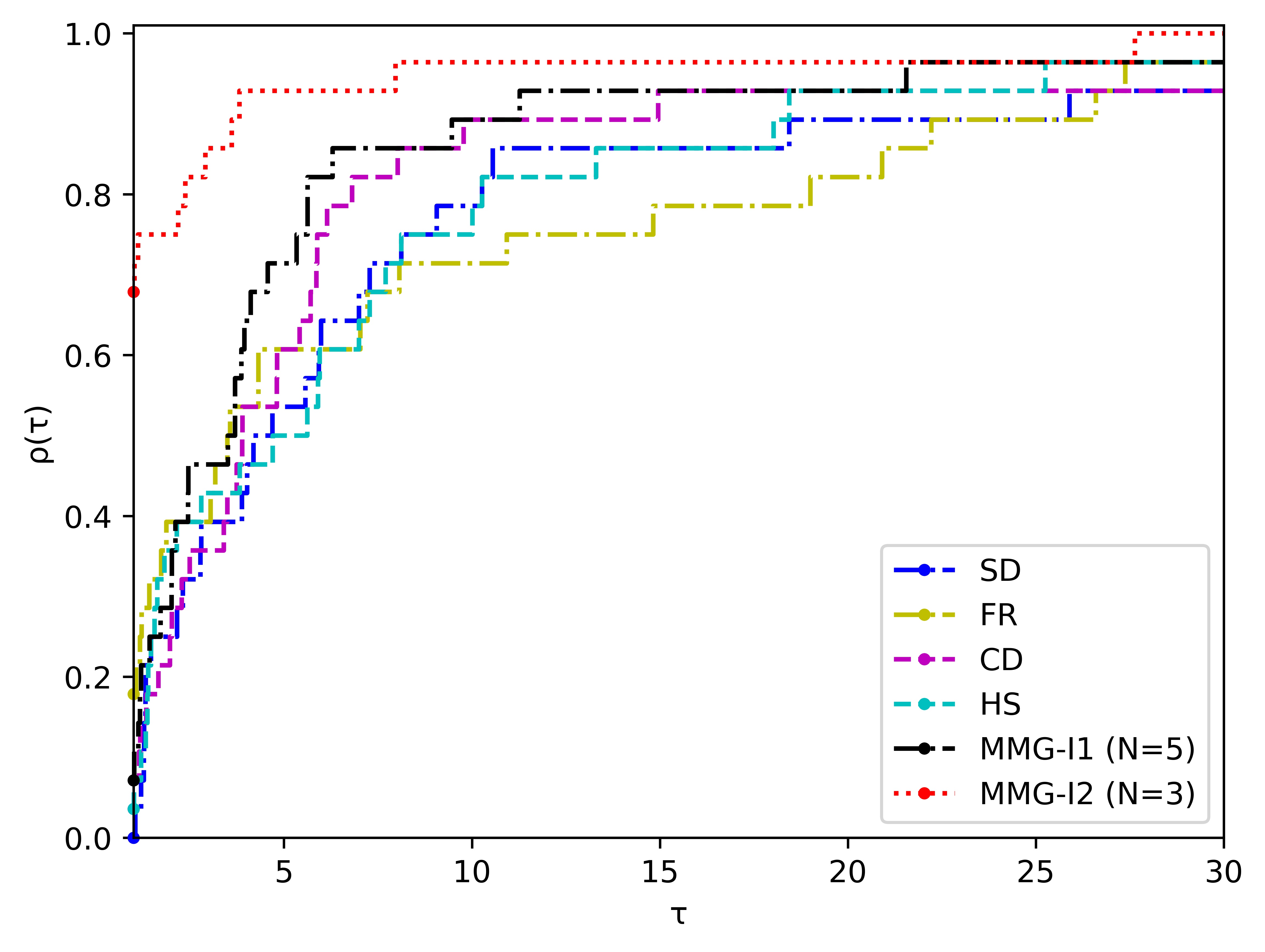}}
	\caption{Performance profiles comparing the number of iterations of all compared algorithms.}
	\label{all_iter}
\end{figure}

For the number of iterations, by Figure \ref{all_iter}, it is clear that MMG-I2 with $N=3$ has the most wins and that the probability that MMG-I2 with $N=3$ is the winner in view of iterations is about 67.86\%. Compared with SD, FR, CD and HS, MMG-I1 with $N=5$ draws our attention with its ability to solve problems, as presented by the performance profile for $\tau>2$ in Figure \ref{all_iter}(b).

\begin{figure}[H]
	\centering  
	\subfigure[Proformance profile {\rm [0,1200]}.]{
		\label{all_evalf_tau=None}
		\includegraphics[width=0.45\textwidth]{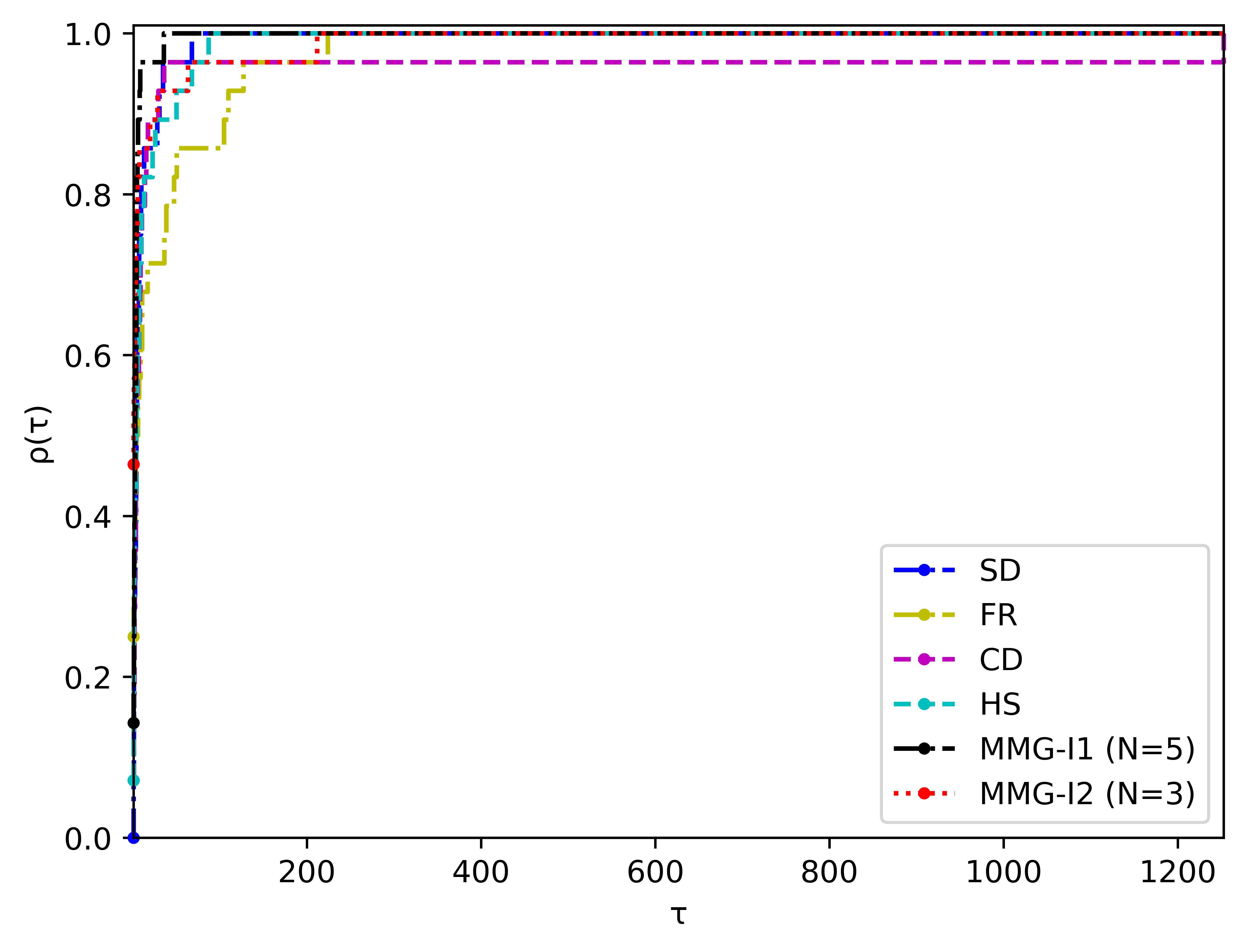}}
	\subfigure[Proformance profile on {\rm [0,150]}.]{
		\label{all_evalf_tau=200}
		\includegraphics[width=0.45\textwidth]{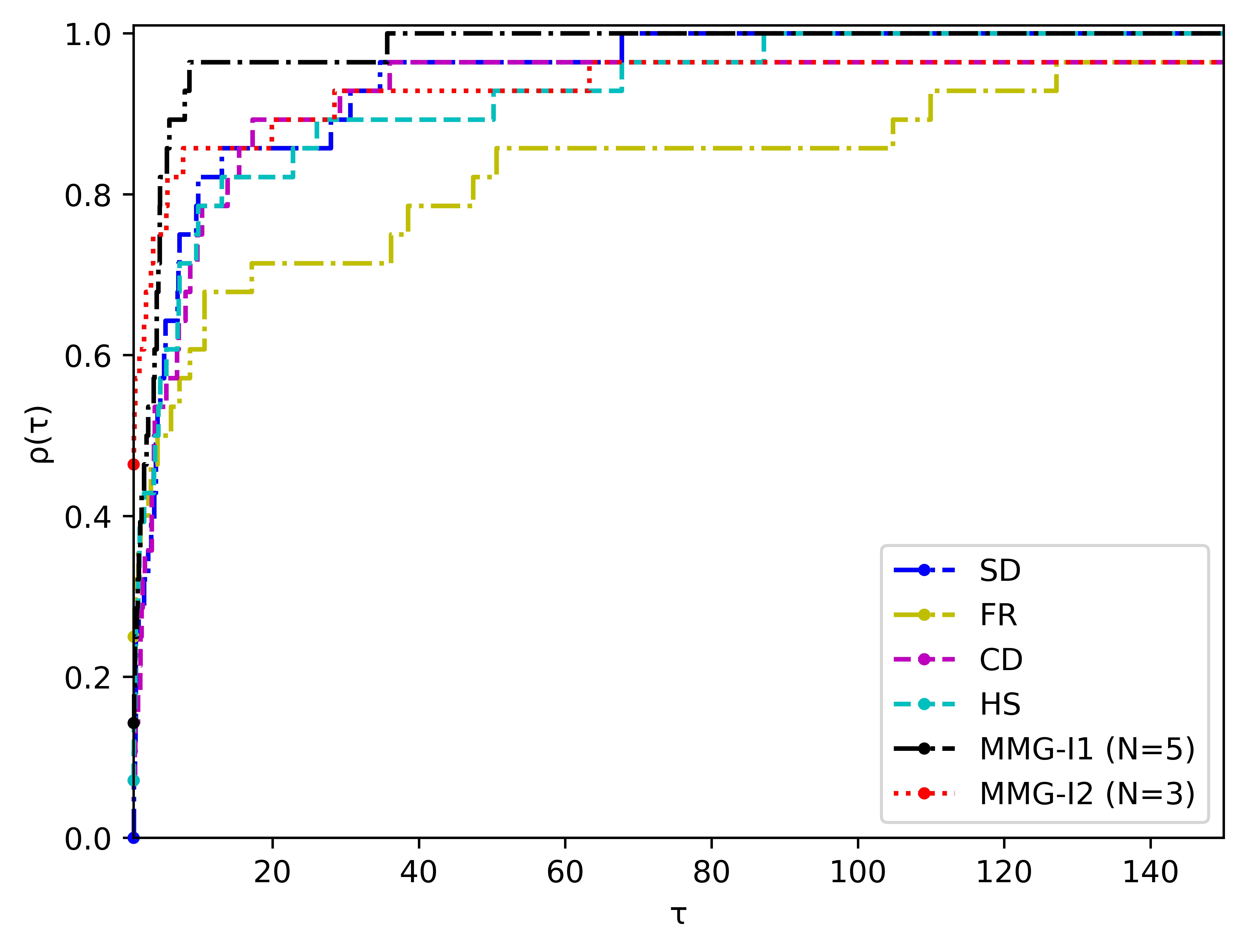}}
	\caption{Performance profiles comparing the number of function evaluations of all compared algorithms.}
	\label{mmgi2}
\end{figure}

For the number of function evaluations, from Figure \ref{mmgi2}, the probability that MMG-I2 with $N=3$ is the winner in function evaluations is about 46.64\%. MMG-I1 with $N=5$ shows its better performance when $\tau>5$. However, when $\tau>20$, the performance of MMG-I2 with $N=3$ is slightly inferior than that of other algorithms except FR in view of function evaluations. 

By Figure \ref{mmgi2}, a point of interest is that though MMG-I2 with $N=3$ consumes smaller iterations than the competitors, its performance about the number of function evaluations is slightly inferior than SD, CD, HS and MMG-I1 with $N=5$ in the interval $[25, 220]$. A reasonable explanation for this phenomenon is that it takes more evaluation to find a reasonable stepsize along the direction $d^{k}$ in the stepsize-I strategy. Therefore, the stepsize strategy also affects the practical behavior of MMG-I2. 

So far, we only care about the speed of convergence of our method. Next, we will focus on comparing the performance of these algorithms in the aspect of generating Pareto fronts (POFs). To this aim, we use the so-called \emph{purity} \cite{C_d2011} and \emph{spacing} \cite{S_f1995,G_m2008} metrics presented below. Let $F_{p,s}$ be a solution set found by solver $s\in\mathcal{S}$ on problem $p\in\mathcal{P}$. Suppose that $F_{p}$ is an approximation of the true POF for problem $p$, computed by first obtaining $\cup_{s\in\mathcal{S}}F_{ps}$ and then saving non-dominated points of this set.
\begin{itemize}
	\item \emph{Purity metric}. The ratio $\bar{\psi}_{ps}$ is defined by
	$$\bar{\psi}_{ps}=\frac{|F_{ps}\cap F_{p}|}{|F_{p}|}.$$
	In order to discuss the purity metric using the performance profiles, we let $\psi_{ps}=1/\bar{\psi}_{ps}$. As reported in \cite{C_d2011}, the algorithms are compared in pairs if we use the purity metric.
	\item \emph{Spacing metric}. The spacing metric is defined by
	$$Q_{p,s}=\left(\frac{1}{|{F}_{p,s}|-1}\sum_{l=1}^{|{F}_{p,s}|}(d^{*}-d_{l})^{2}\right)^{1/2},$$
	where 
	$$d_{l}=\min_{k}\left\{\sum_{i=1}^{m}|F_{i}(a_{l})-F_{i}(a_{k})|\right\},\quad a_{l},a_{k}\in{F}_{p,s},\quad l,k=1,2,\ldots,|{F}_{p,s}|,$$
	$d^{*}$ is the average value of all $d_{l}$. The lower values $Q_{p,s}$ indicate better performance.
\end{itemize}

Figure \ref{purity} depicts the performance profiles associated with the purity metric. As we can see, MMG-I1 with $N=5$ outperforms SD and HS, respectively. There is no significant difference between MMG-I1 with $N=5$ and CD.
MMG-I2 with $N=3$ outperforms than that of other compared algorithms. To sum up, the purity performance profiles show that the previous multi-step information makes the algorithm more effective. However, the superior performance of MMG-I2 with $N=3$ over MMG-I1 with $N=5$ may be surprising.

\begin{figure}[H]
	\centering  
	\subfigure{
		\label{sub.1}
		\includegraphics[width=0.3\textwidth]{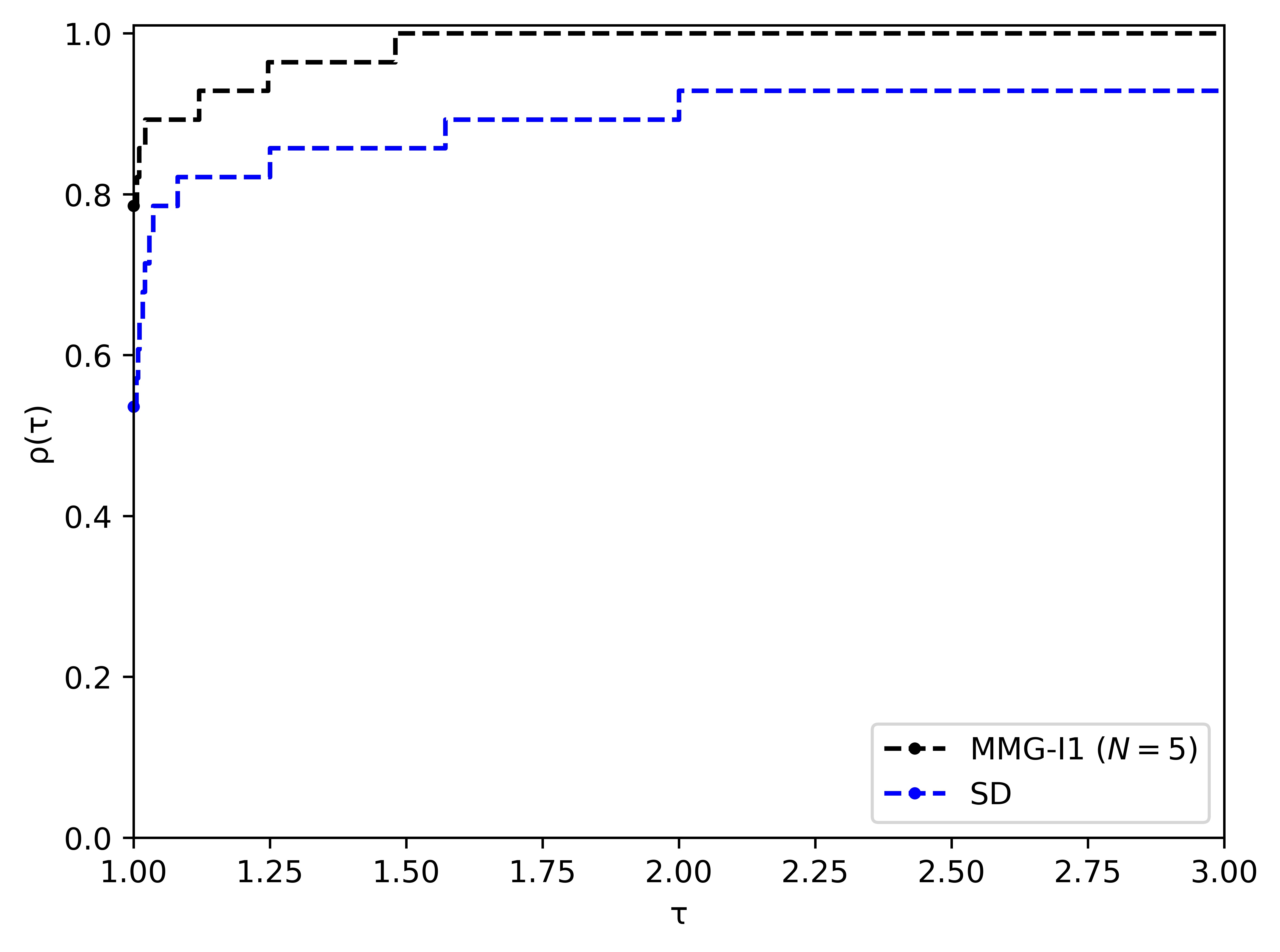}}
	\subfigure{
		\label{sub.2}
		\includegraphics[width=0.3\textwidth]{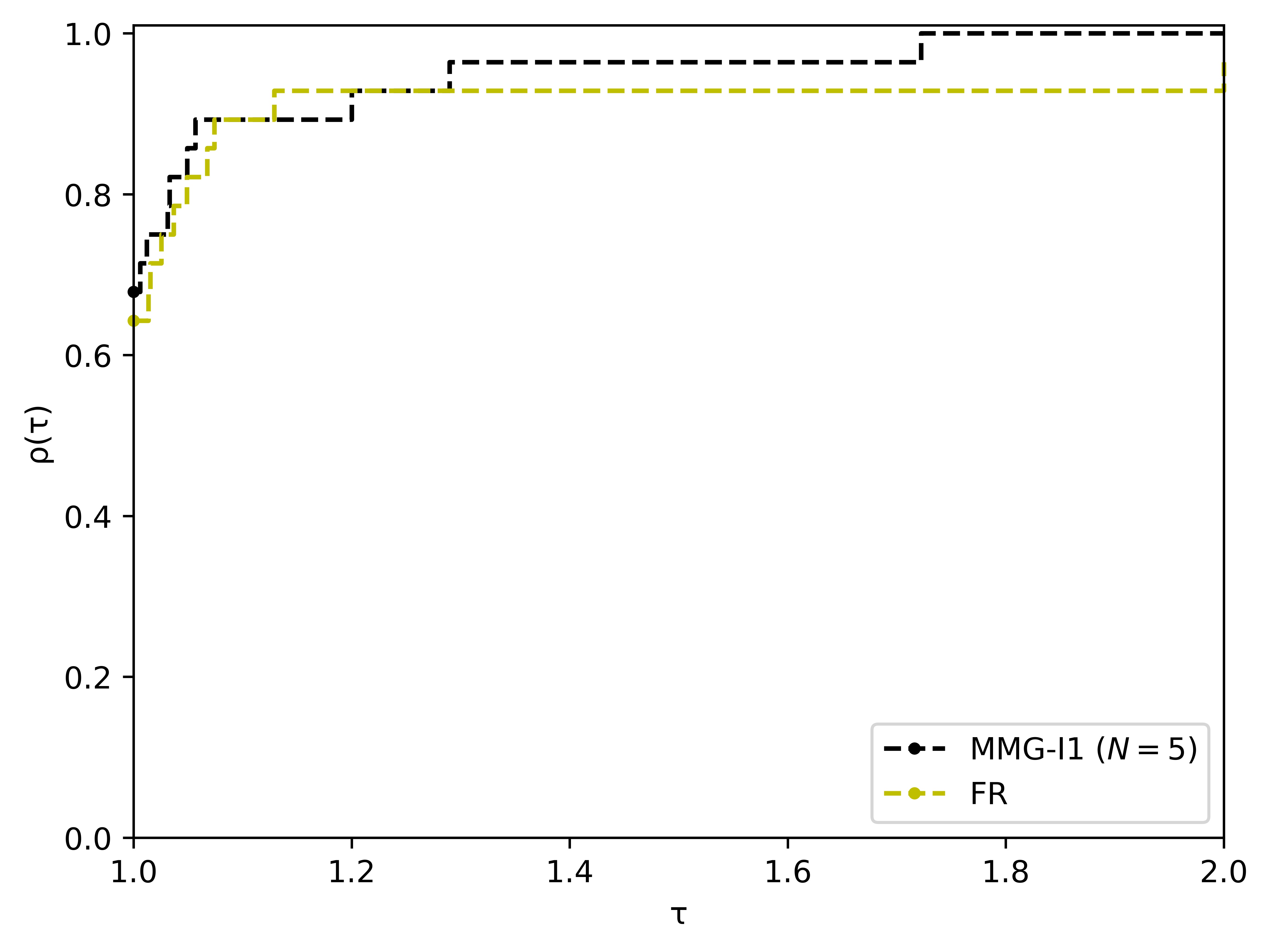}}
	\subfigure{
		\label{sub.3}
		\includegraphics[width=0.3\textwidth]{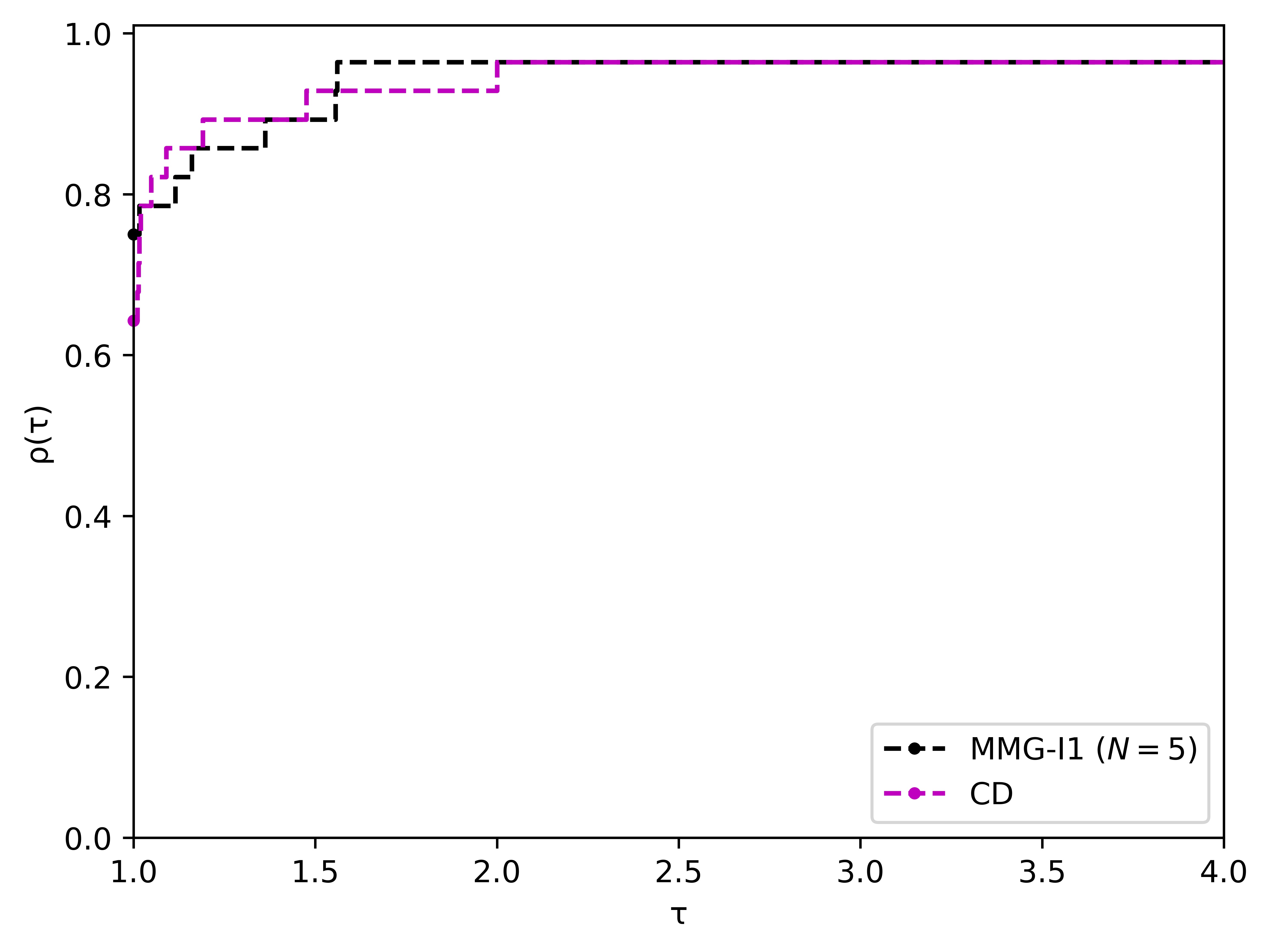}}
	\subfigure{
		\label{sub.4}
		\includegraphics[width=0.3\textwidth]{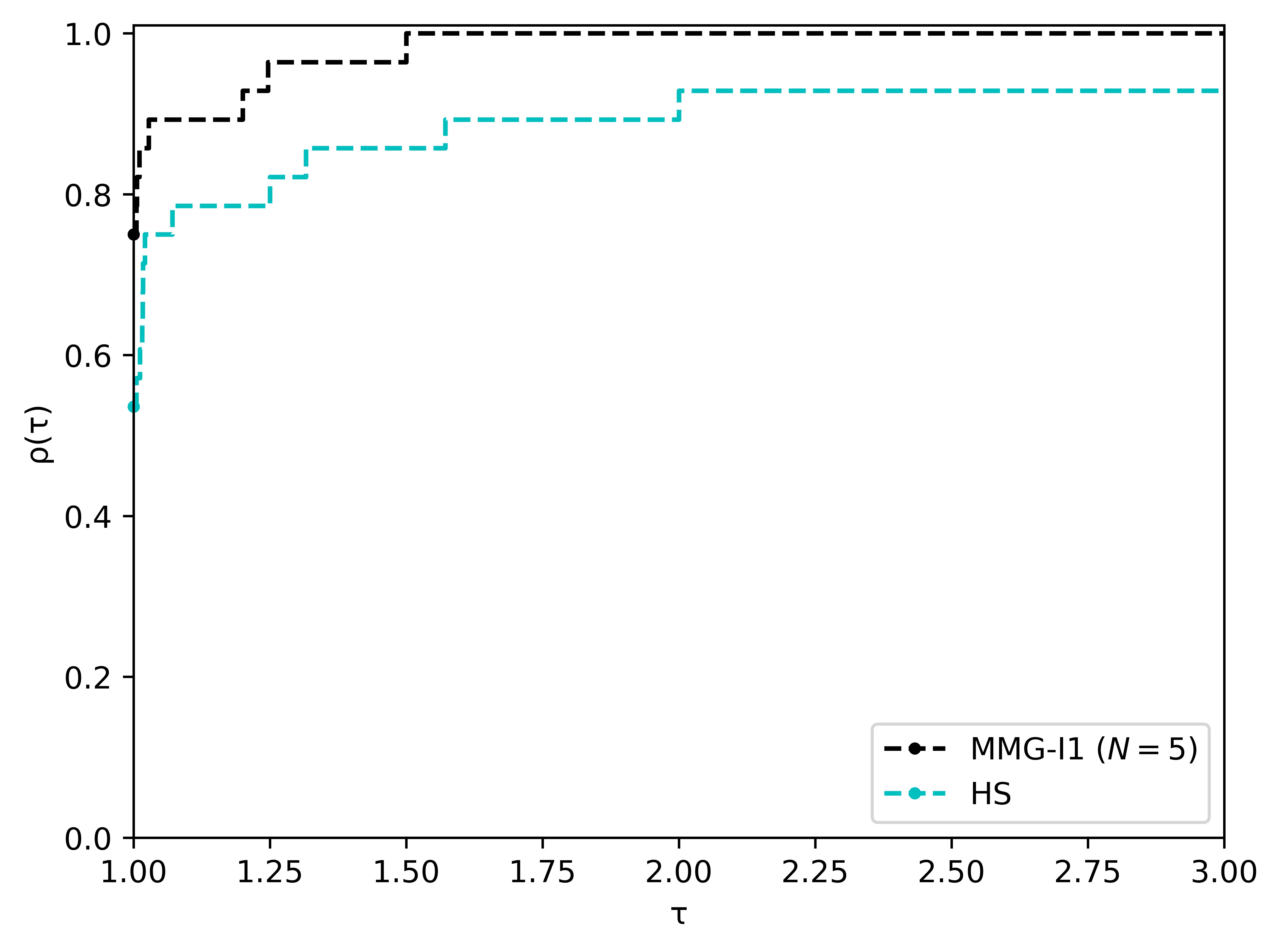}}
	\subfigure{
		\label{sub.5}
		\includegraphics[width=0.3\textwidth]{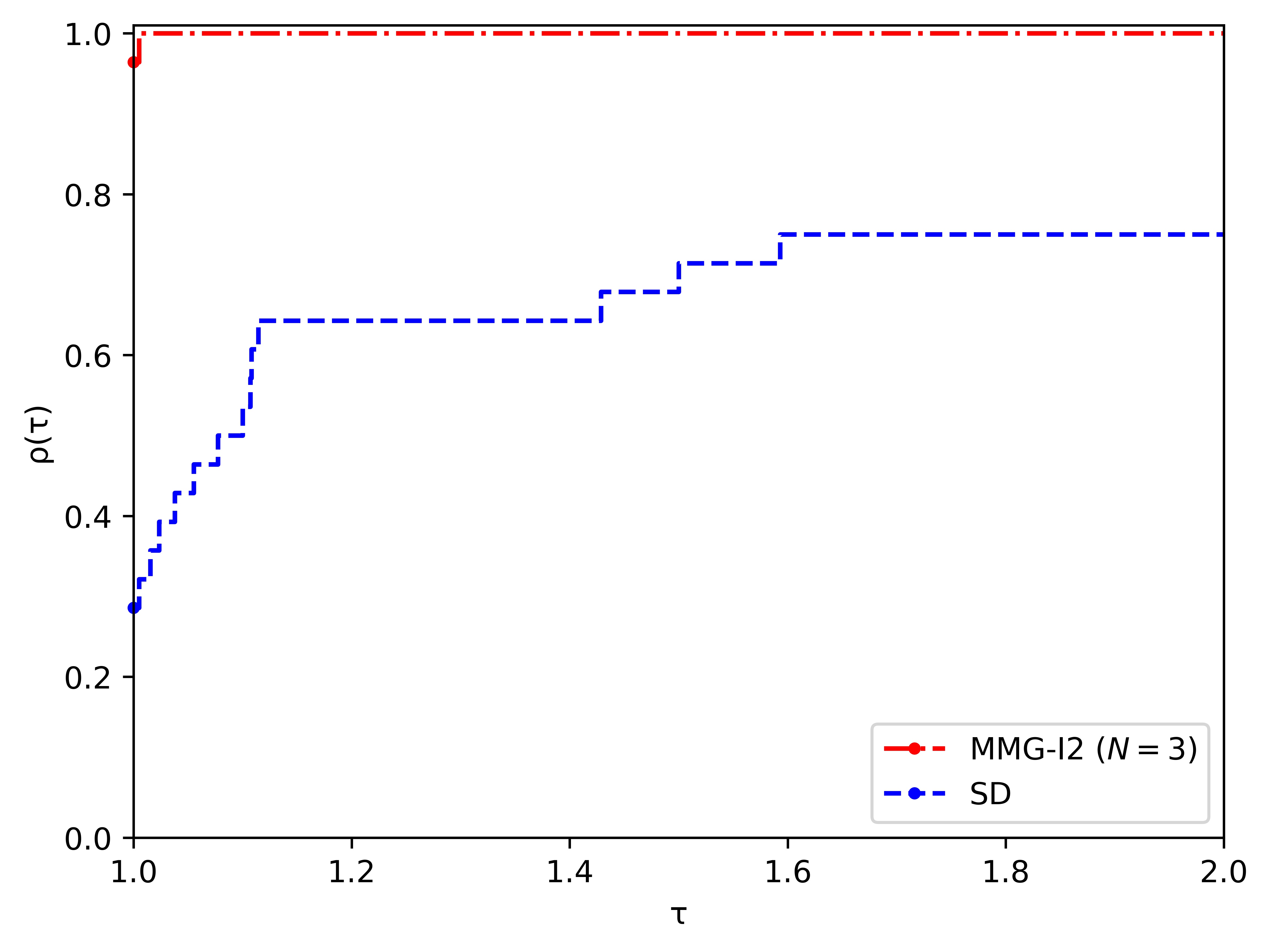}}
	\subfigure{
		\label{sub.6}
		\includegraphics[width=0.3\textwidth]{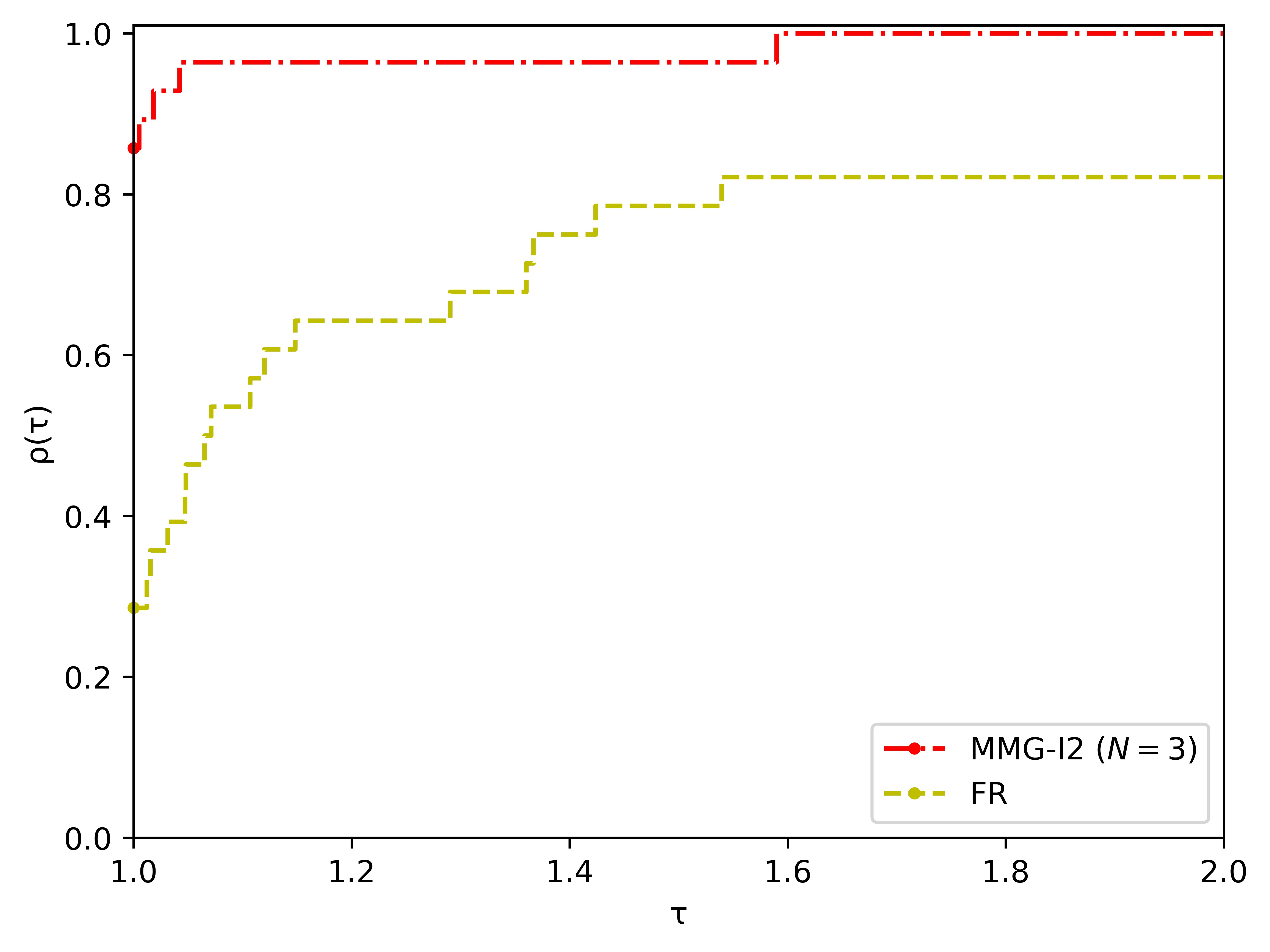}}
	\subfigure{
		\label{sub.7}
		\includegraphics[width=0.3\textwidth]{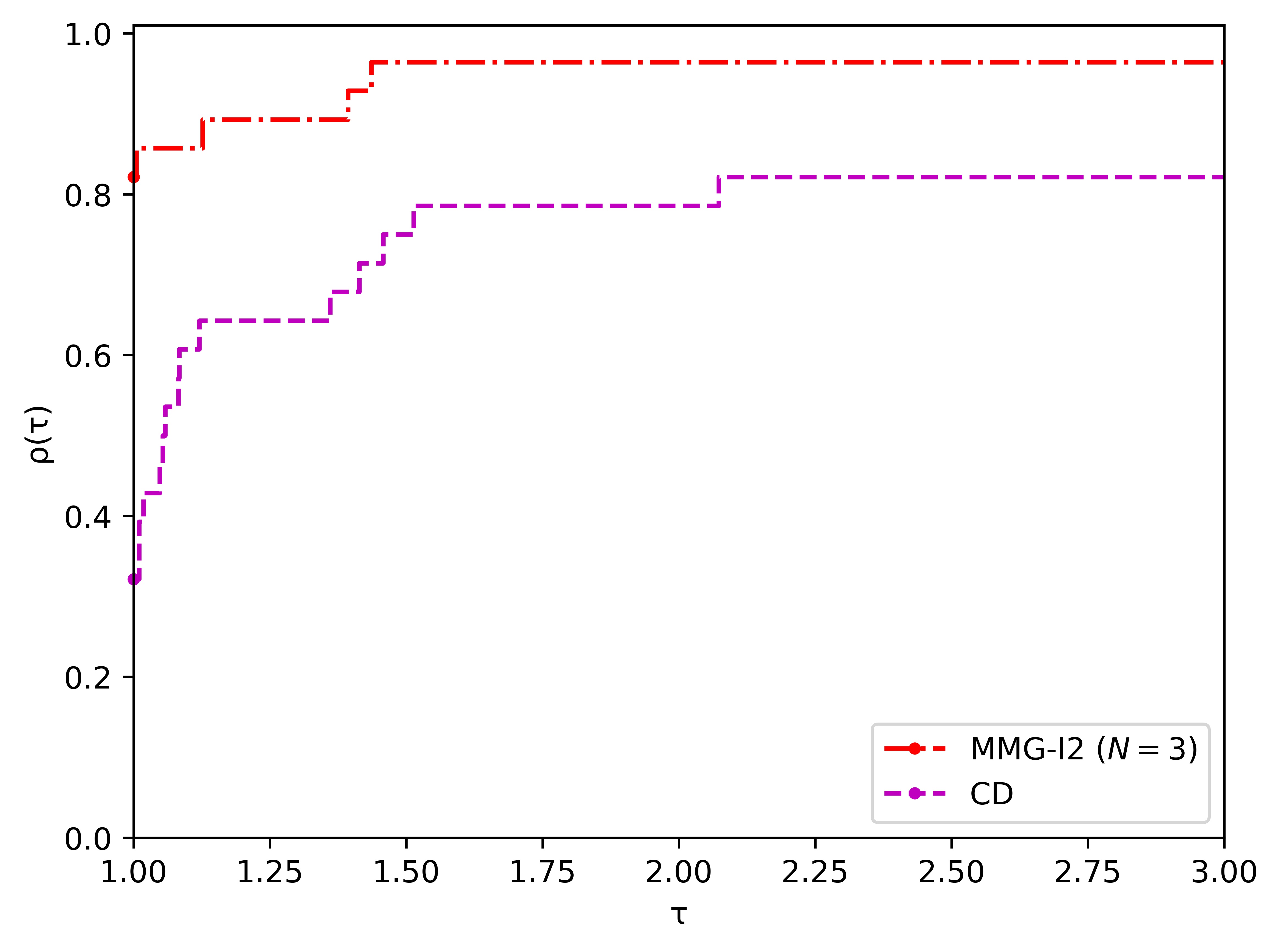}}
	\subfigure{
		\label{sub.8}
		\includegraphics[width=0.3\textwidth]{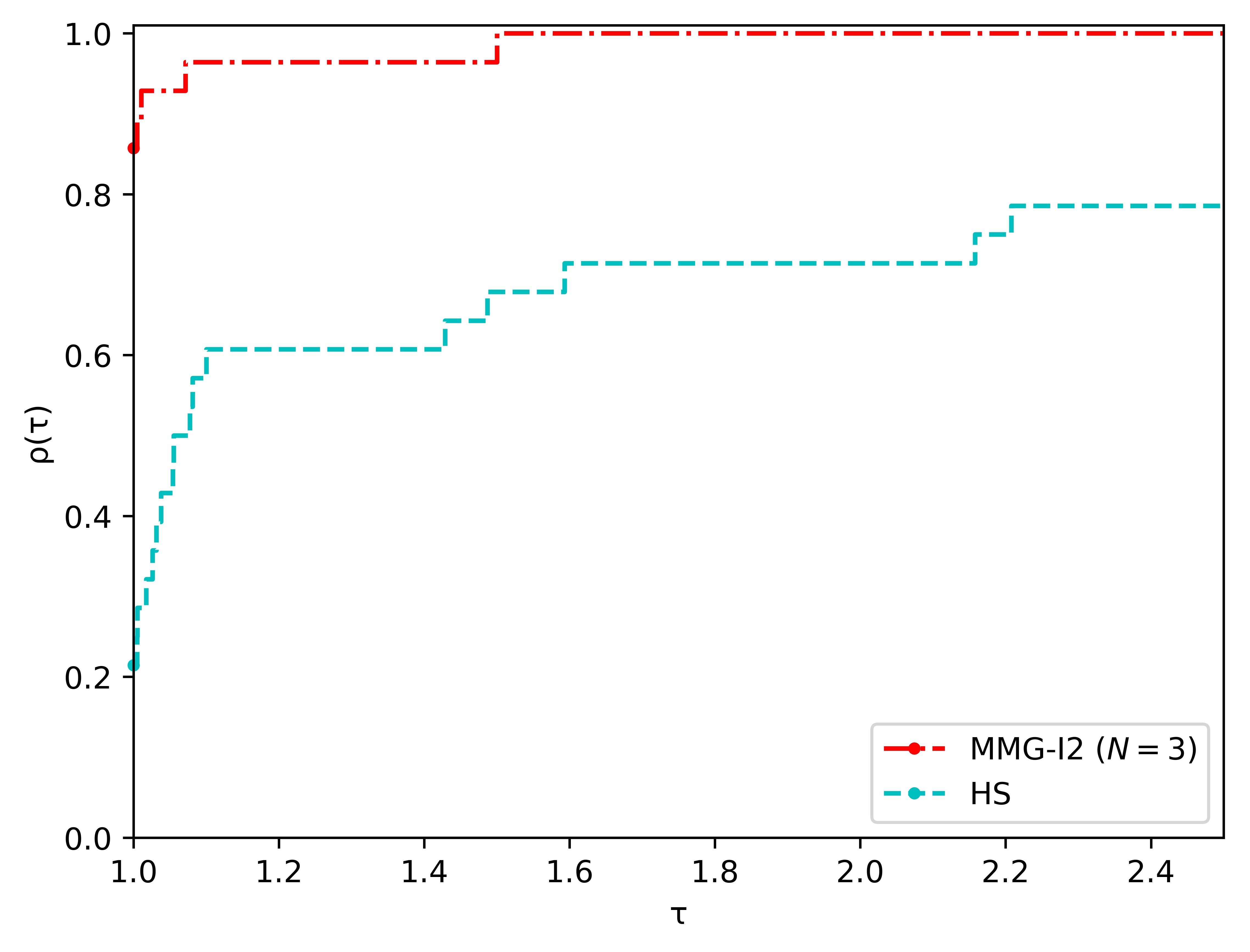}}
	\subfigure{
		\label{sub.9}
		\includegraphics[width=0.3\textwidth]{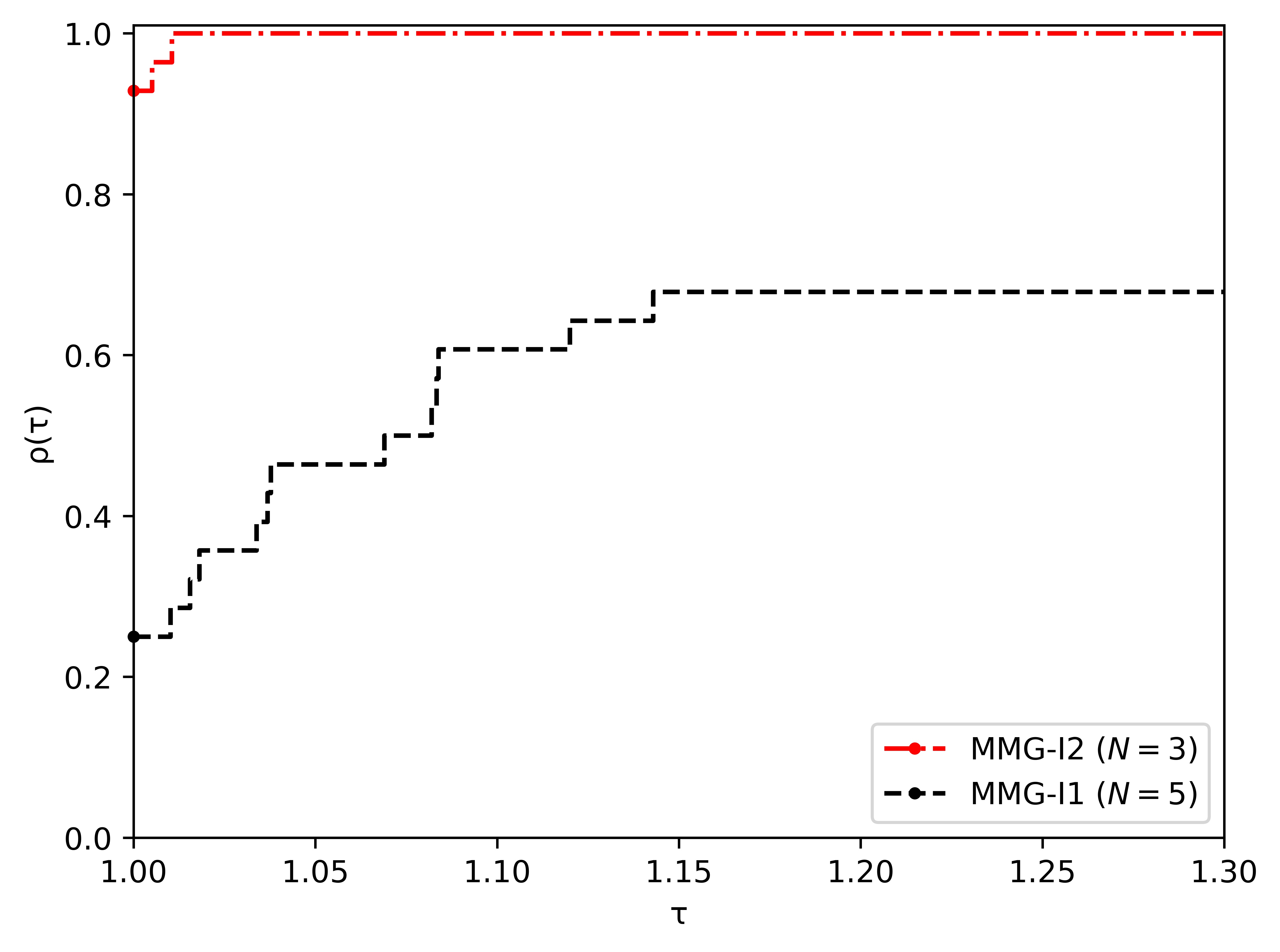}}
	\caption{Performance profile using the purity metric.}
	\label{purity}
\end{figure}

The values of the spacing metric are calculated and then are listed in Table \ref{spacing}. We highlight the best results for every problem with gray background. From Table \ref{spacing}, it follows that our method is slightly better than the others.

\begin{table}[H]\scriptsize
	\centering
	\setlength{\abovecaptionskip}{0cm}  
	\setlength{\belowcaptionskip}{-0.2cm} 
	\caption{The values of spacing metric of algorithms on each problems.}
	\begin{tabular}{lllllll}
		\hline
		\rowcolor{gray!20}& SD          & FR                                & CD                                & HS                                & MMG-I1 ($N=5$)                    & MMG-I2 ($N=3$)                    \\ \hline
		AP3               & 4.6466E+05  & 6.3104E+05                        & 4.8388E+05                        & 4.6525E+05                        & 5.3160E+05                        & \hl{3.2981E+05}  \\
		SK2               & 3.6158E-01  & 3.6789E-01                        & 3.6648E-01                        & 3.6158E-01                        & 3.9351E-01                        & \hl{2.5684E-01}  \\
		DD1$^{*}$         & 2.47076E+01 & 2.63280E+01                       & 2.44810E+01                       & 2.42144E+01                       & 2.38356E+01                       & \hl{1.61891E+01} \\
		DGO1              & 6.13259E-03 & 3.88022E-03                       & 3.79481E-03                       & 6.13259E-03                       & 3.64314E-03                       & \hl{3.53854E-03} \\
		DGO2              & 3.32536E-04 & 5.58581E-04                       & 3.61553E-04                       & 3.32536E-04                       & 3.67898E-04                       & \hl{1.20249E-04} \\
		TOI4$^{\dagger}$  & 3.93543E-03 & 8.77754E-03                       & 4.63074E-03                       & 6.37793E-03                       & \hl{3.18078E-03} & 3.85798E-03                       \\
		Far1              & 3.78246E-02 & \hl{1.08560E-02} & 2.84416E-02                       & 2.33181E-02                       & 4.42181E-02                       & 2.58139E-02                       \\
		BK1               & 5.67094E-03 & 5.72587E-03                       & 5.75176E-03                       & \hl{5.66572E-03}                       & 5.75848E-03                       & {5.74988E-03} \\
		LE1               & 3.86090E-02 & 4.43753E-02                       & 3.87667E-02                       & 4.65641E-02                       & 3.56940E-02                       & \hl{2.55921E-02} \\
		SLC2              & 3.97389E-04 & 4.05604E-04                       & \hl{3.90362E-04} & 3.97389E-04                       & 9.76401E-02                       & 1.40434E-03                       \\
		SD                & 6.38914E-03 & 6.38943E-03                       & 6.39005E-03                       & \hl{6.38914E-03} & 1.63398E-01                       & 1.65357E-02                       \\
		MOP2              & 4.32014E-02 & 4.73312E-02                       & 4.84541E-02                       & \hl{4.32014E-02} & 7.98203E-02                       & 8.00155E-02                       \\
		MOP3              & 2.15315E-01 & 2.15313E-01                       & 2.15315E-01                       & 2.15315E-01                       & 2.15315E-01                       & \hl{2.59174E-02} \\
		PNR               & 2.48868E-01 & 4.21765E+02                       & 2.20989E+00                       & 3.04759E-01                       & \hl{2.21242E-01} & 2.98312E-01                       \\
		VU1               & 8.51215E-03 & 9.86311E-03                       & \hl{4.35712E-03} & 8.22291E-03                       & 5.10587E-03                       & 6.02374E-03                       \\
		KW2               & 5.24998E+00 & 5.22418E+00                       & 5.28243E+00                       & 5.25269E+00                       & \hl{5.17345E+00} & 5.83021E+00                       \\
		MMR1$^{\star}$    & 4.77164E-01 & 4.91246E-01                       & 4.88122E-01                       & 4.92740E-01                       & 3.07840E-01                       & \hl{2.42224E-01} \\
		MMR3              & 5.83910E-03 & 6.08693E-03                       & \hl{5.74151E-03} & 5.82880E-03                       & 5.82848E-03                       & 5.78940E-03                       \\
		Lov3              & 2.36738E-02 & 1.82089E-02                       & 3.73455E-02                       & 6.05556E+01                       & 1.80085E-02                       & \hl{1.47742E-02} \\
		Lov4              & 6.64785E-03 & 6.64988E-03                       & 6.64882E-03                       & 6.64785E-03                       & \hl{6.64784E-03} & 6.64861E-03                       \\
		Lov6              & 1.68441E-02 & \hl{1.33253E-02} & 1.61420E-02                       & 1.61084E-02                       & 1.58396E-02                       & 1.63072E-02                       \\
		FF1               & 3.09501E-01 & 3.10884E-01                       & 3.48504E-01                       & 3.78128E-01                       & \hl{2.89823E-01} & 3.19258E-01                       \\
		FF1$^{\ddagger}$a & 2.91865E-02 & 2.91872E-02                       & \hl{2.91837E-02} & 2.91884E-02                       & 2.91865E-02                       & 2.91867E-02                       \\
		FF1$^{\ddagger}$b & 7.87607E-02 & 7.83368E-02                       & 5.31460E+03                       & 7.87607E-02                       & 7.87281E-02                       & \hl{7.74009E-02} \\
		JOS1a             & 2.18780E+00 & 5.30452E+00                       & 2.31023E+00                       & 2.13582E+00                       & \hl{2.09054E+00} & 2.69760E+00                       \\
		JOS1b             & 9.49827E-01 & 8.96478E-01                       & 1.20768E+00                       & 9.49827E-01                       & 8.78567E-01                       & \hl{3.25366E-01} \\
		JOS1c             & 2.54934E-01 & 2.49510E-01                       & \hl{2.36566E-01} & 2.63231E-01                       & 3.56407E-01                       & 6.18411E-01                       \\
		JOS1d             & 1.20539E-01 & 2.01532E-01                       & 4.05041E-01                       & 1.19843E-01                       & 1.17921E-01                       & \hl{1.14743E-01} \\ \hline
	\end{tabular}
	\label{spacing}
\end{table}

\section{Conclusions}
In this work, we have proposed a new descent method for solving unconstrained MOPs, which employs the past multi-step iterative information at every iteration. The developed search direction with suitable parameters in the proposed method has the sufficient descent property. Under mild assumptions, we derive the global convergence and convergence rates for our method. Numerical results are presented to demonstrate the efficiency of our method.

From the numerical experiments, it is worth mentioning that our method depends on the parameters $N$, $\gamma_{k}$ and $\psi_{kj}$, which directly determine the importance coefficient of previous information. How to select these parameters in our method deserves further study. As presented in Section 6.2, the stepsize strategy seems to be an additional factor in our method. Recently, some stepsize strategies, including Goldstein \cite{W_e2019}, Wolfe \cite{L_n2018}, nonmonotone \cite{M_n2019} line searches, have been proposed in multiobjective optimization. It would be interesting to study our method with these stepsize strategies in the future.

\section*{References}


\begin{thebibliography}{99}\setlength{\itemsep}{-0.1ex}
	\footnotesize
	\bibitem{T_a2004} M. Tavana, A subjective assessment of alternative mission architectures for the human exploration of Mars at NASA using multicriteria decision making, Comput. Oper. Res. 31 (7) (2004) 1147--1164.
	\bibitem{M_s2004} R.T. Marler,  J.S. Arora, Survey of multi-objective optimization methods for engineering, Struct. Multidiscipl. Optim. {26} (6)  (2004) 369--395.
	\bibitem{Z_m2015} C. Zopounidis, E. Galariotis, M. Doumpos, S. Sarri, K. Andriosopoulos, Multiple criteria decision aiding for finance: An updated bibliographic survey, Eur. J. Oper. Res. {247} (2) (2015) 339--348.
	\bibitem{F_o2001}  J. Fliege, OLAF-a general modeling system to evaluate and optimize the location of an air polluting facility, OR-Spektrum. {23} (1) (2001) 117--136.
	\bibitem{H_a2020} A. Hasani, S.M.H. Hosseini, A bi-objective flexible flow shop scheduling problem with machine-dependent processing stages: Trade-off between production costs and energy consumption, Appl. Math. Comput. 386  (2020) 125533.


	\bibitem{J_m2006} Y.C. Jin, Multi-Objective Machine Learning, Berlin: Springer-Verlag, 2006.
	\bibitem{C_e2007} C.A.C. Coello, G.B. Lamont, D. A. Van Veldhuizen, Evolutionary Algorithms for Solving Multi-Objective Problems. Springer, 2007.
	\bibitem{G_m2008} M.G. Gong, L.C. Jiao, H.F. Du, L.F. Bo, Multiobjective immune algorithm with nondominated neighbor-based selection, Evol. Comput. {16} (2) (2008) 225--255 
	
	\bibitem{M_n2000} K. Miettinen, Nonlinear Multiobjective Optimization, Boston, MAA: Kluwer, 2000.
	\bibitem{E_a2008} G. Eichfelder, Adaptive Scalarization Methods in Multiobjective Optimization, Springer, Berlin, 2008

	\bibitem{F_n2009} J. Fliege, L.M. Gra$\tilde{{\rm n}}$a Drummond, B.F. Svaiter, Newton's method for multiobjective optimization, SIAM J. Optim. {20} (2)  (2009) 602--626.
	\bibitem{F_s2000} J. Fliege, B.F. Svaiter, Steepest descent methods for multicriteria optimization, Math, Methods Oper. Res. {51} (3) (2000) 479--494.
	
	\bibitem{W_e2019} J.H. Wang, Y.H. Hu, C.K.W. Yu, C. Li, X.Q. Yang, Extended Newton methods for multiobjective optimization: Majorizing function technique and convergence analysis, SIAM J. Optim. {29} (3) (2019) 2388--2421.
	\bibitem{C_c2009} Z. Chen, C.H. Xiang, K.Q. Zhao, X.W. Liu, Convergence analysis of Tikhonov-type regularization algorithms for multiobjective optimization problems, Appl. Math. Comput. 211 (1) 2009 167--172.
	
	\bibitem{da_a2013} J.X. Da Cruz Neto, G.J.P. Da Silva, O.P. Ferreira, J.O. Lopes, A subgradient method for multiobjective optimization, Comput. Optim. Appl. {54} (3) (2013) 461--472.
	
	\bibitem{A_c2021} P.B. Assun\c{c}\~{a}o, O.P. Ferreira, L.F. Prudente, Conditional gradient method for multiobjective optimization, Comput. Optim. Appl. {78} (3) (2021) 741--768.
	
	\bibitem{M_b2016} V. Morovati, L. Pourkarimi, H. Basirzadeh, Barzilai and Borwein’s method for multiobjective optimization problems, Numer. Algo. {72} (3) (2016) 539--604.
	
	\bibitem{A_a2015} M.A. Ansary, G. Panda, A modified quasi-Newton method for vector optimization problem, Optimization. {64} (11) (2015) 2289--2306.
	\bibitem{M_n2019} K. Mita, E.H. Fukuda, N. Yamashita, Nonmonotone line searches for unconstrained multiobjective optimization problems, J. Global Optim. {75}(1) (2019) 63--90.
	
	\bibitem{L_n2018} L.R. Lucambio P\'{e}rez, L.F. Prudente, Nonlinear conjugate gradient methods for vector optimization, SIAM J. Optim. {28} (3) (2018) 2690--2720.
	\bibitem{G_o2020} M.L.N. Gonçalves, L.F. Prudente, On the extension of the Hager--Zhang conjugate gradient method for vector optimization, Comput. Optim. Appl. {76} (3) (2020) 889--916.
	
	\bibitem{G_a2021} M.L.N. Gon\c{o}alves, F.S. Lima, L.F. Prudente, A study of Liu-Storey conjugate gradient methods for vector optimization, Appl. Math. Comput. {425}, (2022) 127099.
	
	\bibitem{M_q2018} V. Morovati, H. Basirzadeh, L. Pourkarimi, Quasi-Newton methods for multiobjective optimization problems, 4OR-Q. J. Oper. Res. {16} (3) (2018) 261--294.
	\bibitem{F_c2019} J. Fliege, A.I.F. Vaz, L.N. Vicente, Complexity of gradient descent for multiobjective optimization, Optim. Methods Softw. {34} (5) (2019) 949--959.

	
	\bibitem{S_op2006} W.Y. Sun, Y.X. Yuan, Optimization Theory and Methods: Nonlinear Programming, Springer Science Business Media, 2006.
	
	\bibitem{P_s1964} B.T. Polyak, Some methods of speeding up the convergence of iteration methods, USSR Comput. Math. Math. Phys. {4} (5) (1964) 1--17.
	
	\bibitem{S_o2013} I. Sutskever, J. Martens, G. Dahl, G. Hinton, On the importance of initialization and momentum in deep learning, In: 30th International Conference on Machine Learning, PMLR, 2013, pp. 1139--1147.

	\bibitem{L_a2020} W. Liu, L. Chen, Y. Chen, W. Zhang, Accelerating federated learning via momentum gradient descent, IEEE Trans. Para. Dist. Syst. {31} (8) (2020) 1754--1766.

	
	\bibitem{C_s1969} E.E. Cragg, A.V. Levy, Study on a supermemory gradient method for the minimization of functions, J. Optim. Theory Appl. {4} (3) (1969) 191--205 .
	
	\bibitem{W_s1976} M.A. Wolfe, C. Viazminsky, Supermemory descent methods for unconstrained minimization, J. Optim. Theory Appl. {18} (4)  (1976) 455--468.
	
	\bibitem{S_a2004} Z.J. Shi, J. Shen, A gradient-related algorithm with inexact line searches, J. Comput. Appl. Math. {170} (2) (2004) 349--370.
	\bibitem{S_a2005} Z.J. Shi, J. Shen, A new super-memory gradient method with curve search rule, Appl. Math. Comput. 170 (1) (2005) 1--16.
	
	\bibitem{N_g2006} Y. Narushima, H. Yabe, Global convergence of a memory gradient method for unconstrained optimization, Comput. Optim. Appl. {35} (3) (2006) 325--346.

	\bibitem{Z_a2012} Y. Zheng, Z.P. Wan, A new variant of the memory gradient method for unconstrained optimization, Optim. Lett. {6} (8) (2012) 1643--1655.
	
	\bibitem{G_a2011} E. Ghadimi, M. Johansson, A memory gradient method based on the nonmonotone technique Shames, Accelerated gradient methods for networked optimization, in Proceedings of the 2011 American Control Conference. IEEE, pp. 1668--1673.
	\bibitem{G_a2014} Y.G. Ou, Y.W. Liu, A nonmonotone supermemory gradient algorithm for unconstrained optimization, J.  Appl. Math. Comput. {46} (1)  (2014) 215--235.
	\bibitem{O_a2017} Y.G. Ou, Y.W. Liu, A memory gradient method based on the nonmonotone technique, J. Indust. Manag. Optim. {13} (2) (2017) 857--872.
	
	
	\bibitem{F_a2014} E.H. Fukuda, L.M. Gra$\tilde{{\rm n}}$a Drummond, A survey on multiobjective descent methods, Pesquisa Oper. {34} (3) (2014) 585--620.
	
	\bibitem{G_g2021} M.L.N. Gon\c{c}alves, F.S. Lima, L.F. Prudente, 2021. Globally convergent Newton-type methods for multiobjective optimization. \url{http://www.optimization-online.org/DB_FILE/2020/08/7955.pdf}.
	\bibitem{H_a2006} S. Huband, P. Hingston, L. Barone, L. While, A review of multiobjective test problems and a scalable test problem toolkit, IEEE Trans. Evol. Comput. {10} (5) (2006) 477--506.
	\bibitem{D_n1998} I. Das, J.E. Dennis, Normal-boundary intersection: A new method for generating the Pareto surface in nonlinear multicriteria optimization problems, SIAM J. Optim. {8} (3) (1998) 631--657 
	
	\bibitem{T_t1983} P.L. Toint, Test problems for partially separable optimization and results for the routine pspmin, The University of Namur, Department of Mathematics, Belgium, technical report, 1983.
	
	\bibitem{S_t2011} O. Sch${\rm \ddot{o}}$tze, A. Lara, C.A.C. Coello, The directed search method for unconstrained multi-objective optimization problems. Technical report TR-OS-2010-01, \url{http://delta.cs.cinvestav.mx/schuetze/technical reports/TR-OS-2010-01.pdf.gz}.
	
	\bibitem{P_p2006} M. Preuss, B. Naujoks, G. Rudolph, Pareto set and EMOA behavior for simple multimodal multiobjective functions, In: Runarsson, T. P., Beyer, H.-G., Burke, E., Merelo-Guervós, J. J., Whitley, L.D., Yao, X (Eds) Parallel Problem Solving from Nature---PPSN IX, pp. 513--522. Springer, Berlin, 2006.
	\bibitem{S_m1993} W. Stadler, J. Dauer, Multicriteria optimization in engineering: A tutorial and survey. Progr. Astronaut. Aero. {150}, (1993) 209--209.
	\bibitem{K_a2005} I. Kim, O. de Weck, Adaptive weighted-sum method for bi-objective optimization: Pareto front generation, Struct. Multidiscip. Optim. {29} (2) (2005) 149--158.
	\bibitem{M_b2008} E. Miglierina, E. Molho, M. Recchioni, Box-constrained multi-objective optimization: A gradientlike method without a priori scalarization, Eur. J. Oper. Res. {188} (3) (2008) 662--682.
	
	\bibitem{L_s2011} A. Lovison, Singular continuation: Generating piecewise linear approximations to Pareto sets via global analysis, SIAM J. Optim. {21} (2) (2011) 463--490.
	\bibitem{J_d2001} Y.C. Jin, M. Olhofer, B. Sendhof, Dynamic weighted aggregation for evolutionary multi-objective optimization: Why does it work and how? In: Proceedings of the 3rd Annual Conference on Genetic and Evolutionary Computation, GECCO01, San Francisco, CA, USA, pp. 1042--1049. Morgan Kaufmann Publishers Inc, 2001.
	
	\bibitem{E_a2021} G. Eichfelder, L. Warnow, An approximation algorithm for multi-objective optimization problems using a box-coverage, J. Global Optim. 83 (2021)  329--357.

	\bibitem{D_b2002} E.D. Dolan, J.J. Mor\'{e}, Benchmarking optimization software with performance profiles, Math. Program. {91} (2) 201--213 (2002)
	\bibitem{C_d2011} A.L. Cust\'{o}dio, J.A. Madeira, A.I.F. Vaz, L.N. Vicente, Direct multisearch for multiobjective optimization, SIAM J. Optim. {21} (3) (2011) 1109--1140.

	\bibitem{S_f1995} J.R. Schott, Fault tolerant design using single and multicriteria genetic algorithm optimization, Master's thesis, Department of Aeronautics and Astronautics, Massachusetts Institute of Technology, 1995.
	
	
\end{thebibliography}
\end{document}